\DeclareFontFamily{U}{min}{}
\DeclareFontShape{U}{min}{m}{n}{<-> udmj30}{}
\definecolor{lavender}{rgb}{0.59, 0.44, 0.84}
\definecolor{redish}{rgb}{0.50, 0, 0}
\definecolor{bluish}{rgb}{0, 0, 0.50}
\renewcommand{\sf}[1]{\mathsf{#1}}
\renewcommand{\c}[1]{\mathcal{#1}}
\renewcommand{\b}[1]{\mathbb{#1}}
\newcommand{\un}[1]{\underline{#1}}
\newcommand{\ec}[1]{\underline{\c{#1}}}
\newcommand{\Fun}{\sf{Fun}}
\newcommand{\exact}[1]{(\c{#1},\c{#1}^{\ingr},\c{#1}^{\egr})}
\newcommand{\cC}{\c{C}}
\newcommand{\cD}{\c{D}}
\newcommand{\cS}{\c{S}}
\newcommand{\sD}{\sf{D}}
\newcommand{\bK}{\b{K}}
\newcommand{\eC}{\ec{C}}
\DeclareMathOperator{\Hom}{Hom}
\renewcommand{\lim}{\sf{lim}}
\newcommand{\Cat}{\sf{Cat}}
\DeclareMathOperator{\Set}{\sf{Set}}
\DeclareMathOperator{\lex}{\sf{lex}}
\DeclareMathOperator{\LMod}{LMod}
\DeclareMathOperator{\Span}{\sf{Span}}
\newcommand{\del}{\partial}
\newcommand{\Map}{\sf{Map}}
\newcommand{\Alg}{\sf{Alg}}
\newcommand{\Seg}{\sf{Seg}}
\newcommand{\RMod}{\sf{RMod}}
\newcommand{\BMod}{\sf{BMod}}
\newcommand{\Tw}{\sf{Tw}}
\newcommand{\TSegSpan}{\operatorname{2Seg_{\Delta}^{\leftrightarrow}}}
\newcommand{\rel}{\sf{rel}}
\newcommand{\rev}{\operatorname{rev}}
\def\triangle[#1]#2[#3]#4[#5]#6{
        \begin{tikzpicture}[auto]
                \node (A) at (0,0) {$#1$};
                \node (B) at (4,0) {$#2$};
                \node (C) at (2,-1) {$#3$};
               
                \draw[->] (A) to node[scale=0.7] {$#4$} (B);
                \draw[->] (A) to node[scale=0.7, anchor=north east] {$#5$} (C);
                \draw[->] (B) to node[scale=0.7] {$#6$} (C);

        \end{tikzpicture}}
\def\comspan[#1]#2[#3]#4[#5]#6[#7]#8[#9]{
\begin{tikzpicture}
        \node (A) at (0,0) {$#1$};
        \node (B) at (2,-1) {$#5$};
        \node (C) at (-2,-1) {$#3$};
        \node (D) at (4,-2) {$#7$};
        \node (E) at (0,-2) {$#8$};
        \node (F) at (-4,-2) {$#9$};
       
        \draw[-to] (A) --   (B);
        \draw[-to] (A) --  (C);
        \draw[-to] (B) --  (D);
        \draw[-to] (B) -- node[anchor = south east, scale = 0.7] {$#2$} (E);
        \draw[-to] (C) -- node[anchor = south west, scale = 0.7] {$#4$} (E);
        \draw[-to] (C) -- node[anchor = south east, scale = 0.7] {$#6$} (F);
       
        \draw[draw=none] (-.5*\textwidth,0) -- (.5*\textwidth,0); 
    \end{tikzpicture}}
\def\spantikz[#1]#2[#3]#4[#5]{
        \begin{tikzpicture}
                \node (A) at (0,0) {$#1$};
                \node (B) at (2,-2) {$#5$};
                \node (C) at (-2,-2) {$#3$};
               
                \draw[-to] (A) -- node[anchor = south west, scale = 0.7] {$#2$} (B);
                \draw[-to] (A) -- node[anchor = south east, scale = 0.7] {$#4$} (C);
               
                \draw[draw=none] (-.5*\textwidth,0) -- (.5*\textwidth,0); 
out this line to stop centering around the middle node
        \end{tikzpicture}}
\def\widespantikz[#1]#2[#3]#4[#5]{
        \begin{tikzpicture}
                \node (A) at (0,0) {$#1$};
                \node (B) at (3,-2) {$#5$};
                \node (C) at (-3,-2) {$#3$};
               
                \draw[-to] (A) -- node[anchor = south west, scale = 0.7] {$#2$} (B);
                \draw[-to] (A) -- node[anchor = south east, scale = 0.7] {$#4$} (C);
               
                \draw[draw=none] (-.5*\textwidth,0) -- (.5*\textwidth,0); 
out this line to stop centering around the middle node
        \end{tikzpicture}}
\def\doublewedge[#1]#2[#3]#4[#5]#6[#7]#8[#9]{
 \begin{tikzpicture}
                \node (A) at (0,0) {$#1$};
                \node (B) at (3,-2) {$#5$};
                \node (C) at (-3,-2) {$#3$};
                \node (D) at (-6,0) {$#7$};
                \node (E) at (6,0) {$#9$};

                \draw[-to] (A) -- node[anchor = south west, scale = 0.7] {$#2$} (B);
                \draw[-to] (A) -- node[anchor = south east, scale = 0.7] {$#4$} (C);
                \draw[-to] (D) -- node[anchor = south west, scale = 0.7] {$#6$} (C);
                \draw[-to] (E) -- node[anchor = south east, scale = 0.7] {$#8$} (B);
               
                \draw[draw=none] (-.5*\textwidth,0) -- (.5*\textwidth,0); 
out this line to stop centering around the middle node
\end{tikzpicture}
}
\def\squarevert[#1]#2[#3]#4{
        \begin{tikzpicture}
                \node (A) at (0,0) {$#1$};
                \node (B) at (4,-1.5) {$#2$};
                \node (C) at (-4,-1.5) {$#3$};
                \node (D) at (0,-3) {$#4$};
               
                \draw[-to] (A) -- (B);
                \draw[-to] (A) -- (C);
                \draw[-to] (B) --  (D);
                \draw[-to](C) --  (D);
               
                \draw[draw=none] (-.5*\textwidth,0) -- (.5*\textwidth,0); 
out this line to stop centering around the middle node
        \end{tikzpicture}}
\DeclareMathOperator{\op}{op}
\DeclareMathOperator{\id}{id}
\DeclareMathOperator{\tri}{\triangleright}
\DeclareMathOperator{\tle}{\triangleleft}
\DeclareMathOperator{\ev}{ev}
\DeclareMathOperator{\ingr}{in}
\DeclareMathOperator{\egr}{eg}
\DeclareMathOperator{\act}{act}
\DeclareMathOperator{\inert}{int}
\newcommand{\dspace}{birelative decomposition space}
\DeclareMathOperator{\bim}{bim}
\newtheorem{theorem}{Theorem}[section]
\newtheorem*{theorem*}{Theorem}
\newtheorem*{question*}{Question}
\newtheorem{prop}[theorem]{Proposition}
\newtheorem{lemma}[theorem]{Lemma}
\newtheorem{cor}[theorem]{Corollary}
\theoremstyle{definition}
\newtheorem{definition}[theorem]{Definition}
\newtheorem{remark}[theorem]{Remark}
\newtheorem{example}[theorem]{Example}
\newtheorem*{claim*}{Claim}
\newtheorem{construction}[theorem]{Construction}
\newtheorem{notation}[theorem]{Notation}
\newcommand{\tsegspan}[1]{\operatorname{2-Seg_{\Delta}^{\leftrightarrow}(#1)}}
\newcommand{\bisegspan}[1]{\operatorname{BiSeg^{\leftrightarrow}_{\Delta}(#1)}}
\begin{document}

\title{An $\infty$-Category of 2-Segal Spaces}
\author{Jonte Gödicke\footnote{Max-Planck Institut f\"{u}r Mathematik, Vivatgasse 7, 53111 Bonn, Germany, email:math@jonte-goedicke.com or godicke@mpim-bonn.mpg.de}}

\maketitle

\begin{abstract}
   \noindent Algebra objects in $\infty$-categories of spans admit a description in terms of $2$-Segal objects. We introduce a notion of span between $2$-Segal objects and extend this correspondence to an equivalence of $\infty$-categories. Additionally, for every $\infty$-category with finite limits $\c{C}$, we introduce a notion of a birelative $2$-Segal object in $\c{C}$ and establish a similar equivalence with the $\infty$-category of bimodule objects in spans. Examples of these concepts arise from algebraic and hermitian K-theory through the corresponding Waldhausen $S_{\bullet}$-construction. Apart from their categorical relevance, these concepts can be used to construct homotopy coherent representations of Hall algebras.
\end{abstract}

\tableofcontents

\section{Introduction}

A $2$-Segal space, also known as a decomposition space \cite{galvez2018decomposition}, is a simplicial space $X_{\bullet}:\Delta^{\op}\rightarrow \cS$ that satisfies a $2$-dimensional analog of the famous Segal conditions introduced in \cite{segal}. These concepts were initially introduced independently by Dyckerhoff and Kapranov \cite{dyckerhoff2019higher} to describe the underlying structure of different Hall algebra constructions, and by Gálvez-Carrillo, Kock and Tonks to describe various coalgebra structures appearing in combinatorics \cite{galvez2018decomposition}. Since then, the subject has undergone substantial developments, including the introduction of even higher Segal conditions \cite{poguntke2017higher} to applications in different areas of mathematics like the theory of TFTs \cite{dyckerhoff2018triangulated}, Auslander-Reiten theory \cite{dyckerhoffausreiten} and Donaldson-Thomas theory \cite{porta}. \\
The connection between $2$-Segal spaces and Hall algebras arises from the algebraic interpretation of $2$-Segal spaces. In \cite{stern20212}, it is shown that the space of $2$-Segal spaces is equivalent to the space of algebra objects in the $\infty$-category of spans of spaces. However, the appropriate notion of morphism between $2$-Segal spaces, which under this equivalence corresponds to a morphism of algebras, has not been determined so far. \\
An initial proposal might be to define the corresponding notion of morphism between $2$-Segal spaces as a morphism of simplicial spaces and thus to define the $\infty$-category of $2$-Segal spaces as a full subcategory of $\Fun(\Delta^{\op},\cS)$. Although this yields an interesting notion of morphism between $2$-Segal spaces, it does not correspond to an algebra morphism. This fails for different reasons. On the one hand, a general morphism of simplicial spaces is only expected to correspond to a lax algebra morphism \cite[Sect.4.2]{walde2016hall}. On the other hand, since the algebraic structure itself lives in an $\infty$-category of spans an algebra morphism should also be formalized in this context. These considerations lead to the main result of this text:

\begin{theorem}\label{thm:one}
Let $\c{C}$ be an $\infty$-category with finite limits. There exists an equivalence of $\infty$-categories
    \[ 
    \tsegspan{\c{C}} \xrightarrow{\simeq} \Alg(\Span(\c{C}^{\times}))
    \]
    between the subcategory of the $\infty$-category $\Span(\Fun(\Delta^{\op},\c{C}))$ of spans of simplicial objects with objects $2$-Segal object and morphisms $2$-Segal spans and the $\infty$-category of algebra objects in $\Span(\c{C}^{\times})$.
\end{theorem}
\noindent A span between $2$-Segal objects is called a $2$-Segal span if the legs of the span satisfy conditions that were previously introduced under the names CULF and IKEO in \cite{galvez2018decomposition}. There, the authors show that such morphisms induce algebra morphisms. The novel aspect of Theorem~\ref{thm:one} is that every algebra morphism is in fact induced by spans of CULF and IKEO morphisms. \\
Analogous to the case of algebras, we introduce a multicolored version of the $2$-Segal conditions on birelative simplicial spaces $X_{\bullet}:\Delta^{op}_{/[1]} \rightarrow \cS$. Using this, we extend the previous result to the case of bimodules:
\begin{theorem}
    Let $\c{C}$ be an $\infty$-category with finite limits. There exists an equivalence of $\infty$-categories
    \[ 
    \bisegspan{\c{C}} \xrightarrow{\simeq} \BMod(\Span(\c{C}^{\times}))
    \]
    between the subcategory of $\Span(\Fun(\Delta^{\op}_{/[1]},\c{C}))$ with objects birelative $2$-Segal objects and with morphism birelative $2$-Segal spans and the $\infty$-category of bimodule objects in $\Span(\c{C}^{\times})$.
\end{theorem}
\noindent  Similar conditions have been previously studied for left modules in the context of $1$-categories in both \cite{walde2016hall} and \cite{young2018relative}, and for bimodules in terms of double Segal spaces in \cite{carlier2020incidence}. Our result generalizes these works into the context of $\infty$-categories.  \\
The prototypical example of a $2$-Segal space is the Waldhausen $S_{\bullet}$-construction of an exact $\infty$-category $\c{C}$. It is thus reasonable to expect exact $\infty$-functors $F:\c{C} \rightarrow \c{D}$ to induce algebra morphism between those. Indeed, we show that certain classes of exact $\infty$-functors induce algebra morphisms between Waldhausen $S_{\bullet}$-constructions. Other examples arise from the relative Waldhausen $S_{\bullet}$-construction of an exact $\infty$-functor $F$. This simplicial space fits into a sequence of simplicial spaces: 
\begin{equation}\label{eq:relativesequ}
\c{D}_{\bullet}^{\simeq} \xrightarrow{\iota_{\bullet}} S_{\bullet}^{rel}(F) \xrightarrow{\pi_{\bullet}} S_{\bullet}(\c{C})
\end{equation}
which has first appeared in \cite{waldhausen} to construct the long exact sequence of relative K-theory. We show that the morphisms $\iota_{\bullet}$ and $\pi_{\bullet}$ are indeed algebra morphisms. These morphisms underlie the construction of derived Hecke actions on cohomological Hall algebras \cite{kapranov2019cohomological}.   \\
For the construction of examples of modules, we consider $2$-Segal spaces $X_{\bullet}$ equipped with a twisted $\sf{C}_{2}$-action. Examples of these arise in hermitian K-theory as the Waldhausen $S_{\bullet}$-construction of an exact $\infty$-category with duality. For every such $2$-Segal space with a twisted $\sf{C}_{2}$ action, the canonical morphism $\Tw(X_{\bullet})\rightarrow X_{\bullet}\times X_{\bullet}^{rev}$ is a relative $2$-Segal object with $\sf{C}_{2}$-action. Taking homotopy fixed points yields:

\begin{prop}
There exists an $\infty$-functor from the $\infty$-category of $2$-Segal spaces with duality to the $\infty$-category of relative $2$-Segal spaces
\[
\operatorname{Tw^{\leftrightarrow}(-)^{\sf{C}_{2}}}: \operatorname{2-Seg_{\Delta}^{\leftrightarrow,\sf{C}_{2}}(\cS)} \rightarrow \operatorname{Rel2Seg^{\leftrightarrow}_{\Delta}(\cS)}
\]
that maps a simplicial space with duality $X_{\bullet}$ to the relative $2$-Segal space $\Tw(X_{\bullet})^{\sf{C}_{2}}\rightarrow X_{\bullet}$.
\end{prop}

\noindent In the example of the Waldhausen $S_{\bullet}$-construction of an exact $\infty$-category with duality $\c{C}$ the simplicial space $\Tw(S_{\bullet}(\c{C}))^{\sf{C}_{2}}$ is known as the $\c{R}_{\bullet}$-construction \cite{hornbostel2004localization} of $\c{C}$. Consequently, the above Proposition expresses $\c{R}_{\bullet}(\c{C})$ as a module over $S_{\bullet}(\c{C})$. This generalizes results of \cite{young2018relative} for proto-exact categories with duality into the $\infty$-categorical context. Moreover, using the functoriality of this construction, we obtain examples of module morphisms from exact duality preserving functors between exact $\infty$-categories. These results have applications to the construction of cohomological Hall algebra representations \cite{diaconescu2022cohomological} and orientifold Donaldson-Thomas-theory \cite{young2020representations}.

\subsection{Outline}

We conclude this introduction with an outline of the structure of this paper. In Section~\ref{sect:2-seg}, we define the $\infty$-categories $\bisegspan{\cS}$ and $\tsegspan{\cS}$. Therefore, we first recall in Subsection~\ref{subsect:2-Segal} the definition of $2$-Segal objects and introduce the notion of $2$-Segal spans. Then, in Subsection~\ref{subsect:birel2-Seg} we extend the definitions to the case of birelative simplicial objects and introduce the birelative $2$-Segal conditions. Similarly to the $2$-Segal case, we introduce a version of the decomposition space conditions in Subsection~\ref{subsect:decomposition}. We show that these are equivalent to the birelative $2$-Segal conditions. Finally, we relate in Subsection~\ref{subsect:carlier} our notion of birelative $2$-Segal spaces to Carlier's notion of bimodule configurations \cite{carlier2020incidence}.\\
In Section~\ref{sect:examples}, we discuss examples of the notions introduced in Section~\ref{sect:2-seg} that naturally appear in the context of algebraic and hermitian K-theory. To construct examples of modules, we construct an $\infty$-functor from the $\infty$-category of $2$-Segal spaces with duality to the $\infty$-category of relative $2$-Segal objects. As a main application of this, we express the $\c{R}_{\bullet}$-construction as a module over the $S_{\bullet}$-construction.\\
In Section~\ref{sect:bimod} we prove the first half of our main theorem and characterize the space bimodule objects in span-categories in terms of birelative $2$-Segal objects. To increase the level of readability, we have included some combinatorial lemmas into the Appendix~\ref{sect:Appendix}. \\
In Section~\ref{sect:classificatin of module morphisms} we finally prove our main theorem and extend the equivalence of spaces constructed in Section~\ref{sect:bimod} to an equivalence of $\infty$-categories. As a corollary, we obtain similar characterizations for the $\infty$-category of algebra, left and right module objects.

\subsection{Acknowledgments}

I wish to thank my supervisor Tobias Dyckerhoff for his support and encouragement. I further thank Angus Rush for many discussions about spans and Joachim Kock for interesting discussions about the content of this text. Further, I thank Joachim Kock for pointing out the relation to bicomodule configurations. The author is funded by the Deutsche Forschungsgemeinschaft (DFG, German Research Foundation) under Germany's Excellence Strategy - EXC 2121 Quantum Universe'' - 390833306 and the Collaborative Research Center - SFB 1624 Higher structures, moduli spaces and integrability'' - 506632645.

\section{2-Segal Conditions}\label{sect:2-seg}

Throughout this section, $\c{C}$ denotes an $\infty$-category with finite limits. In \cite{dyckerhoff2019higher}, Dyckerhoff and Kapranov introduced a combinatorial description of associative algebra objects in $\infty$-categories of spans in terms of so-called \emph{$2$-Segal objects}. The goal of this section, is to introduce a variety of \textit{$2$-Segal type} conditions generalizing the original $2$-Segal conditions of Dyckerhoff and Kapranov, and to introduce notions of morphisms between them. In Sections~\ref{sect:bimod} adn \ref{sect:classificatin of module morphisms}, we relate these conditions to homotopy coherent algebra in $\infty$-categories of spans. Before stating the formal definitions, let us motivate them for the example of the $2$-Segal condition of \cite{dyckerhoff2019higher}.

There exists an $\infty$-category of spans $\Span(\c{C})$, whose objects are the objects of $\c{C}$ and whose $1$-morphisms are spans
\[
\begin{tikzcd}
    & W\arrow[dr]\arrow[dl] & \\
    X & & Y
\end{tikzcd}
\]
of $1$-morphisms in $\c{C}$. Since it is an $\infty$-category, it also has invertible $n$-morphisms for all $n\geq 2$. For example, an invertible $2$-morphism is given by a diagram in $\cC$ of the form
\[
\begin{tikzcd}
    & W \arrow[dr] \arrow[dl] & \\
    X & Z \arrow[u,"\simeq"] \arrow[d,swap,"\simeq"] & Y \\
    & V\arrow[ur]\arrow[ul] & 
\end{tikzcd}
\]
where the middle arrows are equivalences. We read this diagram as a morphism 
from the upper to the lower span. The $\infty$-category $\Span(\cC)$ further inherits a monoidal structure from the Cartesian product $\times$ on $\cC$. We can, therefore, study homotopy coherent associative algebras in the monoidal $\infty$-category $\Span(\cC)^{\otimes}$. Informally, the datum of such an algebra consists of
\begin{itemize}
    \item an underlying object $X_{1}\in \c{C}$,
    \item a multiplication span
    \[
\begin{tikzcd}
    & X_{2} \arrow[rd,"\del_{1}"] \arrow[ld,swap,"(\del_{0}{,}\del_{2})"] & \\
    X_{1}\times X_{1} & & X_{1} 
\end{tikzcd}
\]
\item a unit span
\[
\begin{tikzcd}
    & X_{0} \arrow[dr,"s_{0}"]\arrow[dl,swap, "p_{X_{0}}"] & \\
    \ast & & X_{1}
\end{tikzcd}
\]
\item and higher morphisms that describe homotopy coherent associativity and unitality.
\end{itemize}
As the notations suggest, this data organizes into a simplicial object $X_{\bullet}:\Delta^{\op}\rightarrow \cC$, whose $n$-simplices, for $n>2$ are encoded in the associativity data. For example, the lowest instance of higher associativity is the existence of an associativity $2$-isomorphism 
\[
\alpha:X_{2}\times_{X_{1}} X_{2} \xRightarrow{\simeq} X_{2} \times_{X_{1}}X_{2}
\]
in $\Span(\cC)$. This data is given by an object $X_{3}\in \cC$, together with $2$ invertible morphisms
\begin{equation}\label{eq:2-Seg}
\begin{tikzcd}
    & X_{3}  \arrow[dr, "(\del_{3}{,}\del_{1})"] \arrow[dl,"(\del_{0}{,}\del_{2})",swap] & \\
    X_{2}\times_{X_{1}}X_{2} & &  X_{2} \times_{X_{1}}X_{2}
\end{tikzcd}
\end{equation}
that fit into a commutative diagram:
\[
\begin{tikzpicture}[auto]
    \node (A) at (0,0) {$X_{1}\times X_{1} \times X_{1}$};
    \node (B) at (6,0) {$X_{1} \times X_{1}$};
    \node (C) at (12,0) {$X_{1}$};
    \node (D) at (3.5,1.6) {$X_{2} \times X_{1}$};
    \node (E) at (10,1.4) {$X_{2}$};
    \node (F) at (8,2) {$X_{2} \times_{X_{1}} X_{2}$};
    \node (G) at (3,1) {$X_{1}\times X_{2}$};
    \node (H) at (9,1) {$X_{2}$};
    \node (I) at (7,1.5) {$X_{2} \times_{X_{1}} X_{2}$};
    \node (J) at (7.3,2.8) {$X_{3}$};
    \node[scale=0.7,red] (L) at (7.3,2.4) {$\alpha$};
    \draw[->] (D) to [bend right=10] node[swap,scale=0.7] {$(\del_{2},\del_{0})\times \id_{X_{1}}$} (A);
    \draw[->,dotted] (D) to [bend left= 15] node[scale=0.7] {$\del_{1}\times \id_{X_{1}}$} (B);
    \draw[->,dotted] (E) to [bend right= 20]  (B);
    \draw[->] (E) [bend left= 10] to node[scale=0.7] {$\del_{1}$} (C);
    \draw[dotted, ->] (F) to [bend right= 10] (D);
    \draw[->] (F) to [bend left= 10] node[scale=0.7] {$\pi_{2}$}(E);
    \draw[->] (G) to [bend right= 10] node[scale=0.7] {$\id_{X_{1}}\times (\del_{2},\del_{0})$} (A);
    \draw[->] (G) to [bend left =10] node[swap,scale=0.7] {$\id_{X_{1}}\times \del_{1}$} (B);
    \draw[->] (H) to [bend right= 10] node[ scale=0.7] {$(\del_{2},\del_{0})$} (B);
    \draw[->] (H) to [bend left = 10] node[swap,scale=0.7] {$\del_{1}$} (C);
    \draw[->] (I) [bend right = 10] to (G);
    \draw[->] (I) to [bend left =10] (H);
    \draw[->] (J) to [bend left = 10] node[scale= 0.7] {$(\del_{3},\del_{1})$}  (F);
    \draw[->] (J) to [bend right= 30] node[swap, scale= 0.7] {$(\del_{0},\del_{2})$}  (I);
    \draw[->,red] (7.7,2.2) ..  controls (7.3,2.8)  and  (7.1,2.8) .. (7.1,1.8);
\end{tikzpicture}
\]
Since the diagram commutes, the components $\del_{i}$ satisfy the simplicial identities
\[
\del_{i}\circ \del_{j} = \del_{j-1} \del_{i} \text{ for } i<j.
\]
The lowest dimensional $2$-Segal condition is the requirement that the morphisms in Equation~\eqref{eq:2-Seg} are invertible (see Definition~\ref{def:2-Seg}) and therefore encodes the lowest dimensional instance of associativity. Analogously, the higher-dimensional $2$-Segal conditions encode the higher associativity of the algebra. 

We now summarize the content of this section. In Section~\ref{subsect:2-Segal}, we introduce the definitions of $2$-Segal objects and $2$-Segal spans. They assemble into an $\infty$-category $\TSegSpan(\c{C})$ that we will show in Corollary~\ref{cor:2seg} to be equivalent to the $\infty$-category $\Alg(\Span(\cC))$ of algebra objects and algebra morphisms in the $\infty$-category $\Span(\cC)^{\otimes}$. Afterward, we extend the discussion in \ref{subsect:birel2-Seg} and introduce $2$-Segal and Segal span conditions for the indexing categories $\Delta^{\op}_{/[1]}$,  $\Delta_{\geq}^{\op}$ and $\Delta^{\op}_{\leq}$ generalizing the ones on $\Delta$. We show in Section~\ref{sect:bimod} that these admit a similar interpretation in terms of bi-, left, and right module objects in the symmetric monoidal $\infty$-category $\Span(\c{C})^{\otimes}$ respectively. Finally, we provide in Subsection~\ref{subsect:decomposition} a different characterization of these new $2$-Segal conditions in terms of active-inert pullbacks.  This generalizes the equivalence between $2$-Segal and decomposition spaces from \cite{galvez2018decomposition} to this more general class of indexing categories.

\subsection{2-Segal Objects}\label{subsect:2-Segal}
To simplify the exposition, we frequently abuse notation and identify a simplicial object $X_{\bullet}:\Delta^{\op} \rightarrow \c{C}$ with its extension to $\mathsf{Fin}^{\op}_{\geq}$ the category of all finite non-empty linearly ordered sets.  
Let us recall, as a start, the definition of a $2$-Segal object.

\begin{definition}\cite[Def.2.3.1]{dyckerhoff2019higher}\label{def:2-Seg}
    Let $X_{\bullet}:\Delta^{\op} \xrightarrow{} \c{C}$ be a simplicial object. $X_{\bullet}$ is called \textit{$2$-Segal} if for every $n\geq 3$ and $0 \leq i < j\leq n$ the map
    \[
        X_{n} \xrightarrow{} X_{\{0,1,\cdots,i,j,j+1,\cdots,n\}}\times_{X_{\{i,j\}}} X_{\{i,i+1,...,j\}}
    \]
    is an equivalence. We denote by $\operatorname{2-\Seg}_{\Delta}(\c{C})$ the full subcategory of $\Fun(\Delta^{\op},\c{C})$ generated by $2$-Segal objects.
\end{definition}
  The morphisms in the $\infty$-category $\operatorname{2-\Seg}_{\Delta}(\c{C})$ are morphisms of simplicial objects. Under the equivalence between $2$-Segal objects and algebra objects in Spans \cite{stern20212}, a general simplicial map between $2$-Segal objects induces a lax-algebra morphism between the corresponding algebra objects \cite[Sect.4.2]{walde2016hall}. Let us sort out what a strong algebra morphism looks like in terms of $2$-Segal objects: 

\begin{definition}\label{def:2-segspan}
    Let $\sigma:\Delta^{1}\xrightarrow{} \Span(\operatorname{2-\Seg}_{\Delta}(\c{C}))$ be a $1$-morphism in the $\infty$-category of spans of $2$-Segal objects.\footnote{See Construction~\ref{constr:span}} The morphism $\sigma$ can be represented by a span
    \[
        \begin{xy}
            \xymatrix{
            & M_{\bullet} \ar[dr]^{t} \ar[dl]_{s} & \\
            X_{\bullet} & & Y_{\bullet}\\
            }
        \end{xy}
    \]
    with $X_{\bullet},Y_{\bullet}$ and $M_{\bullet}:\Delta^{\op}\rightarrow \c{C}$ $2$-Segal.
    The morphism $\sigma$ is called a \textit{$2$-Segal span} if for every $n\geq 0$ the induced diagrams
   \[
        \begin{xy}
            \xymatrix{ X_{n} \ar[d] & M_{n} \ar[l] \ar[d] \\
            X_{\{0,n\}} & M_{\{0,n\}} \ar[l] \\}
        \end{xy}
    \]
    and
    \[
        \begin{xy}
            \xymatrix{Y_{n} \ar[d] & M_{n} \ar[d] \ar[l] \\
             \prod_{i=0}^{n-1}Y_{\{i,i+1\}} &  \prod_{i=0}^{n-1}M_{\{i,i+1\}} \ar[l] \\
            }
        \end{xy}
    \]
    are pullback diagrams. We call $s$ an \textit{active equifibered $\Delta^{\op}$-morphism} and $t$ a \textit{relative Segal $\Delta^{\op}$-morphism}. 
\end{definition}

\begin{notation}
    We introduce in Subsection~\ref{subsect:birel2-Seg} similar notions for other indexing categories than $\Delta$.
    If the indexing category is clear from the context, we frequently abuse notation and drop the indexing category in the notation. E.g, we will call an active equifibered $\Delta^{\op}$-morphism simply an active equifibered morphism when the indexing category is clear from the context.
    
\end{notation}

\begin{remark}
    The conditions imposed on the individual legs have previously been studied under the name CULF \cite[Sect.4]{galvez2018decomposition} and IKEO in \cite[Sect.8.5]{galvez2018decomposition}. The authors further demonstrate that these types of morphisms induce algebra morphisms in the $\infty$-category of spans. Our terminology is motivated by the theory of algebraic patterns \cite{barkan2022envelopes}, where similar notions appear.
\end{remark}

 It is worth noting that in the definition of a $2$-Segal span, we explicitly require the tip of the span to be $2$-Segal as well. This is not an additional requirement. Indeed, as demonstrated in \cite[Lem.4.6]{galvez2018decomposition}, the source of an active equifibered morphism $f: X_{\bullet}\rightarrow Y_{\bullet}$ with target $2$-Segal is itself $2$-Segal.

In the following, we interpret $2$-Segal spans as the morphisms of a category and therefore have to study their behavior under composition of spans:

\begin{lemma}\label{lem:2segcomp}
    Let $\sigma:\Lambda_{1}^{2} \xrightarrow{} \Span(\operatorname{2-\Seg}_{\Delta}(\c{C}))$ be given by a composable pair of spans
    \[
        \begin{xy}
            \xymatrix{& M_{\bullet} \ar[dr] \ar[dl] & & N_{\bullet} \ar[dr] \ar[dl] & \\
            X_{\bullet} & & Y_{\bullet} & & Z_{\bullet}}
        \end{xy}
    \]
    s.t. each individual span is a $2$-Segal span. Then also the composite span
   \[
            \begin{xy}
                \xymatrix{ & M_{\bullet} \times_{Y_{\bullet}} N_{\bullet}\ar[dr]^{t} \ar[dl]^{s}&\\ 
                X_{\bullet} & & Z_{\bullet}}
            \end{xy}
      \]
    is a $2$-Segal span.
\end{lemma}
\begin{proof}
    First, we show that $s$ is active equifibered. For $n\neq 0$, consider the diagram:
    \begin{equation*}
    \begin{gathered}
            \begin{xy}
            \xymatrix{N_{\{0,n\}}\times_{Y_{\{0,n\}}} M_{\{0,n\}} \ar[d] & N_{n} \times_{Y_{n}} M_{n} \ar[d] \ar[l] \\
            M_{\{0,n\}} \ar[d] & M_{n} \ar[d] \ar[l] \\
            X_{\{0,n\}} & X_{n} \ar[l]}
            \end{xy}
    \end{gathered}
     \end{equation*}
     We need to prove that the exterior diagram is a pullback diagram. By assumption, the lower square is a pullback diagram. Additionally, the upper square is a pullback square if and only if the outer rectangle in the diagram
   \[
            \begin{xy}
                \xymatrix{N_{\{0,n\}} \ar[d] & N_{n} \ar[d] \ar[l] & N_{n} \times_{Y_{n}} M_{n} \ar[d] \ar[l] \\
                Y_{\{0,n\}} & Y_{n} \ar[l] & M_{n} \ar[l]}
            \end{xy}
     \]
    is a pullback square. But this follows from the pasting law.

   On the other hand, we show that $t$ is relative Segal. We need to show that for every $n\geq 0$, the exterior rectangle in the diagram:
    \[
            \begin{xy}
                \xymatrix{Z_{n} \ar[d] &N_{n} \ar[d] \ar[l]  & M_{n}\times_{Y_{n}} N_{n} \ar[l] \ar[d] \\
                \prod_{i=0}^{n-1} Z_{\{i,i+1\}} & \prod_{i=0}^{n-1} N_{\{i,i+1\}} \ar[l] & \prod_{i=0}^{n-1}M_{\{i,i+1\}}\times_{Y_{\{i,i+1\}}} N_{\{i,i+1\}}\ar[l]}
            \end{xy}
     \]
    is a pullback. By assumption, the left square is a pullback square. Further, the right square is a pullback square if and only if the outer rectangle in the diagram:
     \[
            \begin{xy}
                \xymatrix{ \prod_{i=0}^{n-1} M_{\{i,i+1\}} \ar[d] & M_{n} \ar[d] \ar[l] & N_{n} \times_{Y_{n}} M_{n} \ar[d] \ar[l] \\
                 \prod_{i=0}^{n-1}Y_{\{i,i+1\}} & Y_{n} \ar[l] & N_{n} \ar[l]}
            \end{xy}
    \]
    is a pullback square. But this is again a consequence of the pasting law.
\end{proof}

\begin{definition}
    We define the \emph{$\infty$-category of $2$-Segal objects} $\TSegSpan(\c{C})$ as the wide subcategory of $\Span(\operatorname{2-\Seg_{\Delta}}(\c{C}))$ with morphisms given by $2$-Segal spans.
\end{definition}

\subsection{Birelative 2-Segal Objects}\label{subsect:birel2-Seg}

 To describe more general algebraic structures in categories of spans, we need to consider more general indexing categories than $\Delta$. In this subsection, we extend the framework of the previous sections to encompass categories of functors with source $\Delta_{/[1]}$. We call such a functor $M_{\bullet}:\Delta^{\op}_{/[1]} \xrightarrow{} \c{C}$ a \emph{birelative simplicial object}.  The corresponding birelative $2$-Segal conditions are a multi-coloured version\footnote{In the sense of opeards} of the $2$-Segal condition from Definition~\ref{def:2-Seg}. Similar conditions have already been independently introduced in \cite{carlier2020incidence} using a different presentation of the category $\Delta_{/[1]}$ for applications in combibatorics.

\begin{definition}\label{def:birel}
    Let $M_{\bullet}:\Delta^{\op}_{/[1]} \xrightarrow{} \c{C}$ be a birelative simplicial object. $M_{\bullet}$ is called \textit{birelative $2$-Segal}, if for every $n\geq 3$, $f:[n] \xrightarrow{} [1]$ and $0 \leq i < j\leq n$ the diagram
    \[
       \begin{xy}
           \xymatrix{M_{f} \ar[r] \ar[d] & M_{f|_{i,...,j}} \ar[d] \\
        M_{f|_{0,...,i,j,...,n}} \ar[r] & M_{f|_{i,j}}}
       \end{xy}
   \]
is Cartesian. We denote the full subcategory of $\Fun(\Delta_{/[1]}^{\op},\c{C})$ generated by the birelative $2$-Segal objects by $\operatorname{Bi2Seg_{\Delta}}(\c{C})$.
\end{definition}

  As $2$-Segal objects encode algebra objects, we will see in Theorem~\ref{thm:birseg} that birelative $2$-Segal objects encode bimodule objects. For completeness, we also introduce a $2$-Segal type condition describing left and right modules. These have already been studied in the case of $1$-categories in \cite{walde2016hall} and \cite{young2018relative}. We denote by $\Delta_{\leq}$ (resp. $\Delta_{\geq}$) the full subcategory of $\Delta_{/[1]}$ generated by the objects $f:[n] \xrightarrow{} [1]$ that take the value $0$ (resp. $1$) at least and the value $1$ (resp. $0$) at most ones. We call a functor with source $\Delta_{\leq}$ (resp. $\Delta_{\geq}$) a \emph{left (resp. right) relative simplicial object}. The corresponding $2$-Segal condition reads as:

\begin{definition}
     Let $M_{\bullet}:\Delta^{\op}_{\leq} \xrightarrow{} \c{C}$ be a left relative simplicial object. $M_{\bullet}$ is called \textit{left relative $2$-Segal}, if for every $n\geq 3$, $f:[n] \xrightarrow{} [1]\in \Delta_{\leq}$ and $0 \leq i < j\leq n$ the diagram
    \[
       \begin{xy}
           \xymatrix{M_{f} \ar[r] \ar[d] & M_{f|_{i,...,j}} \ar[d] \\
        M_{f|_{0,...,i,j,...,n}} \ar[r] & M_{f|_{i,j}}}
       \end{xy}
   \]
is Cartesian. We denote the full subcategory of $\Fun(\Delta_{\leq}^{\op},\c{C})$ generated by the left relative $2$-Segal objects by $\operatorname{L2Seg_{\Delta}}(\c{C})$. Similarly, we define \textit{right relative $2$-Segal objects} and the $\infty$-category of right relative $2$-Segal objects $\operatorname{R2Seg_{\Delta}}(\c{C})$.
\end{definition}
 
 \begin{remark}
The categories $\Delta_{\leq}$ and $\Delta_{\geq}$ defined above are equivalent to the categories, denoted with the same symbols, defined in \cite[Def.3.2.5]{walde2016hall}. Further, our notions of left and right relative $2$-Segal objects coincide with the notion of a relative $2$-Segal object in \cite[Def.3.5.1.]{walde20212}. 
\end{remark}

 To handle the cases of bi-, left, and right relative $2$-Segal conditions at once, we introduce the following notation:

\begin{notation}
    For $\#\in \{/[1],\leq, \geq\}$ we call a functor $M_{\bullet}:\Delta_{\#}^{\op}\rightarrow \c{C}$ a $\#$-relative simplicial object. Further, we call a $\#$-relative simplicial object a $\#$-relative $2$-Segal object if it satisfies the corresponding $2$-Segal conditions. We denote the full subcategory generated by $\#$-relative $2$-Segal objects by $\operatorname{\#-2Seg_{\Delta}}(\c{C})$.
\end{notation}

\begin{example}\label{ex:regularmod}
    Let $X_{\bullet}:\Delta^{\op}\rightarrow \c{C}$ be $2$-Segal. We can associate to $X_{\bullet}$ a birelative, left and right relative $2$-Segal object
    \begin{itemize}
        \item[(1)] $X_{\bullet}^{b}:\Delta^{\op}_{/[1]} \rightarrow \Delta^{\op} \rightarrow \c{C}$
        \item[(2)] $X_{\bullet}^{l}:\Delta^{\op}_{\leq} \rightarrow \Delta^{\op} \rightarrow \c{C}$
        \item[(3)] $X_{\bullet}^{r}:\Delta^{\op}_{\geq} \rightarrow \Delta^{\op} \rightarrow \c{C}$
    \end{itemize}
    via precomposition with the corresponding forgetful functor. It follows from the $2$-Segal conditions for $X_{\bullet}$ that these objects fulfill the respective (bi)relative Segal conditions. Under the equivalence of Corollary~\ref{cor:lmod} and Theorem~\ref{thm:birseg}, these correspond to the regular bimodule (resp. left, right module) associated to the algebra object corresponding to $X_{\bullet}$.
\end{example}

 We also introduce, in analogy with Definition~\ref{def:2-segspan}, a notion of morphism between $\#$-relative $2$-Segal objects:

\begin{definition}\label{def:birsegspan}
    Let $X_{\bullet},Y_{\bullet}$ and $M_{\bullet}$ be $\#$-relative $2$-Segal. A \textit{$\#$-relative $2$-Segal span} from $A_{\bullet}$ to $B_{\bullet}$ is given by a span
   \[
           \begin{xy}
               \xymatrix{& M_{\bullet} \ar[dr]^{t} \ar[dl]_{s}&  \\
               X_{\bullet} & & Y_{\bullet}}
           \end{xy}
    \]
    s.t. for every $n\geq 0$ and $f:[n] \xrightarrow{}[1]\in \Delta_{\#}$ the diagrams
    \[
           \begin{xy}
               \xymatrix{X_{f} \ar[d]& M_{f} \ar[d] \ar[l] \\
               X_{f|_{0,n}} & M_{f|_{0,n}} \ar[l]
               }
           \end{xy}
      \]
    and 
    \[
            \begin{xy}
                \xymatrix{Y_{f} \ar[d]& M_{f} \ar[d] \ar[l] \\
                 \prod_{i=0}^{n-1} Y_{f|_{i,i+1}} & \prod_{i=0}^{n-1} M_{f|_{i,i+1}} \ar[l] }
            \end{xy}
      \]
    are pullback diagrams. We call $s$ an \textit{active equifibered $\Delta^{\op}_{\#}$-morphism} and $t$ a \textit{relative Segal $\Delta^{\op}_{\#}$-morphism}. 
\end{definition}
 Analogously to Lemma~\ref{lem:2segcomp}, one can prove the following:
\begin{lemma}
    Let $\sigma:\Lambda_{1}^{2} \xrightarrow{} \Span(\operatorname{\#-2Seg_{\Delta}}(\c{C}))$ be given by a composable pair of spans
    \[
        \begin{xy}
            \xymatrix{& M_{\bullet} \ar[dr] \ar[dl] & & N_{\bullet} \ar[dr] \ar[dl] & \\
            X_{\bullet} & & Y_{\bullet} & & Z_{\bullet}}
        \end{xy}
    \]
    s.t. the individual spans are $\#$-relative $2$-Segal spans. Then also the composite span
    \[
            \begin{xy}
                \xymatrix{ & M_{\bullet} \times_{Y_{\bullet}} N_{\bullet}\ar[dr] \ar[dl]&\\ 
                X_{\bullet} & & Z_{\bullet}}
            \end{xy}
      \]
    is a $\#$-relative $2$-Segal span.
\end{lemma}
\begin{proof}
    Similar to Lemma~\ref{lem:2segcomp}.
\end{proof}
\begin{definition}
     We define the \emph{$\infty$-category of birelative $2$-Segal objects} $\operatorname{Bi2Seg^{\leftrightarrow}_{\Delta}}(\c{C})$ to be the wide subcategory of $\Span(\operatorname{Bi2Seg_{\Delta}}(\c{C}))$ with morphisms birelative $2$-Segal spans.

    Similarly, we define the \emph{$\infty$-category of left (resp. right) relative $2$-Segal objects} $\operatorname{L2Seg^{\leftrightarrow}_{\Delta}}(\c{C})$ (resp. $\operatorname{R2Seg^{\leftrightarrow}_{\Delta}}(\c{C}))$) as the wide subcategory of $\Span(\operatorname{L2Seg_{\Delta}}(\c{C}))$ (resp. $\Span(
    \operatorname{R2Seg_{\Delta}}(\c{C}))$) with morphisms given by left (resp. right) relative $2$-Segal spans.
\end{definition}

 It has been shown in \cite[Prop.3.5.10]{walde2016hall} for $1$-categories that left and right relative $2$-Segal objects admit a different description in terms of morphisms of simplicial objects, called relative $2$-Segal objects. This description is useful in the study of examples. Let us, therefore, lift this description into the realm of $\infty$-categories. We do this for left relative $2$-Segal objects. The case of right relative $2$-Segal objects is analogous.

\begin{definition}\cite[Def.2.2,Prop.3.5.10]{young2018relative,walde2016hall}
    A morphism $\pi:X_{\bullet} \xrightarrow{} Y_{\bullet}$ of simplicial objects is called \textit{relative $2$-Segal}, if 
    \begin{itemize}
        \item[1)] the source object $X_{\bullet}$ is $1$-Segal,
        \item[2)] the target object $Y_{\bullet}$ is $2$-Segal,
        \item[3)] and for every $0 \leq i <j \leq n$ the following square is Cartesian
        \begin{equation}\label{eq:active}
                \begin{tikzcd}
                X_n \arrow[r] \arrow[d] & Y_{\{i,..j\}} \arrow[d] \\
                    X_{\{0,...,i,j,...,n\}} \arrow[r] & Y_{\{i,j\}}
             \end{tikzcd}
        \end{equation}
    \end{itemize}
    We denote the full subcategory of $\Fun(\Delta^{1},\Fun(\Delta^{\op},\c{C}))$ generated by relative $2$-Segal morphisms by $\operatorname{Rel2Seg_{\Delta}}(\c{C})$.
\end{definition}
\begin{remark}
    We can extend the square in Equation~\eqref{eq:active} to the following diagram
    \[
    \begin{tikzcd}
        X_{n} \arrow[r] \arrow[d]& Y_{n} \arrow[r]\arrow[d] & Y_{\{i,\dots,j\}} \arrow[d]\\
        X_{\{0,\dots i,j,\dots,n\}} \arrow[r] & Y_{\{0,\dots,i,j,\dots,n\}}\arrow[r] & Y_{\{i,j\}}
    \end{tikzcd}
    \]
    Since $Y_{\bullet}$ is $2$-Segal, the right square is a pullback square. By the pasting law, it follows that the outer square is a pullback square if and only if the right square is a pullback square. Unraveling the definitions, this means that a morphism $X_{\bullet}\rightarrow Y_{\bullet}$ is relative $2$-Segal if and only if it is an active equifibered $\Delta^{\op}$-morphism from a $1$-Segal to a $2$-Segal object. 
\end{remark}
\begin{remark}
    Let us unpack the definition of a relative $2$-Segal object $\pi:X_{\bullet} \rightarrow Y_{\bullet}$. The $2$-Segal object $X_{\bullet}$ encodes an algebra object, and the object $Y_{0}$ encodes the underlying object of a left module. The remaining data encodes the module-action. For example, the module action of $X_{1}$ onto $Y_{0}$ is given by the span
    \[
    \begin{tikzcd}
        & X_{1} \arrow[dr, "\del_{0}"] \arrow[dl, swap, "(\pi_{1}{,}\del_{1})"] & \\
        Y_{1} \times X_{0} & & X_{0}
    \end{tikzcd}
    \]
    The relative $2$-Segal conditions again encode the higher coherences of the left module action.
\end{remark}

 We also introduce a notion of morphism between relative $2$-Segal objects:

\begin{definition}\label{def:relativesegalspan}
    Let $\pi^{i}_{\bullet}:X^{i}_{\bullet}\rightarrow Y^{i}_{\bullet}$ for $0\leq i \leq 2$ be relative $2$-Segal objects. A \textit{relative $2$-Segal span} from $\pi^{0}_{\bullet}$ to $\pi^{2}_{\bullet}$ is given by a span 
    \[
    \begin{tikzcd}
    X_{\bullet}^{0} \arrow[d, swap, "\pi^{0}_{\bullet}"] & X_{\bullet}^{1} \arrow[d, "\pi^{1}_{\bullet}"] \arrow[l, swap, "s^{X}_{\bullet}"] \arrow[r, "t^{X}_{\bullet}"] & X_{\bullet}^{2} \arrow[d,"\pi^{2}_{\bullet}"] \\
        Y_{\bullet}^{0} & Y_{\bullet}^{1} \arrow[l, "s_{\bullet}^{Y}"] \arrow[r,swap, "t_{\bullet}^{Y}"] & Y_{\bullet}^{2} 
    \end{tikzcd}
    \]
    of relative $2$-Segal objects s.t. 
    \begin{itemize}
        \item[(1)] the span $Y_{\bullet}^{0} \xleftarrow{s_{\bullet}^{Y}} Y_{\bullet}^{1} \xrightarrow{t_{\bullet}^{Y}}Y_{\bullet}^{2}$ is $2$-Segal,
        \item[(2)] for every $n\geq 1$ the square
        \[
        \begin{tikzcd}
            X^{1}_{n} \arrow[r, "s_{n}^{X}"] \arrow[d]& X^{0}_{n} \arrow[d] \\
            X^{1}_{\{0\}} \arrow[r, "s_{1}^{X}"] & X^{0}_{\{0\}}
        \end{tikzcd}
        \]
        is Cartesian,
        \item[(3)] and for every $n\geq 1$ the diagram
        \[
        \begin{tikzcd}
            X^{1}_{n} \arrow[r, "t_{n}^{X}"] \arrow[d]& X^{2}_{n} \arrow[d] \\
            Y^{1}_{\{0,1\}}\times \dots \times Y_{\{n-2,n-1\}}^{1} \times X^{1}_{\{n\}} \arrow[r] & Y^{2}_{\{0,1\}}\times \dots \times Y^{2}_{\{n-2,n-1\}} \times X^{2}_{\{n\}}
        \end{tikzcd}
        \]
        is Cartesian.
    \end{itemize}
    We call the pair $(s^{X}_{\bullet},s^{Y}_{\bullet})$ \textit{active equifibered} and the pair $(t^{X}_{\bullet},t^{Y}_{\bullet})$ \textit{relative Segal}. 
\end{definition}
\begin{definition}
    We define then \emph{$\infty$-category of relative $2$-Segal objects} as the wide subcategory $\operatorname{Rel2Seg_{\Delta}^{\leftrightarrow}}(\c{C})$ of the $\infty$-category $\Span(\operatorname{Rel2Seg_{\Delta}}(\c{C}))$ with morphisms given by relative $2$-Segal spans.
\end{definition}

We now turn to the construction of an equivalence $\Theta_{L}^{\leftrightarrow}: \operatorname{L2Seg_{\Delta}^{\leftrightarrow}(\c{C})} \xrightarrow{\simeq} \operatorname{Rel2Seg_{\Delta}^{\leftrightarrow}}$. To do so, we construct an equivalence 
\[
\Theta_{L}:\operatorname{L2Seg_{\Delta}}(\c{C}) \xrightarrow{\simeq} \operatorname{Rel2Seg_{\Delta}}(\c{C})
\]
that induces $\Theta_{L}^{\leftrightarrow}$ on $\infty$-categories of spans. 

For the construction of $\Theta_{L}$, we use observation \cite[Rem.3.2.8]{walde2016hall}. First, note that the category $\Delta_{\leq}$ contains two copies of the simplex category as full subcategories. We denote the corresponding fully faithful inclusion by $i^{\leq}_{0}:\Delta\xrightarrow{}\Delta_{\leq}$ and $i^{\leq}_{1}:\Delta\xrightarrow{} \Delta_{\leq}$. These are given on objects by

\begin{align*}
&i^{\leq}_{0}([n])(k) = 0 \quad \forall k\in [n],    \\
&i^{\leq}_{1}([n])(k) = \begin{cases}
    0 \quad \forall k \neq n+1 \\
    1 \quad k=n+1
\end{cases} .
\end{align*}
Further, we observe that the morphisms $d_{n+1}:i^{\leq}_{0}([n]) \rightarrow i^{\leq}_{1}([n])$ assemble into a natural transformation $d_{\bullet+1}:i^{\leq}_{0}(-)\Rightarrow i^{\leq}_{1}(-)$. This datum is equivalent to the datum of a lax cocone
\begin{equation}\label{diag:lax}
\begin{tikzcd}
    \Delta \arrow[dr,"i_{0}^{\leq}", ""{name=U, below}] \arrow[dd,swap, "id_{\Delta}"] & \\
    & \Delta_{\leq} \\
    \Delta \arrow[ur, swap,"i_{1}^{\leq}"] \arrow[Rightarrow,swap, from=U, shorten >=1.5ex]& 
\end{tikzcd}
\end{equation}
and therefore induces a unique map from the lax colimit. It follows from \cite[Thm.7.4]{gepner2015lax} that the lax colimit is given by the total space of the cocartesian Grothendieck construction $\int_{\Delta^{1}}id_{\Delta}\rightarrow \Delta$, where we identify the functor $id_{\Delta}:\Delta \rightarrow \Delta$ with a morphism in $\Cat$. The morphism 
\[
\theta_{L}: \int_{\Delta^{1}}id_{\Delta} \rightarrow \Delta_{\leq}
\]
induced by Diagram~\ref{diag:lax} is given on the fiber over $0$ by $i_{0}^{\leq}:\Delta \rightarrow \Delta_{\leq}$, on the fiber over $1$ by $i_{1}^{\leq}:\Delta \rightarrow \Delta_{\leq}$ and on morphisms $f:([n],0) \rightarrow ([m],1)$ by 
\[
i_{0}^{\leq}([n]) \xrightarrow{i_{0}^{\leq}(f)} i_{0}^{\leq}(m) \xrightarrow{d_{m+1}} i_{1}^{\leq}(m).
\]
It follows from the explicit description that this functor is an equivalence. 

\begin{prop}\label{prop:relseg}
    The functor $\Theta_{L}:=\theta_{L}^{\ast}:\Fun(\Delta_{\leq}^{\op},\c{C})\rightarrow \Fun(\int_{\Delta^{1}}\id_{\Delta}^{\op},\c{C})$ restricts to an equivalence of $\infty$-categories
    \[
    \Theta_{L}:\operatorname{L2Seg_{\Delta}}(\c{C}) \rightarrow \operatorname{Rel2Seg_{\Delta}}(\c{C}). 
    \]
\end{prop}
\begin{proof}
  It follows from the universal property of the lax colimit that we have an equivalence of $\infty$-categories
    \[
    \Fun(\int_{\Delta^{1}}id_{\Delta}^{\op},\c{C}) \simeq \Fun(\Delta^{1}\times \Delta^{\op},\c{C}) \simeq \Fun(\Delta^{1},\Fun(\Delta^{\op},\c{C})).
    \]
    Under this equivalence, the functor $\Theta_{L}$ maps a left relative simplicial object $X_{\bullet}:\Delta^{\op}_{\leq} \rightarrow\c{C}$ to the natural transformation
    \[
    (i_{1}^{\leq})^{\ast}X_{\bullet} \xrightarrow{\del_{n+1}} (i_{0}^{\leq})^{\ast} X_{\bullet}
    \]
    between simplicial objects. The claim follows from the observation that the functor $\Theta_{L}$ maps left relative $2$-Segal objects to relative $2$-Segal objects.
\end{proof}
 It follows from the discussion after Construction~\ref{constr:span} that the construction of the $\infty$-category of spans induces a functor:
\[
\Span(-):\sf{Cat}_{\infty}^{\lex} \rightarrow \sf{Cat}_{\infty} \
\]
with source the $\infty$-category of small $\infty$-categories with finite limits and finite limit preserving functors. 
We use this functor to conclude our comparison:
\begin{cor}\label{cor:lrelandrel}
    The functor $\Theta_{L}^{\leftrightarrow}:=\Span(\Theta_{L}): \Span(\operatorname{L2Seg_{\Delta}}(\c{C})) \rightarrow \Span(\operatorname{Rel2Seg_{\Delta}}(\c{C})) $ restricts to an equivalence of $\infty$-categories
    \[
    \Theta_{L}^{\leftrightarrow}: \operatorname{L2Seg_{\Delta}^{\leftrightarrow}}(\c{C})\rightarrow \operatorname{Rel2Seg_{\Delta}^{\leftrightarrow}}(\c{C})
    \]
\end{cor}
\begin{proof}
    It suffices to show that $\Theta_{L}^{\leftrightarrow}$ maps left relative $2$-Segal spans to relative $2$-Segal spans. But this follows from unraveling the definitions.
\end{proof}

\begin{remark}
    Analogously to the proof of Corollary~\ref{cor:lrelandrel} one can construct an equivalence 
   \[
\Theta^{\leftrightarrow}_{R}:\operatorname{R2Seg^{\leftrightarrow}_{\Delta}}(\c{C}) \xrightarrow{\simeq} \operatorname{Rel2Seg^{\leftrightarrow}_{\Delta}}(\c{C})
    \]
between the $\infty$-category of right relative $2$-Segal objects and relative $2$-Segal objects.
\end{remark}
   
 We use Corollary~\ref{cor:lrelandrel} to construct examples of left relative $2$-Segal objects and spans. This is particularly fruitful for the example of the hermitian Waldhausen construction that we discuss in Section~\ref{subsect:relbiseg}.

\begin{example}\label{ex:regularmodulerelativ}
    Let $X_{\bullet}:\Delta^{\op}\rightarrow\c{C}$ be a 2-Segal object. We denote by 
    \[
    -\ast[0]:\Delta \rightarrow\Delta
    \]
    the endofunctor of $\Delta$ that adds a maximal element. The simplical object $P^{\triangleright}(X)_{\bullet}:=(-\ast[0])^{\ast}(X_{\bullet})$ is called the final path space \cite[Section 6.2]{dyckerhoff2019higher}. It comes equipped with a morphism of simplicial sets
    \[
    p_{\triangleright}:P^{\triangleright}(X)_{\bullet} \rightarrow X_{\bullet}
    \]
    This morphism is a relative $2$-Segal object. Indeed, it follows from \cite[Thm.6.3.2]{dyckerhoff2019higher} that $P^{\triangleright}(X)_{\bullet}$ is $1$-Segal and it is easy to see that the relative $2$-Segal conditions for $p_{\triangleright}$ reduce to the $2$-Segal conditions on $X_{\bullet}$. This relative $2$-Segal object describes the regular left action of the $2$-Segal object $X_{\bullet}$ on itself. Similarly, one defines the initial path space $P^{\triangleleft}(X)_{\bullet}$ that describes the regular right action of $X_{\bullet}$ on itself.
\end{example}

\begin{example}\label{example:twisted}
    Let $X_{\bullet}$ be a 2-Segal object in $\c{C}$.  Recall, that the edgewise subdivision functor $e:\Delta \rightarrow \Delta$ is defined on objects as $[n]\mapsto [n] \ast [n]^{\op}$. We denote the functor given by precomposition with $e$ by 
    \[
    \Tw(-):= e^{\ast}: \Fun(\Delta^{\op},\c{C}) \rightarrow \Fun(\Delta^{\op},\c{C})
    \]
    and call it the twisted arrow construction. Further, we call for every simplicial object $X_{\bullet}$ the associated simplicial object $\Tw(X)_{\bullet}$ the twisted arrow simplicial object. It has been shown in \cite[Thm.2.9]{bergner2020edgewise}, that for every $2$-Segal object $X_{\bullet}$, the simplicial object $\Tw(X)_{\bullet}$ is $1$-Segal. Moreover, it is easy to check that the morphism
    \[
        \Tw(X)_{\bullet} \rightarrow X_{\bullet} \times X_{\bullet}^{\op}
    \]
    defines a relative $2$-Segal object. This encodes the regular bimodule action of $X_{\bullet}$ on itself.
\end{example}

\subsection{Birelative Decomposition Spaces}\label{subsect:decomposition}

An alternative, but equivalent, way to formalize the $2$-Segal conditions are the \emph{decomposition space conditions} of \cite{galvez2018decomposition}. For the definition of these recall that the simplex category $\Delta$ admits a factorization system $(\Delta^{\act},\Delta^{\inert})$ generated by the active (depicted $\twoheadrightarrow$) and inert morphisms (depicted $\rightarrowtail$) \cite{lurie2017higher}. This factorization system has the special property that $\Delta$ admits active-inert-pushouts, i.e. every cospan
\[
\begin{tikzcd}
    n \arrow[r,tail, "f"]  \arrow[d,two heads, "g"]& m \\
    l & 
\end{tikzcd}
\]
with $f$ inert and $g$ active admits an extension to a pushout diagram:
\[
\begin{tikzcd}
    n \arrow[r,tail]  \arrow[d,two heads]& m  \arrow[d,two heads]\\
    l    \arrow[r,tail] & k 
\end{tikzcd}
\]
in $\Delta$.
A simplicial space $X_{\bullet}:\Delta^{\op}\rightarrow \cS$ is then called a decomposition space if it maps active-inert pullbacks in $\Delta^{\op}$ to pullback diagrams of spaces.
The goal of this section is to extend the definition of a decomposition space to the category $\Delta_{/[1]}$. Therefore we first recall the definition of the active-inert factorization system on $\Delta_{/[1]}$
\begin{notation}
    Given an object $f:[n] \xrightarrow{} [1] \in \Delta_{/[1]}$, we will frequently denote it by $f_{n}$ to indicate its dependence on the source. Given $i<j$ in $[n]$ we denote by $[i,j]:=\{i,...,j\} \subset [n]$ the subinterval from $i$ to $j$ and by $f_{[i,j]}$ the restriction of $f_{n}$ to this subinterval.
\end{notation}
\begin{definition}
    A morphism $f:g_{n}^{0} \rightarrow g_{m}^{1}$ in $\Delta_{/[1]}$ is called 
    \begin{itemize}
        \item[$(1)$] \textit{active} if the underlying morphism of linearly ordered sets $f:[n] \rightarrow [m]$ is endpoint preserving. We depict active morphisms by $\twoheadrightarrow$.
        \item[$(2)$] \textit{inert} if the underlying morphism of linearly ordered sets $f:[n] \rightarrow [m]$ is a subinterval inclusion. We depict inert morphisms by $\rightarrowtail$
    \end{itemize}
    We denote the wide subcategory of $\Delta_{/[1]}$ spanned by the active (resp. inert) morphisms by $\Delta^{\act}_{/[1]}$ (resp. $\Delta^{\inert}_{/[1]}$). Similarly, we call a morphism $f$ in $\Delta^{\op}_{/[1]}$ active (resp. inert) if it is active (resp. inert) as a morphism in $\Delta_{/[1]}$.
\end{definition}
\begin{prop}
    Every morphism $f:g^{0}_{n} \rightarrow g^{1}_{m}$ in $\Delta_{/[1]}$ admits an unique factorization $f=f''\circ f'$ with $f''$ inert and $f'$ active. In particular the subcategories $(\Delta^{\act}_{/[1]},\Delta^{\inert}_{/[1]})$ form a factorization system on $\Delta_{/[1]}$ \cite[Def.5.2.8.8]{lurie2006higher}.
\end{prop}
\begin{proof}
    This is a consequence of the dual of \cite[Prop.2.1.2.5]{lurie2017higher} applied to the Cartesian fibration $\Delta_{/[1]} \rightarrow \Delta$. Alternatively, one can see this more directly as follows. The underlying morphism of linearly ordered sets $f:[n]\rightarrow [m]$ admits an active-inert factorization given by the active map $f':[n] \twoheadrightarrow [f(0),f(n)]$ and  the inert map $f'':[f(0),f(n)] \hookrightarrow [m]$. These morphisms admit unique extensions to morphisms in $\Delta_{/[1]}$.
\end{proof}
 To characterize the active-inert pullback diagrams in $\Delta^{\op}_{/[1]}$, we need the following definition:
\begin{definition}\label{def:composable}
    Let $g:[n]\xrightarrow{}[1]$ and $f:[m] \xrightarrow{} [1]$ be two objects in $\Delta_{/[1]}$. We will call $g$ and $f$ composable, if $g(n)=f(0)$. The concatenation of $g$ with $f$ denoted $g\ast f: [n+m] \xrightarrow{} [1]$ is defined as the map:
    \[
        (g\ast f)(k) = \begin{cases}
            g(k) \quad k\leq n \\
            f(k-n) \quad k \geq n
        \end{cases}
    \]
     Similarly, we denote for two linearly ordered sets $S, T$ by $S\ast T$ the partially ordered set $S\coprod T/(s_{max}\sim t_{min})$, where $s_{max}$ denotes the maximal element of $S$ and $t_{min}$ denotes the minimal element of $T$.
\end{definition}

 Given an active morphism $f^{2}:h^{0}_{k} \twoheadrightarrow h^{1}_{l}$ and an inert morphism $e^{1}:g^{0}_{n} \rightarrowtail h^{0}_{k}$ in $\Delta_{/[1]}$ we can consider an active-inert factorization of its composite $f^{2}\circ e^{1}$. This factorization can be organized into a commutative square
\[
\begin{tikzcd}
    g^{0}_{n} \arrow[d,rightarrowtail, "e^{1}"] \arrow[r, twoheadrightarrow, "f^{1}"] & g^{1}_{m} \arrow[d, rightarrowtail, "e^{2}"] \\
    h^{0}_{k} \arrow[r, twoheadrightarrow, "f^{2}"] & h^{1}_{l}
\end{tikzcd}
\]
It follows from the definition of active and inert morphisms that this square is equivalent to the square
\[
    \begin{tikzpicture}[auto]
    \node (A) at (0,0){$g^{0}_{n}$};
    \node (B) at (4,0){$g^{1}_{m}$};
    \node (C) at (0,-2){$h^{0}_{a}\ast g^{0}_{n} \ast h^{0}_{b}$};
    \node (D) at (4,-2){$h^{1}_{a}\ast g^{1}_{m} \ast h^{1}_{b}$};

    \draw[->>] (A) to node[scale=0.7] {$f^{1}$} (B);
    \draw[>->] (A) to (C);
    \draw[>->] (B) to (D);
    \draw[->>] (C) to node[scale=0.7,swap] {$f^{2}_{a} \ast f^{1}\ast f^{2}_{b} $} (D);
\end{tikzpicture}
\]
 In analogy with \cite[Sect.2.6]{galvez2018decomposition}, we call the squares of the form: 
\[
    \begin{tikzpicture}[auto]
    \node (A) at (0,0){$g^{0}_{n}$};
    \node (B) at (4,0){$g^{1}_{m}$};
    \node (C) at (0,-2){$h^{0}_{a}\ast g^{0}_{n} \ast h^{0}_{b}$};
    \node (D) at (4,-2){$h^{0}_{a}\ast g^{1}_{m} \ast h^{0}_{b}$};

    \draw[->>] (A) to node[scale=0.7] {$f^{1}$} (B);
    \draw[>->] (A) to (C);
    \draw[>->] (B) to (D);
    \draw[->>] (C) to node[scale=0.7,swap] {$\id \ast f^{1}\ast \id $} (D);
\end{tikzpicture}
\] 
\textit{identity extension squares}. These are precisely the active-inert pullbacks in $\Delta^{\op}_{/[1]}$. 
\begin{prop}
    Let $\sigma: \Delta^{1}\times \Delta^{1} \rightarrow \Delta_{/[1]}$ be an identity extension square:
    \[
    \begin{tikzpicture}[auto]
    \node (A) at (0,0){$g^{0}_{n}$};
    \node (B) at (4,0){$g^{1}_{m}$};
    \node (C) at (0,-2){$h^{0}_{a}\ast g^{0}_{n} \ast h^{0}_{b}$};
    \node (D) at (4,-2){$h^{0}_{a}\ast g^{1}_{m} \ast h^{0}_{b}$};

    \draw[->>] (A) to node[scale=0.7] {$f^{1}$} (B);
    \draw[>->] (A) to (C);
    \draw[>->] (B) to (D);
    \draw[->>] (C) to node[scale=0.7,swap] {$\id \ast f^{1}\ast \id $} (D);
\end{tikzpicture}
\] 
    Then $\sigma$ is a pushout square in $\Delta_{/[1]}$.
\end{prop}
 In analogy with \cite{galvez2018decomposition}, we introduce the following definition:
\begin{definition}\label{def:bireldecomp}
    Let $X_{\bullet}:\Delta^{\op}_{/[1]}\rightarrow \c{C}$ be a birelative simplicial object. $X_{\bullet}$ is called a \textit{birelative decomposition space} if it sends every active-inert pullback in $\Delta_{/[1]}^{\op}$ to a pullback square in $\c{C}$, i.e 
    \[
    \begin{tikzpicture}[auto]
    
    \node (A) at (0,0){$g^{0}_{n}$};
    \node (B) at (2,0){$g^{1}_{m}$};
    \node (C) at (0,-2){$h^{0}_{k}$};
    \node (D) at (2,-2){$h^{1}_{l}$};
    \node (E) at (6,0){$X_{g^{0}_{n}}$};
    \node (F) at (8,0){$X_{g^{1}_{m}}$};
    \node (G) at (6,-2){$X_{h^{0}_{k}}$};
    \node (H) at (8,-2){$X_{h^{1}_{l}}$};
    \node (I) at (3,-1) {};
    \node (J) at (5,-1) {};
    
    \draw (1.6,-1.8) -- (1.6,-1.6) -- (1.8,-1.6);
     \draw (7.6,-1.8) -- (7.6,-1.6) -- (7.8,-1.6);
    \draw[|->] (I) to node {$X$} (J);
    \draw[->>] (A) to node[scale=0.7] {$f^{1}$} (B);
    \draw[>->] (A) to node[scale=0.7,swap] {$e^{1}$} (C);
    \draw[>->] (B) to node[scale=0.7] {$e^{2}$} (D);
    \draw[->>] (C) to node[scale=0.7,swap] {$f^{2} $} (D);
     \draw[->] (F) to node[scale=0.7,swap] {$X_{f^{1}}$} (E);
    \draw[->] (G) to node[scale=0.7] {$X_{e^{1}}$} (E);
    \draw[->] (H) to node[scale=0.7,swap] {$X_{e^{2}}$} (F);
    \draw[->] (H) to node[scale=0.7] {$X_{f^{2}}$} (G);
    
\end{tikzpicture}
\] 
\end{definition}
 As in the case of $2$-Segal spaces \cite{galvez2018decomposition}, we show that the birelative decomposition space condition is equivalent to the birelative $2$-Segal condition:
\begin{prop}
    Let $X_{\bullet}:\Delta_{/[1]}^{\op} \rightarrow \c{C}  $ be a birelative simplicial object. The following are equivalent:
    \begin{itemize}
        \item[$(1)$] $X_{\bullet}$ is birelative $2$-Segal.
        \item[$(2)$] $X_{\bullet}$ is a \dspace.
    \end{itemize}
\end{prop}
\begin{proof}
    Assume first that $X_{\bullet}$ is a \dspace. Let $f:[n]\rightarrow [1]$ with $n\geq 3$ be an object of $\Delta_{/[1]}$ and $0\leq i< j\leq n$. Consider the square
    \[
\begin{tikzcd}
    f|_{i,j} \arrow[d,rightarrowtail] \arrow[r, twoheadrightarrow] & f_{i,...,j} \arrow[d, rightarrowtail, ] \\
    f_{0,...,i,j,...,n} \arrow[r, twoheadrightarrow] & f
\end{tikzcd}
\]
    in $\Delta_{/[1]}$. Note that $X_{\bullet}$ satisfies the birelative $2$-Segal conditions if and only if it maps each such square to a pullback square. But this square is an identity extension square in $\Delta_{/[1]}$ and hence an active-inert pullback. 

    Conversely assume that $X_{\bullet}$ is birelative $2$-Segal. We need to show that the image under $X_{\bullet}$ of every identity extension square 
     \[
    \begin{tikzpicture}[auto]
    \node (A) at (0,0){$g^{0}_{n}$};
    \node (B) at (4,0){$g^{1}_{m}$};
    \node (C) at (0,-2){$h^{0}_{a}\ast g^{0}_{n} \ast h^{0}_{b}$};
    \node (D) at (4,-2){$h^{0}_{a}\ast g^{1}_{m} \ast h^{0}_{b}$};

    \draw[->>] (A) to node[scale=0.7] {$f^{1}$} (B);
    \draw[>->] (A) to (C);
    \draw[>->] (B) to (D);
    \draw[->>] (C) to node[scale=0.7,swap] {$\id \ast f^{1}\ast \id $} (D);
\end{tikzpicture}
\] 
is a pullback diagram. We can extend every such to a rectangle:
   \[
    \begin{tikzpicture}[auto]
    \node (A) at (0,0){$g^{0}_{n}$};
    \node (B) at (4,0){$g^{1}_{m}$};
    \node (C) at (0,-2){$h^{0}_{a}\ast g^{0}_{n} \ast h^{0}_{b}$};
    \node (D) at (4,-2){$h^{0}_{a}\ast g^{1}_{m} \ast h^{0}_{b}$};
    \node (F) at (-4,0) {$g^{0}_{\{0,n\}}=g^{1}_{\{0,m\}}$};
    \node (E) at (-4,-2) {$h^{0}_{a} \ast g^{1}_{\{0,m\}} \ast h^{0}_{b}$};

    \draw[->>] (A) to node[scale=0.7] {$f^{1}$} (B);
    \draw[>->] (A) to (C);
    \draw[>->] (B) to (D);
    \draw[->>] (C) to node[scale=0.7,swap] {$\id \ast f^{1}\ast \id $} (D);
    \draw[->>] (F) to (A);
    \draw[->>] (E) to (C);
    \draw[>->] (F) to (E);
\end{tikzpicture}
\] 
Since $X_{\bullet}$ is birelative $2$-Segal it maps the left square and the outer rectangle to pullback diagrams. The claim follows from the pasting law for pullbacks.
\end{proof}

\begin{remark}
    The active-inert factorization system on $\Delta_{/[1]}$ restricts to an active inert factorization system on $\Delta_{\leq}$ and $\Delta_{\geq}$. One can analogously to Definition~\ref{def:bireldecomp} define a notion of left and right relative decomposition space and show that it is equivalent to the left and right relative $2$-Segal conditions.
\end{remark}

\subsection{Relation to bicomodule configurations}\label{subsect:carlier}

In \cite{carlier2020incidence}, Carlier introduced a class of birelative simplicial spaces called \emph{bicomodule configurations} and showed that these induce bicomodules in the homotopy category of spans of spaces \cite[Thm.2.4.1]{carlier2020incidence}. Our first goal of this section is to compare our notion of birelative $2$-Segal objects to the notion of bicomodule configurations:

\begin{prop}\label{prop:comparison}
    A birelative simplicial object $M_{\bullet}\in \Fun(\Delta_{/[1]}^{\op},\cC)$ is birelative $2$-Segal if and only if it is a bicomodule configuration.
\end{prop}

The definition of bicomodule configurations in \cite{carlier2020incidence} uses a different perspective on birelative simplicial objects in terms of grids. To this end, observe that the data of a birelative simplicial object $M_{\bullet}$ can be organized into tge following shape

\[
\begin{tikzcd}
    & M_{\{1\}} \arrow[r]& M_{\{1,1\}}\arrow[l,shift right=1.5ex] \arrow[l,shift left =1.5 ex]  &[-6ex] \dots \\
    M_{\{0\}} \arrow[d] & M_{\{0,1\}} \arrow[d] \arrow[r] \arrow[l]\arrow[u] & M_{\{0,1,1\}} \arrow[l,shift right=1.5ex] \arrow[l,shift left =1.5 ex] \arrow[u] \arrow[d]&[-6ex] \dots \\
    M_{\{0,0\}} \arrow[u,shift right=1.5ex] \arrow[u,shift left =1.5 ex] & M_{\{0,0,1\}} \arrow[l] \arrow[u,shift right=1.5ex] \arrow[u,shift left =1.5 ex] \arrow[r] & M_{\{0,0,1,1\}} \arrow[u,shift right=1.5ex] \arrow[u,shift left =1.5 ex] \arrow[l,shift right=1.5ex] \arrow[l,shift left =1.5 ex] &[-6ex] \dots \\[-5ex]
    \vdots & \vdots & \vdots & 
\end{tikzcd}
\]

where each row and column encodes an augmented simplicial object. More formally, this follows from the existence of a fully faithful functor $j:\Delta_{/[1]}\rightarrow \Delta_{+}\times \Delta_{+}$ that maps a morphism $f;[n]\rightarrow [1]$ that hits $0$ $i$-times and $1$ $j$-times to the object $([i-1],[j-1])$
Here, $\Delta_{+}$ denotes the augmented simplex category. Note that the essential image contains all objects in $\Delta_{+}\times \Delta_{+}$ except $(-1,-1)$. 

Since the definition of a bicomodule configuration refers to the simplicial objects appearing as rows and columns of that grid, we need to introduce a way to extract those in our set up.

\begin{notation}
Let $[n]\in \Delta$ a linearly ordered set. We denote by $\delta^{n}_{0},\delta^{n}_{1}:[n]\rightarrow [1]$ the morphism that is constant on $0$ (resp. $1$). We further denote $\epsilon^{n}_{0}\coloneq \delta^{n}_{0}\ast \id_{[1]}$ and $\epsilon^{n}_{1}\coloneq \id_{[1]}\ast \delta^{n}_{1}$. We extend this to the case $n=-1$ by defining $\epsilon_{1}^{-1}\coloneq \delta_{0}$, $\epsilon_{1}^{-1}\coloneq \delta_{0}$.
\end{notation}

\begin{construction}\label{const:pullback}
For every map $[n]\in \Delta$ there exist a canonical inclusion $c_{n}:\Delta\hookrightarrow \Delta_{/[1]}$ that map an object $[m]\in \Delta$ to the morphism $\epsilon_{0}^{m}\ast\delta^{n}_{1}:[n+m]\to [1]$. Dually there exists a functor $r_{n}:\Delta\rightarrow \Delta_{/[1]}$ that maps $[m]\in \Delta$ to the map $\delta^{n}_{0}\ast\epsilon_{1}^{m}:[n+m]\rightarrow [1]$. We further denote by $c_{-1}:\Delta\rightarrow \Delta_{/[1]}$ the functor that maps $[n]$ to $\delta_{0}^{n}$ and by $r_{-1}:\Delta\rightarrow \Delta_{/[1]}$ the functor that maps $[n]$ to $\delta_{1}^{n}$.

We denote the corresponding restriction functors by
\[
C_{n},R_{n}: \Fun(\Delta_{/[1]}^{\op},\cC) \to \Fun(\Delta_{+}^{\op},\cC)
\]
 Note that under the fully faithful functor $j$ the functor $C_{n}$ and $R_{n}$ precisely correspond to the projection onto the $n$-th row and column.
\end{construction}

\begin{proof}[Proof of Propostion~\ref{prop:comparison}]
    Let $M_{\bullet}$ be birelative simplicial object. We compare the conditions for different choices of $f\in\Delta_{/[1]}$. If $f$ is constant at $0$ or $1$ the birelative $2$-Segal conditions for $f$ are equivalent to the requirement that simplicial objects $C_{-1}(M_{\bullet})$ and $R_{-1}(M_{\bullet})$ denoted $X_{\bullet}$ and $Y_{\bullet}$ in \cite[Thm.2.4.1]{carlier2020incidence} are $2$-Segal.
    
    For $f:[n]\to [1]\in \Delta_{/[1]}$ that is not constant, we need to distinguish different cases for the parameters $0\leq i <j \leq n$ appearing in the definition of the birelative $2$-Segal conditions:
    \begin{itemize}
        \item if $f(i)=f(j)=0$ then the birelative $2$-Segal conditions are equivalent to active equifiberedness of the map $C_{0}(M_{\bullet})\rightarrow C_{-1}(M_{\bullet})$ if $f(i)=1$ only for $i=n$ and to the $1$-Segal conditions for $C_{n}(M_{\bullet})$ else. 
         
        \item if $f(i)=f(j)=1$ then the birelative $2$-Segal conditions are equivalent to active equifiberedness of the map $R_{0}(M_{\bullet})\rightarrow R_{-1}(M_{\bullet})$ if $f(i)=0$ only for $i=0$ and to the Segal conditions for $R_{n}(M_{\bullet})$ else. 
        
        \item if $f(i)=0$ and $f(j)=1$ then it follows from a pasting argument that these are equivalent to being stable in the sense of \cite[Sect.2.2]{carlier2020incidence}
    \end{itemize}
    Since this compares all possible birelative $2$-Segal conditions to the conditions of a bicomodule configuration, this finishes the proof.
\end{proof}

After the first version of this text appeared, Kock and Mikhail have described a notion of morphism between bicomodule configurations in \cite[Rem.3.2.4]{kock2025abacus}. Our last goal in this section is to show that their notion can be recovered as an example of our notion of birelative $2$-Segal span. 

Let $(s_{\bullet},t_{\bullet}):M_{\bullet}\rightarrow X_{\bullet}\times Y_{\bullet}$ be a birelative $2$-Segal span s.t. the morphisms $C_{-1}s_{\bullet},R_{-1}s_{\bullet}$ and $C_{-1}t_{\bullet}, R_{-1}s_{\bullet}$ are equivalences. Under the equivalence of birelative $2$-Segal spans with bimodule morphisms established in Theorem~\ref{thm:birseg}, these correspond to bimodule morphisms that restrict to the identity map between algebras. 

\begin{definition}
    A map of simplicial objects $\pi_{\bullet}:X_{\bullet}\rightarrow Y_{\bullet}$ is called a \emph{right} (resp. \emph{left}) \emph{fibration}, if for all $n\geq 1$ the diagram 
    \[
    \begin{tikzcd}
        X_{n} \arrow[r] \arrow[d,swap,"\pi_{n}"]& X_{\{n\}} \arrow[d,"\pi_{n-1}"] \\
        Y_{n} \arrow[r,swap]& Y_{\{n\}}    
    \end{tikzcd} \quad \text{resp.} \quad \begin{tikzcd}
        X_{n} \arrow[r] \arrow[d,swap,"\pi_{n}"]& X_{\{0\}} \arrow[d,"\pi_{n-1}"] \\
        Y_{n} \arrow[r,swap]& Y_{\{0\}} 
    \end{tikzcd}
    \]
    is Cartesian.
\end{definition}
\begin{prop}
     Let $(s_{\bullet},t_{\bullet}):M_{\bullet}\rightarrow X_{\bullet}\times Y_{\bullet}$ be a span between birelative $2$-Segal objects, s.t. $C_{-1}s_{\bullet},R_{-1}s_{\bullet}$ and $C_{-1}t_{\bullet}, R_{-1}s_{\bullet}$ are equivalences. Then the following are equivalent:
    \begin{itemize}
        \item[(1)] $(s_{\bullet},t_{\bullet})$ is a birelative $2$-Segal span.
        \item[(2)] for every $n\geq 0$ the morphisms $R_{n}s_{\bullet}$, $C_{n}t_{\bullet}$ are right and the morphisms $R_{n}t_{\bullet}$, $C_{n}s_{\bullet}$ are left fibrations.
    \end{itemize}
\end{prop}
\begin{proof}
    It is easy to see that $(1)$ implies $(2)$. For the other direction, let $f:[n]\rightarrow [1]$ be a morphism with $f(0)=0$ and $f(n)=n$ and denote by $i\in [n]$ the unique element, with $f(i)=0$ and $f(i+1)=1$. Note that in case that $C_{-1}s_{\bullet},R_{-1}s_{\bullet}$ and $C_{-1}t_{\bullet}, R_{-1}s_{\bullet}$ are equivalences, the birelative $2$-Segal conditions on $s_{\bullet}$ and $t_{\bullet}$ reduce to the claim that for every $f:[n]\rightarrow [1]$ with $f(0)=0$ and $f(n)=1$ the diagrams
\[
\begin{tikzcd}
    M_{f} \arrow[r,"s_{f}"] \arrow[d] & X_{f}\arrow[d]  \\
    M_{f\vert_{0,n}} \arrow[r] & X_{f\vert_{0,n}}
\end{tikzcd} 
\quad \text{and} \quad
\begin{tikzcd}
      M_{f} \arrow[r,"t_{f}"] \arrow[d] & Y_{f}\arrow[d]  \\
    M_{f\vert_{i,i+1}} \arrow[r] & Y_{f\vert_{i,i+1}}
\end{tikzcd}
\]
are pullback squares. We analyze the first one, because the second one is analogous. To this end, we decompose it into a rectangle
\[
\begin{tikzcd}
    M_{f} \arrow[r,"s_{f}"] \arrow[d] & X_{f}\arrow[d]  \\
    M_{f\vert_{0,\dots,i,n}} \arrow[r] \arrow[d] & X_{f\vert_{0,\dots,i,n}} \arrow[d] \\
    M_{f\vert_{0,n}} \arrow[r] & X_{f\vert_{0,n}}
\end{tikzcd} 
\]
Note that by Definition of the functors $C_{n}$ and $R_{n}$ the top and bottom rectangles are given by 
\[
\begin{tikzcd}
    R_{i}(M_{\bullet}) (\{i+1,\dots,n\}) \arrow[r] \arrow[d] &  R_{i}(X_{\bullet})(\{i+1,\dots,n\}) \arrow[d] \\
     R_{i}(M_{\bullet})(\{n\})\arrow[r] &  R_{i}(X_{\bullet})(\{n\})  
\end{tikzcd}
\]
and 
\[
\begin{tikzcd}
    C_{0}(M_{\bullet})(\{0,\dots,i\}) \arrow[r] \arrow[d] &  C_{0}(X_{\bullet})(\{0,\dots,i\}) \arrow[d] \\
     C_{0}(M_{\bullet})(\{0\})\arrow[r] &  C_{0}(X_{\bullet})(\{0\})
\end{tikzcd}
\]
which are pullback by assumption $(2)$. Hence, the claim follows from the pasting law for pullbacks. 
\end{proof}

\section{Segal objects from K-theory}\label{sect:examples}

Our main goal in this section is to provide examples of the various types of $2$-Segal conditions introduced in the last section. The fundamental example of a $2$-Segal space is the Waldhausen $S_{\bullet}$-construction\footnote{See Definition~\ref{def:Waldhausen}} of an exact $\infty$-category, whose space of $n$-simplices $S_{n}(\c{C})$ can be described as the space of length $n$-flags in $\c{C}$. To show that this simplicial space satisfies the $2$-Segal conditions one has to compare different pasting of flags. The reason that these are equivalent is a consequence of the third isomorphism theorem that holds in any exact $\infty$-category $\cC$. It is therefore natural to expect examples of other types of $2$-Segal objects to arise from similar constructions with exact $\infty$-categories. 

Indeed, as our first example, we consider exact functors $F:\c{C}\rightarrow \c{D}$ between exact $\infty$-categories. Every such induces a morphism of simplicial spaces $S_{\bullet}(F):S_{\bullet}(\c{C}) \rightarrow S_{\bullet}(\c{D})$. We show that under the assumptions of Proposition~\ref{prop:exactfunctor}, this morphism gives rise to examples of active equifibered and relative Segal morphisms. 

Moreover, we also present a different construction of an active equifibered and relative Segal morphism from an exact functor $F$. Instead of a morphism between $2$-Segal spaces, we construct from $F:\c{C}\rightarrow \c{D}$ a $2$-Segal space $S^{\rel}_{\bullet}(F)$ itself, called the relative $S_{\bullet}$-construction \cite{waldhausen}. This $2$-Segal space fits into a sequence of simplicial spaces
\begin{equation}\label{equ:recollement}
\c{D}_{\bullet} \xrightarrow{\iota_{\bullet}} S^{\rel}_{\bullet}(F) \xrightarrow{\pi_{\bullet}} S_{\bullet}(\c{C})
\end{equation}
that is important in algebraic K-theory since it induces the long exact sequence of relative K-theory \cite{waldhausen}.
After recalling the definition of the Waldhausen and relative Waldhausen construction in the setting of exact $\infty$-categories \cite{barwick2015exact}, we show that the morphisms $\iota_{\bullet}$ and $\pi_{\bullet}$ are examples of a relative Segal and an active equifibered morphism respectively. 

These constructions admit applications to the theory of Hall algebras. Indeed, every such morphism induces a (co)algebra morphism between the corresponding Hall algebras. In particular, the morphism induced by the relative $S_{\bullet}$-construction has been applied for the construction of derived Hecke actions \cite[Sect.5]{kapranov2019cohomological}. This is especially advantageous in the realm of  $\infty$-categories, where constructing homotopy coherent algebra morphisms can be challenging.   

This discussion admits an analog for birelative simplicial spaces in the setting of hermitian K-theory. For exact $1$-categories with exact duality $(\c{C},\sf{D}_{\c{C}})$ it has been proven by Young \cite[Thm.3.6]{young2018relative} that the hermitian $\c{R}_{\bullet}$-construction of $(\c{C},\sf{D}_{\c{C}})$ describes a relative $2$-Segal space over $S_{\bullet}(\c{C})$. Young uses this observation to construct representations of Hall algebras and to apply it in orientifold Donaldson\textendash Thomas-theory \cite{young2020representations}. 

In Section~\ref{subsect:relbiseg}, we extend Young's result to the setting of exact $\infty$-categories with duality. Our proof is inspired by the construction of the real Waldhausen $S_{\bullet}$-construction as presented in \cite{heine2019real}. For an exact $\infty$-category $\c{C}$ with duality functor $\sf{D}_{\c{C}}$ the relative $2$-Segal space from Example~\ref{example:twisted}
\begin{equation}\label{eq:twisted}
\Tw(S_{\bullet}(\c{C})) \rightarrow S_{\bullet}(\c{C}) \times S_{\bullet}(\c{C})^{\rev}
\end{equation}
admits an extension to a morphism of simplicial spaces with $\sf{C}_{2}$-action. For $n=1$, we can explicitly describe the $\sf{C}_{2}$-action by the commutative diagram:
\[
\begin{tikzpicture}
    
    \node (A) at (0,0) {\begin{tikzcd}
        C_{0,1} \arrow[r] \arrow[d] & C_{0,2} \arrow[d] \\
        0 \arrow[r] & C_{1,2} 
    \end{tikzcd}};
    \node (B) at (6,0) {\begin{tikzcd}
        \sf{D}_{\c{C}}(C_{1,2}) \arrow[r] \arrow[d] & \sf{D}_{\c{C}}(C_{0,2}) \arrow[d] \\
        0 \arrow[r] & \sf{D}_{\c{C}}(C_{0,1}) 
    \end{tikzcd}};
    \node (C) at (0,-2) {$(C_{0,1},C_{1,2})$};
    \node (D) at (6,-2) {$(\sf{D}_{\c{C}}(C_{1,2}), \sf{D}_{\c{C}}(C_{0,1}))$};
    \draw[|->] (A) to (B);
    \draw[|->] (A) to (C);
    \draw[|->] (B) to (D);
    \draw[|->] (1.5,-2) to (4,-2);
\end{tikzpicture}
\]
The hermitian $\c{R}_{\bullet}$-construction then arises as the simplicial space of homotopy fixed points. Furthermore, the induced morphism $\c{R}_{\bullet}(\c{C})\rightarrow S_{\bullet}(\c{C})$ is a relative $2$-Segal space. Using this method we finally construct examples of active equifibered and relative Segal maps between relative $2$-Segal objects from exact duality preserving functors $F:\c{C}\rightarrow \c{D}$.

These results can be applied to the construction of $\infty$-categorical Hall algebra representations and their morphisms. In particular, these constructions are essential for the construction of representations of categorified Hall algebras \cite{porta,diaconescu2022cohomological}.

\subsection{2-Segal Spans from Exact \texorpdfstring{$\infty$}{inf}-Functors}\label{subsect:relseg}

 In this subsection, we construct examples of active equifibered and relative Segal morphisms between Waldhausen $S_{\bullet}$-construction. For the reader's convenience, we first recall the construction of the Waldhausen $S_{\bullet}$-construction of an exact $\infty$-categories.

An \emph{exact $\infty$-category} consists of a triple $(\c{C},\c{C}^{\ingr},\c{C}^{\egr})$ of an additive $\infty$-category $\c{C}$ and two wide Waldhausen subcategories $\c{C}^{\ingr}$ and $\c{C}^{\egr}$ satisfying a list of compatibility conditions \cite[Def.3.1]{barwick2015exact}. We call the morphism in $\c{C}^{\ingr}$ \emph{ingressive} denoted $\rightarrowtail$ and the morphism in $\c{C}^{\egr}$ \emph{egressive} denoted $\twoheadrightarrow$. For an exact $\infty$-category $\c{C}$, a bicartesian square
\[
\begin{tikzcd}
    C_{1} \arrow[r, rightarrowtail ]\arrow[d, twoheadrightarrow]& C_{2} \arrow[d, twoheadrightarrow] \\
    0 \arrow[r, rightarrowtail] & C_{3}
\end{tikzcd}
\]
in $\c{C}$ is called an \emph{extension square}. A functor 
\[
F:(\c{C},\c{C}^{\ingr},\c{C}^{\egr}) \rightarrow \exact{D}
\]
between exact $\infty$-categories is called an \emph{exact $\infty$-functor} \cite[Def.4.1]{barwick2015exact} if it preserves ingressive and egressive morphisms, zero objects, and extension squares. We denote by $\sf{Exact}_{\infty}$ the subcategory of $\Fun(\Lambda_{2}^{2},\Cat)$ spanned by exact $\infty$-categories and exact functors. 
Consider for every $n\geq 0$ the poset $[n]$ as an $\infty$-category and let $\mathsf{Ar}([n]):=\Fun([1],[n])$ be the associated $\infty$-category of arrows. These assemble into a cosimplicial $\infty$-category $\mathsf{Ar}([-]): \Delta \rightarrow \Cat$.

\begin{definition}\label{def:Waldhausen}
     Let $(\c{C},\c{C}^{\ingr},\c{C}^{\egr})$ be an exact $\infty$-category. We denote by $S_{n}(\c{C},\c{C}^{\ingr},\c{C}^{\egr})\subset \sf{Map}(\mathsf{Ar}([n]),\c{C})$ the full subspace spanned by those functors $F:\mathsf{Ar}([n])\rightarrow \c{C}$ that satisfy the following conditions:
     \begin{itemize}
         \item[(1)] for every $0 \leq i \leq n$ we have $F(i,i)=0$.
         \item[(2)] for every $0 \leq i \leq k \leq j \leq n$ the morphism $F(i,j) \rightarrowtail F(k,j)$ is ingressive.
         \item[(3)] for every $0 \leq i \leq j \leq l \leq n$ the morphism $F(i,j) \twoheadrightarrow F(i,l)$ is egressive
         \item[(4)] for every $0 \leq i \leq k \leq j \leq l \leq n$ the square 
         \[
        \begin{tikzcd}
        F(i,j) \arrow[r, rightarrowtail ]\arrow[d, twoheadrightarrow]& F(k,j) \arrow[d, twoheadrightarrow] \\
        F(i,l) \arrow[r, rightarrowtail] & F(k,l)
        \end{tikzcd}
        \]
         is bicartesian.
     \end{itemize}
     When $n$ varies, the spaces $S_{n}(\c{C},\c{C}^{\ingr},\c{C}^{\egr})$ assemble into a simplicial space called the \textit{Waldhausen $S_{\bullet}$-construction}. When the classes of ingressive and egressive morphisms are clear from the context, we abuse notation and denote the Waldhausen $S_{\bullet}$-construction by $S_{\bullet}(\c{C})$. For completeness, we include a proof of the following statement. The original proof in the more general context of proto-exact $\infty$-categories is given in \cite[Thm.7.3.3]{dyckerhoff2019higher}. 
\end{definition}
\begin{prop}\cite[Thm.7.3.3]{dyckerhoff2019higher}
    Let $(\c{C},\c{C}^{\ingr},\c{C}^{\egr})$ be an exact $\infty$-category. The simplicial space $S_{\bullet}(\c{C})$ is $2$-Segal.
\end{prop}
\begin{proof} 
    By the path space criterion \cite[Thm.6.3.2]{dyckerhoff2019higher} it suffices to show that the two path spaces $(P^{\tri}S)_{\bullet}$ and $(P^{\tle}S)_{\bullet}$ are $1$-Segal.
    It follows from \cite[Lem.1.2.2.4.]{lurie2017higher} that for every $n\geq 0$ the projection maps 
    \[
   \Map_{\Cat}([n-1],\c{C}^{\egr}) \xleftarrow{p^{v}} S_{n}(\c{C})\xrightarrow{p_{h}} \Map_{\Cat}([n-1],\c{C}^{\ingr})
    \]
    that are induced by restricting to the subposet
    \[
    \{(0,n) < (1,n) < ... < (n-1,n)\} \hookrightarrow \mathsf{Ar}([n]) \hookleftarrow \{(0,1) < (0,2) < ... < (0,n)\}
    \]
    are equivalences. Through these equivalences, the $1$-Segal conditions for the two path spaces translate into the conditions that for every $n\geq 2$, the morphisms
    \[
       \Map_{\Cat}([n],\c{C}^{\egr}) \rightarrow \Map_{\Cat}([1]\amalg_{[0]}\dots \amalg_{[0]}[1],\c{C}^{\egr}) 
    \]
    and 
     \[
       \Map_{\Cat}([n],\c{C}^{\ingr}) \rightarrow \Map_{\Cat}([1]\amalg_{[0]}\dots \amalg_{[0]}[1],\c{C}^{\ingr}) 
    \]
    are equivalences. But this condition is satisfied, since the functor $[1]\amalg_{[0]}\dots\amalg_{[0]}[1]\rightarrow [n]$ is an equivalence of $\infty$-    categories.
\end{proof}

Let $(\c{C},\c{C}^{\ingr}, \c{C}^{\egr})$ and $(\c{D},\cD^{\ingr},\cD^{\egr})$ be exact $\infty$-categories and $F:(\c{C},\cC^{\ingr}, \cC^{\egr}) \rightarrow \exact{D}$ be an exact $\infty$-functor. It follows from the construction of the Waldhausen construction that $F$ induces a functor $S_{\bullet}(F):S_{\bullet}(\c{C}) \rightarrow S_{\bullet}(\c{D})$ between the corresponding Waldhausen constructions. We analyze the conditions under which this morphism is active equifibered (resp. relative Segal). To do so, we need to introduce some terminology:
\begin{definition}
    Let $\exact{C}$ be an exact $\infty$-category and let $\c{C}_{0} \subset \c{C}$ be a full exact\footnote{A subcategory $\cC_{0}\subset \cC$ is called exact, if the triple $(\cC_{0},\cC^{\ingr}\cap \cC_{0},\cC^{\egr}\cap\cC_{0})$ is an exact $\infty$-category.} subcategory. We call $\c{C}_{0}$ 
    \begin{itemize}
        \item[(1)] \textit{extension closed} if every exact-sequence $\sigma:\Delta^{1}\times \Delta^{1}\rightarrow \c{C}$
    \[
    \begin{tikzcd}
    C_{1} \arrow[r, rightarrowtail ]\arrow[d, twoheadrightarrow]& C_{2} \arrow[d, twoheadrightarrow] \\
    0 \arrow[r, rightarrowtail] & C_{3}
    \end{tikzcd}
    \]
    in $\c{C}$ with $C_{1},C_{3} \in \c{C}_{0}$, factors through $\c{C}_{0}$.
    \item[(2)] \textit{closed under quotients (resp. subobjects)} if for every egressive morphism $C_{2}\twoheadrightarrow C_{3}$ (resp. ingressive morphism $C_{1}\rightarrowtail C_{2}$) in $\c{C}$ with $C_{2} \in \c{C}_{0}$ also $C_{3}\in \c{C}_{0}$ (resp. $C_{1}\in \c{C}_{0}$)
    \end{itemize}
\end{definition}

We can then prove the following:
\begin{prop}\label{prop:exactfunctor}
Let $\exact{C}$ and $\exact{D}$ be exact $\infty$-categories and $F:\exact{C} \rightarrow \exact{D}$ be a fully faithful exact $\infty$-functor. The induced morphism $S_{\bullet}(F)$ of simplicial spaces is 
\begin{itemize}
    \item[(1)] active equifibered if and only if the essential image of $F$ is closed under quotients and subobjects.
    \item[(2)] relative Segal if and only if the essential image of $F$ is extension closed.
\end{itemize}
\end{prop}
\begin{proof}
    To show $(1)$, we need to show that for every $n\geq 1$ the morphism  
    \[
    \eta_{1}:\Map_{\Cat}([n-1],\cC^{\ingr}) \rightarrow \Map_{\Cat}([n-1],\c{D}^{\ingr})\times_{\c{D}^{\simeq}} \c{C}^{\simeq}
    \]
    is an equivalence of spaces. It follows from the $2$-out-of-$3$ property for fully faithful functors that $\eta_{1}$ is fully faithful. To prove the claim, it suffices to show that $\eta_{1}$ is essentially surjective. But this follows since $\c{C}\subset \c{D}$ is closed under subobjects.

    The proof of $(2)$ is similar. We need to show that for every $n\geq 1$ the functor
    \[
    \eta_{2}:\Map_{\Cat}([n-1],\cC^{\ingr}) \rightarrow \Map_{\ingr}([n-1],\c{D}^{\ingr})\times_{\c{D}^{\simeq}\times \dots \times \c{D}^{\simeq}} (\c{C}^{\simeq}\times \dots \times \c{C}^{\simeq})
    \]
    is an equivalence. It follows again from the $2$-out-of-$3$ property for fully faithful functors that this functor is fully faithful. Moreover, since the essential image is closed under extensions, it is easy to see that the functor is also essentially surjective.
\end{proof}
\begin{remark}\label{rem:Hallalgebras}
    Let $(\cC,\cC^{\ingr},\cC^{\egr})$ be an exact $\infty$-category, and $\cC_{0}\subset \cC$ be full exact subcategories. The inclusion functor
    \[
    i:\cC_{0}\rightarrow \cC
    \]
    induces a $\bK$-linear map between the corresponding vector spaces of groupoid functions 
    \[
    i_{\ast}:\Hom_{\Set}(\pi_{0}(\cC^{\simeq}_{0}),\bK) \rightarrow \Hom_{\Set}(\pi_{0}(\cC^{\simeq}),\bK)
    \]
    that maps the constant function $[C]_{\cC_{0}}$ for $C\in \cC_{0}$ to the corresponding constant function $[C]_{\cC}$ in $\cC$. If $\cC_{0}$ is closed under extensions (resp. under quotients and subobjects), then, according to the definition, this map naturally extends to a morphism between the corresponding Hall algebras \cite{ringel1990hall} (resp. Hall coalgebras). These classes of morphisms are precisely the ones described by the above class of active equifibered and relative Segal morphisms.
\end{remark}

As explained in the introduction of this section, we can also construct different examples of active equifibered and relative Segal maps, using the Sequence~\eqref{equ:recollement}. We therefore first recall all the necessary ingredients for this construction. Let $F:\exact{C} \rightarrow \exact{D}$ be an exact functor between exact $\infty$-categories. Following \cite{dyckerhoff2024spherical}, we define the \textit{relative $S_{\bullet}$-construction} of $F$ as the simplicial space denoted $S_{\bullet}^{\rel}(F)$ whose $\infty$-category of $n$-simplices is defined as the pullback
\[
\begin{tikzcd}
    S_{n}^{\rel}(F)\arrow[r] \arrow[d] & S_{n+1}(\c{D}) \arrow[d,"\del_{n+1}"]\\
    S_{n}(\c{C}) \arrow[r, "F"] & S_{n}(\c{D})
\end{tikzcd}
\]
Alternatively, we can also describe $S_{n}^{\rel}(F)$ as the full subspace of the space of sections of the Grothendieck construction of the functor
\[
\Fun([n-1],\cC^{\ingr}) \xrightarrow{\ev_{n-1}} \c{C} \xrightarrow{F} \c{D}
\]
generated by the ingressive sections. Since the full subcategory of $\Fun(\Delta^{\op},\c{C})$ generated by $2$-Segal objects is closed under limits, it follows that $S_{\bullet}^{\rel}(F)$ is itself $2$-Segal. The simplicial space $S_{\bullet}^{\rel}(F)$ sits in a sequence of simplicial spaces
\[
\c{D}_{\bullet} \xrightarrow{\iota_{\bullet}} S^{\rel}_{\bullet}(F) \xrightarrow{\pi_{\bullet}} S_{\bullet}(\c{C}),
\]
where the maps $\iota_{n}:\c{D} \rightarrow S_{n}(F)$ and $\pi_{n}:S_{n}(F) \rightarrow S_{n}(\c{C})$ are given by
\[
d \mapsto (0 \rightarrowtail ... \rightarrowtail 0 \rightarrowtail b)
\]
and
\[
(a_{0} \rightarrowtail ... \rightarrowtail ... \rightarrowtail a_{n-1} \rightarrowtail b) \mapsto (a_{0} \rightarrowtail ... \rightarrowtail a_{n-1})
\]
respectively. These maps have the following properties:
\begin{prop}\label{prop:relativeK}
    Let $F:\c{C} \rightarrow \c{D}$ be an exact functor between exact $\infty$-categories. Then:
    \begin{itemize}
        \item[(1)] the morphism $\pi_{\bullet}$ is active equifibered.
       \item[(2)] the morphism $\iota_{\bullet}$ is relative Segal.
    \end{itemize}
\end{prop}
\begin{proof}
Note that $S^{\sf{rel}}_{n}(F)$ can be written as the pullback
\[
    \begin{tikzpicture}[auto]
    \node (A) at (0,-3) {$\Map_{\Cat}([1],\cD^{\ingr})$} ;
    \node (B) at (3,-3) {$\c{D}^{\simeq}$} ;
    \node (E) at (0,0){$S_{n}^{\rel}(F)$};
    \node (F) at (3,0){$\Map_{\Cat}([n-1],\cC^{\ingr})$};
    \node (G) at (0,-1.5){$S_{1}^{rel}(F)$};
    \node (H) at (3,-1.5){$\c{C}^{\simeq}$};

    \draw (0.3,-0.5) -- (0.5,-0.5) -- (0.5,-0.3);
     \draw (0.3,-2) -- (0.5,-2) -- (0.5,-1.8);
     \draw[->] (E) to  (F);
    \draw[->] (E)  to (G);
    \draw[->] (F) to node[scale=0.7] {$\ev_{0}$} (H);
    \draw[->] (G) to  (H);
    \draw[->] (G) to (A);
    \draw[->] (A) to (B);
    \draw[->] (H) to (B);
    
\end{tikzpicture}
\]
of the outer rectangle. To show the claim for $\pi_{\bullet}$, we need to show that the upper diagram is a pullback as well. But this follows from the pasting law for pullbacks. For $(2)$ note that the pullback of spaces
\[
\c{D}^{\simeq} \times_{S_{1}^{\rel}(F)\times_{S_{0}^{\rel}(F)}\dots \times_{S_{0}^{\rel}(F)}S_{1}^{\rel}(F)} S_{n}^{\rel}(F) 
\]
can be identified with the full subspace of $S_{n}^{\rel}(F) \simeq S_{n}(\c{C})\times_{S_{n}(\c{D})} S_{n+1}(\c{D})$ generated by those objects $(F_{\c{C}}.G_{\c{D}},\alpha)$, s.t. 
\begin{itemize}
    \item[(1)] the restriction of $G_{\c{D}}$ to the subposet $\{(0,n)< ... < (n-1,n)\}$ is constant 
    \item[(2)] the restriction of $F_{\c{C}}$ to the subposet $\{(0,1)<(1,2)<...<(n-1,n)\}$ is constant at $0$
\end{itemize}
It then follows from an iterated Kan extension argument similar to \cite[Lem.1.2.2.4]{lurie2017higher} that this subspace is equivalent to $\c{D}^{\simeq}$.
\end{proof}

As an application, we use the above results for the construction of the so-called derived Hecke actions from \cite{kapranov2019cohomological}.
\begin{prop}\label{prop:rel2segobje}
    Let $\c{C}$ be an $\infty$-category with finite limits, and let $p:Y_{\bullet} \rightarrow X_{\bullet}$ be a relative $2$-Segal object. Let further, $f:X_{\bullet} \rightarrow Z_{\bullet}$ be an active equifibered, and $g:W_{\bullet} \rightarrow X_{\bullet}$ be a relative Segal morphism. Then
    \begin{itemize}
        \item[(1)] the composite $ f\circ p:Y_{\bullet} \rightarrow Z_{\bullet}$ is a relative $2$-Segal object.
        \item[(2)] the pullback $\pi:Y_{\bullet} \times_{X_{\bullet}} W_{\bullet}\rightarrow W_{\bullet}$ is a relative $2$-Segal object.       
    \end{itemize}
\end{prop}
\begin{proof}
    Since $Y_{\bullet}$ is $1$-Segal by definition, for the first case we only need to show that the composite $f\circ p$ is active equifibered. But this follows since active equifibered morphisms are closed under composition.
    
    For the second case, note that it follows from the proof of Lemma~\ref{lem:2segcomp} that the pullback of an active equifibered morphism along a relative Segal morphism is active equifibered. We therefore only need to show that $Y_{\bullet} \times_{X_{\bullet}} W_{\bullet}$ is $1$-Segal. Consider the morphisms:
    \[
    Y_{\bullet} \times_{X_{\bullet}} W_{\bullet} \rightarrow Y_{\bullet} \rightarrow \ast_{\bullet}.
    \]
    Since both of them are relative Segal, their composite is so. In particular, it follows that $Y_{\bullet} \times_{X_{\bullet}} W_{\bullet}$ is $1$-Segal.
\end{proof}

As a consequence, we obtain:
\begin{cor}\label{cor:1-SegWald}
    Let $F:\exact{C} \rightarrow \exact{D}$ be an exact fully faithful functor between exact $\infty$-categories whose essential image is extension closed. Then the simplicial space $S_{\bullet}(F)$ is $1$-Segal.
\end{cor}
\begin{proof}
    This follows from an application of Proposition~\ref{prop:rel2segobje} to the pullback diagram that defines $S_{\bullet}(F)$.
\end{proof}
\begin{example}
    Let $F:\exact{C}\rightarrow \exact{D}$ be a fully faithful exact functor between exact $\infty$-categories whose essential image is closed under extensions. Then the morphism
    \[
    S_{\bullet}(F) \xrightarrow{} S_{\bullet}(\c{C})
    \]
    defines a relative $2$-Segal object by Corollary~\ref{cor:1-SegWald}. Unraveling the definitions, this equips $\c{D}$ with the structure of an $S_{\bullet}(\c{C})$-module with action given by
    \[
    \begin{tikzcd}
        & S_{1}(F) \arrow[dr] \arrow[dl] & \\
        \c{C} \times \c{D} & & \c{D}
    \end{tikzcd}
    \]
    This module structure is responsible for the so-called \emph{derived Heck actions} as defined in \cite[Sect.5.2]{kapranov2019cohomological}. Indeed, the analysis of \cite[Sect.8]{dyckerhoff2018triangulated} for the theory with transfer given by Borel\textendash Moore homology applied to the above module structure recovers their derived Hecke actions. Furthermore, it can be used to extend their derived Hecke action to the categorified Hall algebra of \cite{porta,diaconescu2022cohomological}.
\end{example}

\subsection{Relative 2-Segal Spans from Duality preserving \texorpdfstring{$\infty$}{inf}-Functors}\label{subsect:relbiseg}

Our goal in this section is to construct examples of relative $2$-Segal spaces and relative $2$-Segal spans. to this end, we present a general construction of relative $2$-Segal objects from $2$-Segal spaces with duality. For this construction, we first have to recall some facts about $\infty$-categories with duality. Since we only work with simplicial objects in $\cS$, we drop it from the notation.
\begin{definition}
    Let $C\in \cC$ be an object in an $\infty$-category. A \emph{$\sf{C}_{2}$-action} on $C$ is a functor $\sf{BC}_{2}\rightarrow \cC$ that maps the unique point to $C$.
\end{definition}
 Recall that the category $\Delta$ admits a canonical $\sf{C}_{2}$-action. This action maps an object $[n]$ to $[n]$ and a morphism $f:[n]\rightarrow [m]$ to the composite morphism
\[
[n]\simeq [n]^{op} \xrightarrow{f^{op}} [m]^{op}\simeq [m]
\]
where the equivalence $[n] \simeq [n]^{op}$ is given by $i\mapsto n-i$. By functoriality of taking presheaves, the $\sf{C}_{2}$-action on $\Delta$ induces one on $\c{P}(\Delta)$ that maps a simplicial space $X_{\bullet}$ to $X_{\bullet}^{\rev}$. The restriction of this action to the full subcategory $\Cat\subset \c{P}(\Delta)$ maps an $\infty$-category to its opposite. 
\begin{definition}
    A \textit{simplicial space with duality} is a homotopy fixed point with respect to the above $\sf{C}_{2}$-action on $\c{P}(\Delta)$.  
    Similarly, an \textit{$\infty$-category with duality} is a homotopy fixed point with respect to the $\sf{C}_{2}$-action on $\Cat$ given by taking opposites.
\end{definition}

 More precisely an $\infty$-category with duality is a section of the cocartesian fibration $\widetilde{\sf{Cat}}_{\infty} \rightarrow \sf{BC}_{2}$ encoding the $\sf{C}_{2}$-action on $\Cat$. We define the $\infty$-category $\sf{Cat}_{\infty}^{\sf{C}_{2}}$ of $\infty$-categories with duality and the $\infty$-category of simplicial spaces with duality $\c{P}(\Delta)^{\sf{C}_{2}}$ as the respective $\infty$-categories of homotopy fixed points. 

\begin{remark}
    The $\infty$-categories $\Delta$, $\c{P}(\Delta)$ and $\Cat$ also admit a trivial $\sf{C}_{2}$-action. We denote the respective category of homotopy fixed points with respect to this trivial action by $\Delta[\sf{C}_{2}]$, $\c{P}(\Delta)[\sf{C}_{2}]$ and $\Cat[\sf{C}_{2}]$. Unraveling definitions, these are given by the $\infty$-categories of functors $\Cat[\sf{C}_{2}]\simeq \Fun(\sf{BC}_{2},\Cat)$. Hence, a homotopy fixed point with respect to this trivial action describes an object with $\sf{C}_{2}$-action.
\end{remark}

 Note that the non-trivial $\sf{C}_{2}$-action on $\c{P}(\Delta)$ restricts to a $\sf{C}_{2}$-action on $\operatorname{2-\Seg_{\Delta}}$. 
\begin{definition}
    A \textit{$2$-Segal space with duality} is a homotopy fixed point with respect to the non-trivial $\sf{C}_{2}$-action on $\operatorname{2-\Seg_{\Delta}}$. 
\end{definition}

We denote by $\operatorname{2-\Seg_{\Delta}^{\sf{C}_{2}}}$  the $\infty$-category of homotopy fixed points. By construction, this $\infty$-category comes equipped with a forgetful functor to $\operatorname{2-\Seg_{\Delta}}$. We call a morphism $f:X_{\bullet} \rightarrow Y_{\bullet}$ in $\operatorname{2-\Seg_{\Delta}^{\sf{C}_{2}}}$ active equifibered (resp. relative Segal) if its image in $\operatorname{2-\Seg_{\Delta}}$ is active equifibered (resp. relative Segal). Similarly, we call a span 
\[
\begin{tikzcd}
    & Y_{\bullet} \arrow[dr, "t_{\bullet}"] \arrow[dl, swap, "s_{\bullet}"] & \\
    X_{\bullet} & & Z_{\bullet}
\end{tikzcd}
\]
in $\operatorname{2-Seg_{\Delta}^{\sf{C}_{2}}}$ a \textit{$2$-Segal span with duality} if $s_{\bullet}$ is active equifibered and $t_{\bullet}$ is relative Segal. Since the $\infty$-category $\operatorname{2-\Seg_{\Delta}^{\sf{C}_{2}}}$ admits small limits, it makes sense to define:
\begin{definition}
    We define the $\infty$-category $\operatorname{2-Seg_{\Delta}^{\leftrightarrow,\sf{C}_{2}}}$ of \emph{$2$-Segal spaces with duality} as the subcategory of $\Span(\operatorname{2-Seg_{\Delta}^{\sf{C}_{2}}})$ with morphisms $2$-Segal spans with duality.
\end{definition}

 Similarly, note that the trivial $\sf{C}_{2}$-action on $\c{P}(\Delta)$ induces the trivial $\sf{C}_{2}$-action on $\operatorname{Rel2Seg_{\Delta}}$. 

\begin{definition}
    A \textit{relative $2$-Segal space with $\sf{C}_{2}$-action} is a homotopy fixed point with respect to the trivial $\sf{C}_{2}$-action on $\operatorname{Rel2Seg}$. We denote by $\operatorname{Rel2Seg_{\Delta}}[\sf{C}_{2}]$ the $\infty$-category of homotopy fixed points. 
\end{definition}
 As above the $\infty$-category $\operatorname{Rel2Seg}[\sf{C}_{2}]$ comes equipped with a forgetful functor to the $\infty$-category $\operatorname{Rel2Seg_{\Delta}}$. We define active equifibered and relative Segal morphisms as for $\operatorname{2-\Seg_{\Delta}^{\sf{C}_{2}}}$. The associated version of $2$-Segal span is called a \textit{relative $2$-Segal span with $\sf{C}_{2}$-action}.

\begin{definition}
    We define the \emph{$\infty$-category of relative $2$-Segal spaces with $\sf{C}_{2}$-action} $\operatorname{Rel2Seg_{\Delta}^{\leftrightarrow}}[\sf{C}_{2}]$ as the subcategory of $\Span(\operatorname{Rel2Seg_{\Delta}}[\sf{C}_{2}])$ with morphisms relative $2$-Segal spans with $\sf{C}_{2}$-action.
\end{definition}

 Our first goal in this section is the construction of a functor
 \[
 \operatorname{\Tw^{\leftrightarrow}(-)^{\sf{C}_{2}}}: \operatorname{2-Seg_{\Delta}^{\leftrightarrow,\sf{C}_{2}}}\rightarrow \operatorname{Rel2Seg_{\Delta}^{\leftrightarrow}},
 \]
 extending the functor $\Tw(-)$, that associates to every $2$-Segal space with duality a relative $2$-Segal space.
 Recall the definition of the edgewise subdivision functor $e:\Delta \rightarrow \Delta$ from Example~\ref{example:twisted}. We call a morphism in $\Cat[\sf{C}_{2}]$ a $\sf{C}_{2}$-equivariant functor. This functor becomes $\sf{C}_{2}$-equivariant if we equip the source with the trivial and the target with the non-trivial $\sf{C}_{2}$-action. By functoriality, this lifts to a $\sf{C}_{2}$-equivariant functor 
\[
\Tw := e^{\ast}:\c{P}(\Delta) \rightarrow \c{P}(\Delta)
\]
that induces a functor $\Tw: \c{P}(\Delta)^{\sf{C}_{2}}\rightarrow \c{P}(\Delta)[\sf{C}_{2}]$ on homotopy fixed points. The canonical inclusions $[n] \hookrightarrow [n]\ast [n]^{\op}$ and $[n]^{\op} \hookrightarrow [n]\ast [n]^{\op}$ induce a natural transformation 
\[
\Tw \Rightarrow id \times (-)^{op} : \c{P}(\Delta) \rightarrow \c{P}(\Delta)
\]
It follows from \cite[Lem.2.23]{heine2019real} that the natural transformation is $\sf{C}_{2}$-equivariant and hence induces a natural transformation 
\[
\Tw \Rightarrow id \times (-)^{\op}: \c{P}(\Delta)^{\sf{C}_{2}}\rightarrow \c{P}(\Delta)[\sf{C}_{2}]
\]
on homotopy fixed points. In total, we can interpret this construction as a functor
\[
\c{P}(\Delta)^{\sf{C}_{2}} \xrightarrow{\operatorname{\Tw^{\rightarrow}(-)}} \Fun(\Delta^{1},\c{P}(\Delta)[\sf{C}_{2}]) \xrightarrow{(-)^{\sf{C}_{2}}} \Fun(\Delta^{1},\c{P}(\Delta))
\]
that associates to a simplicial space with duality $X_{\bullet}$ the morphism
\[
\Tw(X)_{\bullet}^{\sf{C}_{2}} \rightarrow (X_{\bullet}\times X_{\bullet}^{\rev})^{\sf{C}_{2}}
\]
\begin{prop}
    Let $X_{\bullet} \in \c{P}(\Delta)^{\sf{C}_{2}}$ be a simplicial space with duality and let $X_{\bullet} \times X_{\bullet}^{\rev}$ be the induced simplicial space with $\sf{C}_{2}$-action. There exists an equivalence of simplicial spaces
    \[
    (X_{\bullet} \times X_{\bullet}^{\rev})^{\sf{C}_{2}} \simeq X_{\bullet}
    \]
\end{prop}
\begin{proof}
    For any simplicial space $X_{\bullet}$, we can construct a $\sf{C}_{2}$-action on $X_{\bullet}\times X_{\bullet}$ via right Kan extension
    \[
    \begin{tikzcd}
        \ast \arrow[r, "X_{\bullet}"] \arrow[d] & \c{P}(\Delta) \\
        \sf{BC}_{2} \arrow[ur,swap, "X_{\bullet}\times X_{\bullet}"] & 
    \end{tikzcd}
    \]
    By adjunction, the projection onto the first factor $X_{\bullet} \times X_{\bullet}^{\rev}\rightarrow X_{\bullet}$ induces a morphism of simplicial spaces with $\sf{C}_{2}$-action 
    \[
    \kappa: X_{\bullet} \times X_{\bullet}^{\rev} \rightarrow X_{\bullet} \times X_{\bullet}
    \]
    It follows from the construction that the functor underlying $\kappa$ is given by 
    \[
    \id_{X_{\bullet}} \times \mathsf{D}: X_{\bullet} \times X_{\bullet}^{\rev} \rightarrow X_{\bullet} \times X_{\bullet},
    \]
     where $\sf{D}$ denotes the duality on $X_{\bullet}$. Hence, it is an equivalence. We therefore obtain an equivalence on homotopy fixed points. The claim follows from the transitivity of right Kan extensions
\end{proof}

\begin{prop}\label{prop:twfunctor}
    The functor $\operatorname{\Tw^{\rightarrow}(-)}$ constructed above restricts to a functor
    \[
    \operatorname{\Tw^{\rightarrow}(-)}:\operatorname{2-Seg_{\Delta}^{\sf{C}_{2}}} \rightarrow \operatorname{Rel2Seg_{\Delta}}[\sf{C}_{2}]
    \]
    Furthermore, it preserves active equifibered and relative Segal morphisms.
\end{prop}
\begin{proof}
    It follows from Example~\ref{example:twisted} that the map 
    $\Tw(X)_{\bullet} \rightarrow X_{\bullet} \times X_{\bullet}^{\rev}$
    is a relative $2$-Segal object with $\sf{C}_{2}$-action. The second claim can be checked directly by looking at the corresponding pullback diagrams.
\end{proof}
\begin{prop}\label{prop:homotopyfunc}
    The functor $(-)^{\sf{C}_{2}}: \Fun(\Delta^{1},\c{P}(\Delta)[\sf{C}_{2}]) \rightarrow \Fun(\Delta^{1},\c{P}(\Delta))$
    restricts to a functor
    \[
   (-)^{\sf{C}_{2}}: \operatorname{Rel2Seg_{\Delta}}[\sf{C}_{2}] \rightarrow \operatorname{Rel2Seg_{\Delta}}
    \]
    Further, it preserves active equifibered and relative Segal morphisms.
\end{prop}
\begin{proof}
    The claim follows from the fact that the functor $\sf{lim}_{\sf{BC}_{2}}(-)$ preserves small limits.
\end{proof}

 Combining Proposition~\ref{prop:homotopyfunc} and \ref{prop:twfunctor} we can obtain our first goal of this section
\begin{prop}\label{prop:dualityfunct}
The composite $\infty$-functor
\[
\operatorname{2-Seg_{\Delta}^{\sf{C}_{2}}} \xrightarrow{\operatorname{\Tw^{\rightarrow}(-)}}\operatorname{Rel2Seg_{\Delta}}[\sf{C}_{2}] \xrightarrow{(-)^{\sf{C}_{2}}} \operatorname{Rel2Seg_{\Delta}}
\]
induces an $\infty$-functor on the level of spans
\[
\operatorname{\Tw^{\leftrightarrow}(-)^{\sf{C}_{2}}}: \operatorname{2-Seg_{\Delta}^{\leftrightarrow,\sf{C}_{2}}} \rightarrow\operatorname{Rel2Seg_{\Delta}^{\leftrightarrow}}[\sf{C}_{2}] \rightarrow\operatorname{Rel2Seg^{\leftrightarrow}_{\Delta}}
\]
\end{prop}

 We use this Proposition for the construction of examples of relative $2$-Segal spaces and spans. In the previous section, we have shown that algebraic $K$-theory is a rich source of examples of active equifibered and relative Segal $\Delta^{\op}$-morphisms. The analogue for relative $2$-Segal spans is hermitian $K$-theory. The ideas behind hermitian $K$-theory originate from the fundamental work of Hesselholt and Madsen \cite{hesselholt2015real} on real algebraic $K$-theory. 

An $\infty$-categorical formulation of these ideas is given in \cite{heine2019real}. Analogous to algebraic $K$-theory, hermitian $K$-theory is described by a hermitian analogue of the Waldhausen construction. The hermitian Waldhausen construction associates to every exact $\infty$-category with duality a simplicial space with duality. For the construction of the hermitian Waldhausen construction, we follow the presentation from \cite[Sect.8.2]{heine2019real}. To do so, we first recall some facts about exact $\infty$-categories with duality and $\cS[\sf{C}_{2}]$-enriched $\infty$-categories.

Recall from Subsection~\ref{subsect:relseg} that the $\infty$-category $\sf{Exact}_{\infty}$ of exact $\infty$-categories is defined as the subcategory of $\sf{Exact}_{\infty}\subset \Fun(\Lambda_{2}^{2},\Cat)$ with objects exact $\infty$-categories and morphisms exact functors. The $\infty$-category $\Lambda_{2}^{2}$ carries a natural $\sf{C}_{2}$-action. This action combines with the $\sf{C}_{2}$-action on $\Cat$ to an action on $\Fun(\Lambda_{2}^{2},\Cat)$ that is defined on objects as
\[
(\c{C}_{0} \rightarrow \c{C}_{2} \leftarrow \c{C}_{1}) \mapsto
(\c{C}_{1}^{op} \rightarrow \c{C}_{2}^{op} \leftarrow \c{C}_{0}^{op}).
\]
It is easy to see that this action restricts to a $\sf{C}_{2}$-action on the subcategory $\sf{Exact}_{\infty}$. 

\begin{definition}
     An \textit{exact $\infty$-category with duality} is a homotopy fixed point with respect to the above $\sf{C}_{2}$-action on $\sf{Exact}_{\infty}$. We call the $\infty$-category of homotopy fixed points $\sf{Exact}_{\infty}^{\sf{C}_{2}}$ the $\infty$-category of exact $\infty$-categories with duality. A morphism in this $\infty$-category is called an exact duality preserving functor.
\end{definition}

 Informally, an exact $\infty$-category with duality $\c{C}$ is a Waldhausen $\infty$-category such that the underlying $\infty$-category admits the structure of an $\infty$-category with duality and the class of ingressive morphisms $\c{C}_{in}$ together with the opposites of the ingressive morphisms form the structure of an exact $\infty$-category on $\c{C}$. These serve as an input for the hermitian Waldhausen construction.

 In \cite{heine2019real}, the authors work with real exact $\infty$-categories and equip the real Waldhausen $S_{\bullet}$-construction with the structures of a real simplicial space. Since the $\infty$-category of exact $\infty$-categories with duality forms a full subcategory of the $\infty$-category of real exact $\infty$-categories \cite[Rem.2.33]{heine2019real}, we can apply the construction of \cite[Sect 8.2]{heine2019real} to our situation. 

The key ingredient in the construction \cite[Sect.8.2]{heine2019real} is enriched $\infty$-category theory. In our situation, we are interested in $\infty$-categories enriched over the $\infty$-category $\cS[\sf{C}_{2}]$ of spaces with $\sf{C}_{2}$-action. The amount of enriched $\infty$-category theory necessary for this construction is described in \cite[App. A]{heine2019real}.\footnote{For a more general discussion of enriched $\infty$-category theory see \cite{gepner2015enriched}}
\begin{notation}
    In the following, we adopt the convention to denote $\cS[\sf{C}_{2}]$-enriched $\infty$-categories by $\eC$ to distinguish them from their underlying $\infty$-category $\c{C}$.
\end{notation}

 For an $\cS[\sf{C}_{2}]$-enriched $\infty$-category $\ec{C}$, the underlying $\infty$-category is obtained by taking homotopy fixed points on Hom-spaces \cite[Sect.3.1]{heine2019real}. It follows from \cite[Cor.2.8]{heine2019real} that the $\infty$-categories $\sf{Cat}_{\infty}^{\sf{C}_{2}}$ and $\c{P}(\Delta)^{\sf{C}_{2}}$ are Cartesian closed and are therefore enriched over $\cS[\sf{C}_{2}]$. Informally, the $\sf{C}_{2}$-action maps a duality preserving functor 
\[
F:(\cC,\sD_{\cC})\rightarrow (\cD,\sD_{\cD})
\]
to the duality preserving functor
\[
\begin{tikzcd}
    \cC \arrow[r, "\sD_{\cC}"] & \cC^{\op} \arrow[r,"F^{\op}"] & \cD^{\op} \arrow[r,"\sD_{\cD}^{\op}"] & \cD
\end{tikzcd}.
\]
This enrichment restricts along the inclusion $\Delta^{\sf{C}_{2}}\subset \sf{Cat}_{\infty}^{\sf{C}_{2}}$ to an enrichment of $\Delta$. We denote the $\cS[\sf{C}_{2}]$-enriched $\infty$-category $\Delta$ by $\un{\Delta}$. Further, the $\infty$-category $\cS[\sf{C}_{2}]$ is naturally enriched over itself.

It follows from \cite[Prop.2.12]{heine2019real} that there exists an equivalence $\ec{P}(\Delta)^{\sf{C}_{2}}\simeq \un{\Fun}_{\cS[\sf{C}_{2}]}(\underline{\Delta}^{op},\cS[\sf{C}_{2}])$ of $\cS[\sf{C}_{2}]$-enriched $\infty$-categories between the $\infty$-category of simplicial spaces with duality and the $\infty$-category of $\cS[\sf{C}_{2}]$-enriched functors. We will use this equivalence for the construction of the duality structures on the Waldhausen $S_{\bullet}$-construction. 

The $\infty$-category $\sf{Exact}^{\sf{C}_{2}}_{\infty}$ naturally admits the structure of a $\cS[\sf{C}_{2}]$-enriched $\infty$-category $\un{\sf{Exact}}_{\infty}^{\sf{C}_{2}}$ \cite[Sect.7.1]{heine2019real} that is cotensored over $\un{\sf{Cat}}_{\infty}^{\sf{C}_{2}}$ \cite[Sect.8.2]{heine2019real}. This cotensoring induces an $\cS[\sf{C}_{2}]$-enriched functor:
\[
(-)^{-}:\un{\sf{Exact}}_{\infty}^{\sf{C}_{2}} \times \un{\sf{Cat}}_{\infty}^{\sf{C}_{2},\op} \rightarrow \un{\sf{Exact}}_{\infty}^{\sf{C}_{2}}.
\]
For an $\infty$-category with duality $\c{I}$ and an exact $\infty$-category with duality $\c{C}$ the underlying exact $\infty$-category of the cotensor $\c{C}^{I}$ is given by the functor category $\Fun(I,\c{C})$. The exact structure is defined objectwise, and the induced duality structure maps a functor $F:I \rightarrow \c{C}$ to the composite functor
\[
\begin{tikzcd}
    I \arrow[r, "\sf{D}_{I}^{\op}"] & I^{\op} \arrow[r, "F^{\op}"] & \c{C}^{\op} \arrow[r, "\sf{D}_{\c{C}}"]& \c{C}. 
\end{tikzcd}
\]
As the first step of our construction of the $S_{\bullet}$-construction, we define an $\cS[\sf{C}_{2}]$-enriched version of the $\infty$-category of arrows. Analogously to the non-enriched case, we define the $\cS[\sf{C}_{2}]$-enriched functor
\[
\underline{\sf{Ar}}(-):\underline{\Delta} \subset \un{\sf{Cat}}_{\infty}^{\sf{C}_{2}} \xrightarrow{\un{\hom}([1],-)} \un{\sf{Cat}}_{\infty}^{\sf{C}_{2}}
\]
where $\un{\hom}(-,-)$ denotes the $\cS[\sf{C}_{2}]$-enriched internal Hom-functor of $\sf{Cat}_{\infty}^{\sf{C}_{2}}$. For every $[n]$ this induces a duality structure on the $\infty$-category of $\sf{Ar}([n])$.
Note that for every $[n]\in \un{\Delta}$ the induced duality structure on $\un{\c{C}}^{\un{\sf{Ar}([n])}}$ restricts to a duality structure on $S_{n}(\c{C})$. It follows that the $\cS[\sf{C}_{2}]$-enriched functor 
\[
(-)^{-}:\un{\sf{Exact}}_{\infty}^{\sf{C}_{2}} \times \un{\Delta}^{op} \rightarrow \un{\sf{Exact}}_{\infty}^{\sf{C}_{2}}
\]
restricts to an $\cS[\sf{C}_{2}]$-enriched functor
\[
\un{S}_{\bullet}(-): \un{\sf{Exact}}_{\infty}^{\sf{C}_{2}} \times \un{\Delta}^{op} \rightarrow \un{\sf{Exact}}_{\infty}^{\sf{C}_{2}}.
\]
Such a functor is transpose to a $\cS[\sf{C}_{2}]$-enriched functor 
\[
\un{S}^{\simeq}_{\bullet}(-): \un{\sf{Exact}}_{\infty}^{\sf{C}_{2}} \rightarrow \Fun_{\cS[\sf{C}_{2}]}(\un{\Delta}^{\op},\un{\sf{Exact}}_{\infty}^{\sf{C}_{2}}) \xrightarrow{(-)^{\simeq}} \Fun_{\cS[\sf{C}_{2}]}(\un{\Delta}^{\op},\un{\cS}[\sf{C}_{2}])\simeq \un{\c{P}}(\Delta)^{\sf{C}_{2}}.
\]
We can finally define:
\begin{definition}
 Let $\c{C}$ be an exact $\infty$-category with duality. The $2$-Segal space with duality $\un{S}_{\bullet}^{\simeq}(\c{C})$ is called the \textit{Waldhausen $S_{\bullet}$-construction with duality}.   
\end{definition}

The underlying simplicial space of $\un{S}_{\bullet}$ coincides with the Waldhausen $S_{\bullet}$-construction of an exact $\infty$-category as introduced in Definition~\ref{def:Waldhausen}. In particular, the Waldhausen $S_{\bullet}$-construction with duality is an example of a $2$-Segal space with duality. 

We can now apply Proposition~\ref{prop:dualityfunct} to the Waldhausen $S_{\bullet}$-construction with duality.

\begin{definition}\cite[Sect.1.8]{hornbostel2004localization}
    Let $\c{C}$ be an exact $\infty$-category with duality. We call the simplicial object $\Tw(\un{S}_{\bullet}^{\simeq}(\c{C}))^{\sf{C}_{2}}$ the \textit{hermitian $S_{\bullet}$-construction} and denote it by $\c{R}_{\bullet}(\c{C})$. 
\end{definition}

\begin{remark}
    It is known by the fundamental work of Waldhausen \cite{waldhausen} that for every exact $\infty$-category $\Tw(S_{\bullet}(\c{C}))$ is equivalent to Quillens $Q$-construction $Q(\c{C})$. The authors constructed a hermitian version of the $Q$-construction \cite{calmes2021hermitian}. We expect that for every exact $\infty$-category with duality $\c{C}$, the equivalence between the $\Tw(S_{\bullet}(\c{C}))$ and the $Q$-construction $Q(\c{C})$ extends to an equivalence of simplicial spaces with $\sf{C}_{2}$-action.
\end{remark}

 The following result extends \cite[Thm.3.6]{young2018relative} into the realm of $\infty$-categories:

\begin{cor}
    Let $\c{C}$ be an exact $\infty$-category with duality. The morphism
    \[
    \c{R}_{\bullet}(\c{C}) \rightarrow S_{\bullet}(\c{C})
    \]
    is a relative $2$-Segal object.
\end{cor}
\begin{proof}
    Apply Proposition~\ref{prop:dualityfunct} to $\un{S}^{\simeq}_{\bullet}(\c{C})$.
\end{proof}

 We can further use Proposition~\ref{prop:dualityfunct} for the construction of relative $2$-Segal spans. Let $F:\c{C} \rightarrow \c{D} \in \sf{Exact}_{\infty}^{\sf{C}_{2}}$ be a duality preserving exact functor. It follows from the construction of $\un{S}_{\bullet}(-)$ that it induces a morphism
\[
\un{S}_{\bullet}(F): \un{S}_{\bullet}(\c{C}) \rightarrow \un{S}_{\bullet}(\c{D})
\]
of simplicial spaces with duality. 

\begin{cor}
    Let $F:\c{C} \rightarrow \c{D} \in \sf{Exact}_{\infty}^{\sf{C}_{2}}$ be a fully faithful duality preserving exact functor and consider the induced diagram
    \[
  \begin{xy}
    \xymatrix{\c{R}_{\bullet}(\c{C}) \ar[r]^{\c{R}_{\bullet}(F)} \ar[d] & \c{R}_{\bullet}(\c{D}) \ar[d] \\
    S_{\bullet}(\c{C}) \ar[r]^{S_{\bullet}(F)} & S_{\bullet}(\c{D}) 
    }
\end{xy}
\]
Then the morphism $(\c{R}_{\bullet}(F),S_{\bullet}(F))$ is
\begin{itemize}
    \item[(1)] active equifibered if $F$ is closed under extensions and subobjects.
    \item[(2)] relative Segal if the essential image of $F$ is closed under extensions.
\end{itemize}
\end{cor}

\begin{remark}
    The author is not aware of a hermitian analog of the relative $S_{\bullet}$-construction constructed in Subsection~\ref{subsect:relseg}.  An analog of the construction given in Subsection~\ref{subsect:relseg} for exact $\infty$-categories does not exist for exact $\infty$-categories with duality. The problem is that the defining pullback diagram does not lift to a diagram of simplicial spaces with duality.
\end{remark}

\begin{remark}
    The relative $2$-Segal objects described in this section describe representations of Hall algebras, which have previously studied in the $1$-categorical context in \cite{young2018relative,young2020representations}. As in the case of Remark~\ref{rem:Hallalgebras}, the active equifibered and relative Segal morphisms constructed above, correspond on the level of representations of Hall algebras, to the inclusion of subrepresentations.
\end{remark}

\section{Modules in Span Categories}\label{sect:bimod}
Let $\c{C}$ be an $\infty$-category with finite limits. As sketched in the introduction of Section~\ref{sect:2-seg}, the Cartesian product on $\cC$ induces a symmetric monoidal structure on the $\infty$-category $\Span(\cC)$. The goal of this section is to provide a characterization of bimodule objects in the monoidal $\infty$-category $\Span(\c{C})^{\otimes}$ in terms of birelative $2$-Segal objects in $\c{C}$ (see Definition~\ref{def:birel}). 

The proof we present here is a multicolored version of the proof provided in \cite[Sect.2]{stern20212}, with many ideas drawn from there. For this, we use an explicit combinatorial model for the monoidal $\infty$-category $\Span(\cC)^{\otimes}$, denoted $\Span_{\Delta}(\cC^{\times})$, constructed in \cite{dyckerhoff2019higher} using quasi-categories. We recall this construction in Section~\ref{subsect:prelim}. After this preliminary Section, we prove the claimed equivalence in Section~\ref{sect:Charactofbimod}. 

Before we start, we sketch the strategy of the proof here. Since the $\infty$-category of spans is equivalent to its opposite, studying bicomodules instead of bimodules suffices. Let the functor 
\[
F:\Delta_{/[1]}\xrightarrow{} \Span_{\Delta}(\c{C}^{\times})
\]
over $\Delta$ represent a bicomodule object. Unraveling the definitions, we identify $F$ in Corollary~\ref{cor:condit} with a functor 
\[
F:\Tw(\Delta_{/[1]})\times_{\Delta} \Delta^{\amalg} \xrightarrow{} \c{C}
\]
satisfying a list of conditions. In particular, $F$ inverts a class of morphisms that we will denote by $E$. To deal with these conditions, we construct a localization functor 
\[
\c{L}:\Tw(\Delta_{/[1]})\times_{\Delta} \Delta^{\amalg} \rightarrow \Delta_{1}^{\ast}
\]
along the set of morphisms $E$. The category $\Delta_{1}^{\ast}$ contains $\Delta^{\op}_{/[1]}$ as a full subcategory and the remaining conditions translate under the restriction $\Fun(\Delta_{1}^{\ast},\c{C}) \xrightarrow{} \Fun(\Delta^{\op}_{/[1]},\c{C})$ to the birelative $2$-Segal conditions (see Definition~\ref{def:birel}). 

More explicitly we can describe this identification as follows. The category $\Tw(\Delta_{/[1]})\times_{\Delta}\Delta^{\amalg}$ has objects $(f,\{i,j\})$ represented by diagrams of the form:
\[
\triangle[{[i,j]\subset [n_{0}]}]{[n_{1}]}[{[1]}]{f}[{g^{0}}]{g^{1}}
\]
in $\Delta$. Similarly a $1$-morphism $(e^{0},e^{1}):(f_{0},[i,j]) \xrightarrow{} (f_{1},[l,m])$ in $\Tw(\Delta_{/[1]})\times_{\Delta}\Delta^{\amalg}$ can be represented by a commutative diagram in $\Delta$ of the form:
\[
\begin{tikzpicture}[auto]
                \node (A) at (0,0) {$[i,j]\subset [n_{0}]$};
                \node (B) at (3,0) {$[n_{1}]$};
                \node (C) at (3,-3) {$[m_{1}]$};
                \node (D) at (0,-3) {$[l,m] \subset [m_{0}]$};
                \node (E) at (1.5,-1.5) {$[1]$};
                \draw[->] (A) to node[scale=0.7] {$f^{0}$} (B);
                \draw[->] (C) to node[scale = 0.7,swap] {$e^{1}$} (B);
                \draw[->] (D) to node[scale = 0.7,swap] {$f^{1}$} (C);
                \draw[->] (A) to node[scale = 0.7,swap]{$e^{0}$} (D);
                \draw[->] (B) to node[scale=0.7] {$g^{1}$} (E);
                \draw[->] (A) to (E);
                \draw[->] (C) to node[scale=0.7,swap] {$h^{1}$} (E);
                \draw[->] (D) to (E);

        \end{tikzpicture}
\]
 s.t. $e_{0}(i)\leq l \leq m \leq e_{1}(j)$.  We denote by $X_{\bullet}:\Delta^{\op}_{/[1]}\xrightarrow{} \c{C}$ the birelative simplicial object associated to $F$ under the equivalence of Theorem~\ref{thm:bimodinspan}. In terms of $X_{\bullet}$ the value of $F$ on $(f,[i,j])$ admits an interpretation as:
\[
    X_{g^{1}_{[(f(i),f(i+1)]}} \times \dots \times X_{g^{1}_{[(f(j-1),f(j)]}}.
\]

 Similarly, in terms of $X_{\bullet}$ the value of $F$ on the morphism $(e^{0},e^{1})$ translates to the morphism:

\[
    X_{g^{1}_{[(f^{0}(i),f^{0}(i+1)]}} \times \dots \times X_{g^{1}_{[(f^{0}(j-1),f^{0}(j)]}} \xrightarrow{} X_{h^{1}_{[(f^{1}(l),f^{1}(l+1)]}} \times \dots \times X_{h^{1}_{[(f^{1}(m-1),f^{1}(m)]}},
\]
whose composition with the projection onto $X_{h^{1}_{[(f^{1}(l),f^{1}(l+1)]}}$ is given by:
\[
    X_{g^{1}_{[(f^{0}(i),f^{0}(i+1)]}} \times \dots \times X_{g^{1}_{[(f^{0}(j-1),f^{0}(j)]}} \xrightarrow{} X_{g^{1}_{[(f^{0}(i),f^{0}(i+1)]}} \xrightarrow{X_{e^{1}_{[f^{1}(l),f^{1}(l+1)]}}} X_{h^{1}_{[(f^{1}(l),f^{1}(l+1)]}}.
\]
We unravel this in a specific example.
\begin{example}
    Consider the objects $g:[3] \xrightarrow{0011} [1]$ and $\id_{[1]}:[1]\xrightarrow{} [1]$ in $\Delta_{/[1]}$ and denote by $X_{\bullet}$ the birelative simplicial object associated to the functor $F$. We denote by $f_{i,j}:[1] \xrightarrow{} [3]$ the unique morphism with image $\{i,j\}$ for $i\leq j$. The morphism $f_{0,3}$ extends to an object $(f_{0,3}:\id_{[1]}\xrightarrow{} g,\{0,1\})$ in $\Tw(\Delta_{/[1]})\times_{\Delta} \Delta^{\amalg}$.
Under the identification of the theorem the value of $F$ on $(f_{0,3},\{0,1\})$ identifies with 
\[
   X_{g_{[f(0),f(1)]}} = X_{g} \eqcolon X_{(0,0,1,1)}.
\]
Next, consider the morphisms
\[
\begin{tikzcd}
    {\{0,1\}\subset [1]} \arrow[rr,"f_{0,3}"]\arrow[dr] \arrow[dd,"\id_{[1]}"]& & {[3]} \arrow[dl]\\
    & {[1]} & \\
    {\{0,1\}\subset[1]} \arrow[rr, "\id_{[1]}"] \arrow[ur]& & {[1]}\arrow[uu, "f_{0,3}"] \arrow[ul]
\end{tikzcd}
\]
and 
\[
\begin{tikzcd}
    {\{0,1\}\subset [1]} \arrow[rr,"f_{0,3}"]\arrow[dr] \arrow[dd,"f_{0,3}"]& & {[3]} \arrow[dl]\\
    & {[1]} & \\
    {\{0,3\}\subset[3]} \arrow[rr, "\id_{[3]}"] \arrow[ur]& & {[3]}\arrow[uu, "\id_{[3]}"] \arrow[ul]
\end{tikzcd}
\]
in $\Tw(\Delta_{/[1]})\times_{\Delta} \Delta^{\amalg} $. Under the identification with $X_{\bullet}$, we can interpret the two morphisms $F(f_{0,3},\{0,1\}) \xrightarrow{} F(\id_{[3]},\{0,3\})$ and $F(f_{0,3},\{0,1\})\xrightarrow{} F(\id_{[1]},\{0,1\})$ in $\c{C}$ as the span
\[
\begin{tikzcd}
    &  X_{(0,0,1,1)} \arrow[dl ,swap, "X_{f_{0,1}}\times X_{f_{1,2}} \times X_{f_{2,3}}"] \arrow[dr,"X_{f}"]  & \\
    X_{(0,0)}\times X_{(0,1)} \times X_{(1,1)}& & X_{(0,1)}
\end{tikzcd}
\]
This span describes the simultaneous action of the algebra objects $X_{(0,0)}$ and $X_{(1,1)}$ on the bimodule $X_{(0,1)}$.
\end{example}

\subsection{Preliminaries}\label{subsect:prelim}

In this section, we introduce some notation and definitions essential for the combinatorics needed in the next section. Particularly, we recall an explicit construction of a monoidal structure on the $\infty$-category of spans in the model of quasi-categories. We will use this model in the following sections. First, we introduce some notation:
\begin{notation}
    Let $f:[n]\xrightarrow{} [1]$ be an object in $\Delta_{/[1]}$. We will say that $f$ is \emph{supported at $S \subset \{0,1\}$}, if $\mathsf{Im}(f) = S$. We can uniquely represent a morphism $f$ by a sequence $(0,0,...,0,1,...,1)$ with $n+1$-entries.
\end{notation}

\begin{definition}
    The \emph{interval category} $\nabla$ is the subcategory of $\Delta$ with objects given by those $[n]$ with $n\geq 1$, and morphisms are maps that preserve maximal and minimal elements, also called active maps. The \emph{augmented interval category} $\nabla^{+}$ is the wide subcategory of $\Delta$ with morphisms being the active maps.
\end{definition}

\begin{definition}\label{def:inner}
    Let $S$ be a linearly ordered set. An \emph{inner interstice} of $S$ is a pair $(n,n+1)\in S \times S$, where $n+1$ denotes the successor of $n$ in $S$. We denote the set of inner interstices of $S$ by $\mathbb{I}(S)$. This set comes equipped with a canonical linear order induced from $S$. This construction can be enhanced to a functor
    \[
\mathbb{I}:\nabla_{+}^{\op} \xrightarrow{} \Delta_{+}
    \]
    that associates to a morphism $f:S \xrightarrow{} T \in \nabla_{+}$ the morphism $\mathbb{I}(f):\mathbb{I}(T) \xrightarrow{} \mathbb{I}(S)$
    \[
        \mathbb{I}(f)(j,j+1) = (k,k+1) \quad ;f(k) \leq j < j+1 \leq f(k+1).
    \]
\end{definition}

\begin{definition}
    We define $\Delta^{\amalg}$ to be the category with objects $([n],i\leq j)$ consisting of a linearly ordered set $[n]$ and a pair of objects $i\leq j\in [n]$. A morphism $f:([n],i\leq j) \xrightarrow{} ([m],k\leq l)$ is given by a morphism $f$ in $\Delta$, s.t. $f(i)\leq k \leq l \leq f(j)$. We think of an object $([n],i\leq j)$ as a subinterval $\{i,...,j\} \subset [n]$. 

    We will frequently denote an object $([n],i\leq j)$ in $\Delta^{\amalg}$ by $([n],[i,j])$
    It is easy to check that the forgetful functor 
    \[
    \pi_{\amalg}:\Delta^{\amalg} \xrightarrow{} \Delta
    \]
    is a Cocartesian fibration. For every morphism $f:[n] \xrightarrow{} [m]$ a $\pi_{\amalg}$-cocartesian lift is given by the morphism $f:([n],i\leq j) \xrightarrow{} ([m],f(i)\leq f(j))$. 
\end{definition}

\begin{construction}\label{constr:decomp}
    Given a morphism $f:[m]\xrightarrow{} [n]$ over $[1]$ we can uniquely decompose it as follows. We can decompose the source into a composite of $[m] := \{0,1\}\ast  \{1,2\} \ast ... \ast \{m-1,m\}$. Restricting $f$ to each individual interval yields a morphism 
    \[
    f_{i}:= f|_{\{i-1,i\}}:\{i-1,i\}:= [1_{i}] \xrightarrow{} \{f(i-1)\leq f(i)\} := [n_{i}]
    \]
    in $\Delta_{/[1]}$ that preserves endpoints. We further denote $\{0\leq f(0)\}:= [n_{l}]$ and $\{f(m)\leq n\}:= [n_{r}]$. Using this decomposition, we can uniquely reconstruct $f$ as a morphism in $\Delta_{/[1]}$:
    \[
        f=f_{1} \ast...\ast f_{m} : [1_{1}]\ast... \ast [1_{m}] \xrightarrow{} [n_{1}]\ast ... \ast [n_{m}] \hookrightarrow [n_{l}] \ast [n_{1}] \ast ... \ast [n_{m}] \ast [n_{r}].
    \]
    We call this process \emph{decomposition}.
\end{construction}

 Next, we recall the construction of the Cartesian monoidal structure and the $\infty$-category of spans in the quasi-categorical model as presented in \cite[Chapt.10]{dyckerhoff2019higher}. In the following, we implicitly identify every $1$-category with its nerve.

\begin{construction}\cite[Constr.1.29]{stern20212}\label{constr:cart}
    Let $\c{C}$ be an $\infty$-category with finite products. We define a simplicial set over $\Delta$ via the adjunction formula
    \[
        \Map_{\Delta}(K,\Bar{\c{C}}^{\times}) \simeq \Map_{\mathsf{Set}_{\Delta}}(K\times_{\Delta}\Delta^{\amalg},\c{C}).
    \]
    This defines a Cartesian fibration $p:\Bar{\c{C}}^{\times} \xrightarrow{} \Delta$. We define $\c{C}^{\times}\subset \bar{\c{C}}^{\times}$ to be the full subcategory on those objects $F:\Delta^{0}\times_{\Delta}\Delta^{\amalg}\xrightarrow{} \c{C}$, that display $F(\{i\leq j\})$ as a product of $F(\{k \leq k+1\}) $ for $i \leq k < j$. The restricted functor $p:\c{C}^{\times} \xrightarrow{} \Delta$ is also a Cartesian fibration and exhibits the Cartesian monoidal structure on $\c{C}$. 

    A morphism $\Tilde{\Phi}:\Delta^{1}\xrightarrow{} \c{C}^{\times}$ represented by a map $\Phi: \{[n]\xrightarrow{} [m]\}\times_{\Delta}\Delta^{\amalg}\xrightarrow{} \c{C}$ is Cartesian if and only if $\Phi$ carries all $\pi_{\amalg}$-cocartesian  edges of  $\{[n]\xrightarrow{f} [m]\}\times_{\Delta}\Delta^{\amalg} \xrightarrow{} \Delta$ to equivalences \cite[Cor.3.2.2.12]{lurie2006higher}. This happens if and only if $\Phi$ maps all maps of the form $f:([i,j]\subset [n] )\xrightarrow{} ([f(i),f(j)]\subset [m])$ induced by a weakly monotone map $f:[n] \xrightarrow{} [m]$ to equivalences.
\end{construction}

\begin{construction}\cite[Constr.1.33]{stern20212}\label{constr:span}
    Given $X_{\bullet} \in \mathsf{Set}_{\Delta}$. There exists an adjunction:
    \[
        \Tw_{X}: (\mathsf{Set}_{\Delta})_{/X} \longleftrightarrow (\mathsf{Set}_{\Delta})_{/X}: \overline{\mathsf{Span}}_{X}
    \]
    by setting $\Tw_{X}(f:S\xrightarrow{} X) = \Tw(S) \xrightarrow{} \Tw(X) \xrightarrow{} X$ and $\overline{\mathsf{Span}}_{X}(S\xrightarrow{} X)$ to be the left vertical arrow of the pullback:
    \[
        \begin{xy}
            \xymatrix{\overline{\mathsf{Span}}_{X}(S) \ar[r] \ar[d] & \overline{\mathsf{Span}}(S) \ar[d] \\
            X \ar[r] & \overline{\mathsf{Span}}(X)
            }
        \end{xy}
    \]
    Let $p:S \xrightarrow{} X$ be a map of simplicial sets. An $n$-simplex in $\overline{\mathsf{Span}}_{X}(S)$ represented by a map $\phi:\Tw(\Delta^{n}) \xrightarrow{} S$ is called a \textit{Segal simplex} if, for every $\Delta^{k}\subset \Delta^{n}$ the composite diagram:
    \[
        \{0,k\} \ast \Tw(\mathsf{Sp}(\Delta^{k}))\subset \Tw(\Delta^{k}) \subset \Tw(\Delta^{n}) \xrightarrow{\phi} S
    \]
    is a $p$-limit diagram.  Here, $\mathsf{Sp}(\Delta^{k})$ denotes the spine of $\Delta^{k}$ and we call the join $\{0,k\} \ast \Tw(\mathsf{Sp}(\Delta^{k}))$ the \textit{Segal cone}. We denote by $\mathsf{Span}_{X}(S) \subset \overline{\mathsf{Span}}_{X}(S)$ the simplicial subset consisting of Segal simplices. In case $X\simeq \ast$ we adopt the notation $\Span(\c{C})\coloneqq \Span_{\ast}(\c{C})$. 
\end{construction}
 It follows from \cite[Thm.10.2.6]{dyckerhoff2019higher} that for every $\infty$-category $\c{C}$ the simplicial set $\Span(\c{C})$ is itself an $\infty$-category. 

\begin{prop}\cite[Prop.10.2.31]{dyckerhoff2019higher}
Let $\c{C}$ be an $\infty$-category with finite limits. Then the functor:
\[
\mathsf{Span}_{\Delta}(\c{C}^{\times}) \xrightarrow{} \Delta 
\]
is a monoidal $\infty$-category.
\end{prop}

 Note that the construction of the $\infty$-category $\Span(\c{C})$ is functorial. Indeed, every finite limit preserving functor $F:\c{C}\rightarrow \c{D}$ between $\infty$-categories $\c{C}$ and $\c{D}$ induces a functor $\Span(F): \Span(\c{C})\rightarrow \Span(\c{D})$ between $\infty$-categories of spans. In particular, after passing to homotopy coherent nerves the construction $\Span(-)$ induces an $\infty$-functor
\[
\Span(-):\sf{Cat}_{\infty}^{\lex} \rightarrow\Cat
\]
where $\sf{Cat}_{\infty}^{\lex}$ denotes the $\infty$-category of $\infty$-categories with finite limits and finite limit preserving functors.

\subsection{Characterization of Bimodules}\label{sect:Charactofbimod}
Let $\c{C}$ be an $\infty$-category with finite limits and denote by $\c{C}^{\times}$ the associated Cartesian monoidal structure as constructed in Construction~\ref{constr:cart}. Our main goal in this section is to prove the following:

\begin{theorem}\label{thm:bimodinspan}
    Let $\c{C}$ be an $\infty$-category with finite limits. There exists an equivalence of spaces:
    \[
        \BMod(\Span_{\Delta}(\c{C}^{\times}))^{\simeq} \simeq \mathsf{BiSeg}_{\Delta}(\c{C})^{\simeq}.
    \]
\end{theorem}
 To increase readability of the proof, we have included some technical lemmas in the Appendix~\ref{sect:Appendix}. As a start, let us unravel the datum of a cobimodule in $\mathsf{Span}_{\Delta}(\c{C}^{\times})$. Such an object is given by a commutative triangle:
\[
\begin{tikzcd}
    \Delta_{/[1]} \arrow[rr, "F'"]\arrow[dr] & & \Span_{\Delta}(\c{C}^{\times}) \arrow[dl] & \\
    & \Delta &
\end{tikzcd}
\]
s.t $F$ preserves inert morphisms and the adjoint morphism $\Tilde{F}: \Tw(\Delta_{/[1]}) \xrightarrow{} \c{C}^{\times} $ maps every $n$-simplex $\Delta^{n} \xrightarrow{} \Delta_{/[1]}$ to a Segal simplex. Unraveling the definitions, $F'$ corresponds by adjunction to a morphism
\[
F:\Theta_{1}\coloneqq \Tw(\Delta_{/[1]})\times_{\Delta}\Delta^{\amalg} 
\rightarrow
 \cC.
\]
We have included a precise discussion of these conditions in the Appendix~\ref{subsect:appendix1} and state here only the main result.

\begin{prop}\label{cor:condit}
    A functor $F:\Theta_{1} \xrightarrow{} \c{C}$ defines a bicomodule object, if and only if
    \begin{itemize}
        \item[(1)] $F$ sends degenerate\footnote{An interval $[i,j]\subset [n]$ is called degenerate if $i=j$.} intervals to terminal objects. 
        \item[(2)] $F$ sends every object $(\phi:g_{[n_{0}]}^{0} \xrightarrow{} g_{[n_{1}]}^{1}, [i,j])$ together with its projection to subintervals to a product diagram\footnote{See Construction~\ref{constr:cart}}.
        \item[(3)] F sends every morphism 
        \[
            \sigma \simeq 
                \begin{Bmatrix}
                    \begin{xy}
                \xymatrix{[i,j] \subset g_{n_{0}}^{0} \ar[r]^-{g}\ar[d]^-{f}&  g_{n_{1}}^{1} \\
                [\Tilde{i},\Tilde{j}] \subset g_{m_{0}}^{0} \ar[r]^-{\Tilde{g}} & g_{m_{1}}^{1} \ar[u]^-{\Tilde{f}} \\
                }
            \end{xy}
            \end{Bmatrix}
        \]
        s.t $f$ restricts to an isomorphism $\{i,...,j\} \xrightarrow{} \{\Tilde{i},...,\Tilde{j}\}$ and $\Tilde{f}$ to an isomorphism $\{g(i),...,g(j)\} \xrightarrow{} \{\Tilde{g}(\Tilde{i}),...,\Tilde{g}(\Tilde{j})\}$ to an equivalence. We denote by $E$ the set of morphisms of the above form.
        \item[(4)] $F$ maps all Segal cone diagrams from Definition~\ref{def:Segalcone} to limit diagrams.
    \end{itemize}
\end{prop}
\begin{definition}\label{def:bimodseg}
    We define several full subcategories of $\Fun(\Theta_{1},\cC)$. We denote by 
    \begin{itemize}
        \item $\BMod_{Sp}(\c{C})$ the full subcategory of $\Fun(\Theta_{1},\c{C})$ generated by those functors that satisfy the conditions of Proposition~\ref{cor:condit}.
        \item $\Fun^{\ast}(\Theta_{1},\c{C})$ the full subcategory generated by those functors that map degenerate intervals to terminal objects.
        \item $\BMod_{\pi}(\c{C})$ the full subcategory of $\Fun(\Theta_{1},\c{C})$ generated functors that satisfy conditions $(1)$ and $(2)$ of Proposition~\ref{cor:condit}.
    \end{itemize}
    Furthermore, we denote by $\Omega_{1}$ the full subcategory of $\Theta_{1}$ with objects those morphisms $([i,j]\subset g_{n_{0}}^{0} \xrightarrow{} g_{n_{1}}^{1})$, s.t. the interval $[i,j]$ is non-degenerate.
\end{definition}

 We first analyze condition $(1)$:

\begin{prop}
    The restriction functor induces an equivalence of $\infty$-categories 
    \[
        \Fun^{\ast}(\Theta_{1},\c{C}) \xrightarrow{\simeq} \Fun(\Omega_{1},\c{C})
    \]
\end{prop}
    \begin{proof}
        The proof is the same as \cite[Cor.2.9]{stern20212}.
    \end{proof}

 To handle condition $(3)$ of Proposition~\ref{cor:condit}, we explicitly construct a localization of $\Omega_{1}$ along the class of morphisms $E$.

\begin{definition}
    The category $\Delta_{1}^{\star}$ has objects consisting of pairs of a finite ordered set $[k]$ and a $[k]$-indexed sequence of composable\footnote{See Definition~\ref{constr:decomp}} objects $(f_{n_{0}},...,f_{n_{k}})$ in $\Delta_{/[1]}$. By definition, these define a morphism 
    \[
        f:= f_{n_{0}}\ast ...\ast f_{n_{k}}:[n_{0}]\ast[n_{1}] \ast ... \ast [n_{k}] \xrightarrow{} [1].
    \]
    A morphism $(g,\theta): ((f^{1}_{n_{0}},...f^{1}_{n_{k}}),[k]) \xrightarrow{} ((f^{2}_{m_{0}},...,f^{2}_{m_{l}}),[l])$ consists of a 
    \begin{itemize}
        \item[(1)] a morphism $\theta:[l]\xrightarrow{} [k]$.
        \item[(2)] a commutative diagram
        \[
        \begin{tikzcd}
            {[m_{0}]\ast ... \ast [m_{l}]} \arrow[rr, "g"] \arrow[dr,swap,"f^{2}"] & &  {[n_{0}] \ast ... \ast [n_{k}]} \arrow[dl, "f^{1}"] \\
            & {[1]} & 
        \end{tikzcd}
        \]
        s.t. for any $i \in [k]$ with $\theta^{-1}(i) = \{j_{1},...,j_{p}\}$, the restriction
        \[
            g_{i}:[m_{j_{1}}]\ast ... \ast [m_{j_{p}}] \xrightarrow{} [n_{0}] \ast ... \ast [n_{k}]
        \]
        has image contained in $[n_{i}]$.
    \end{itemize}
\end{definition}
\begin{construction}\label{constr:L}
    We define a functor $\mathcal{L}:\Omega_{1} \xrightarrow{} \Delta^{\ast}_{1}$ as follows:
    \begin{itemize}
        \item It maps an object $[i,j] \subset g_{n_{0}}^{0} \xrightarrow{f} g_{n_{1}}^{1}$ to the object $(g_{[f(i),f(i+1)]}^{1},...,g_{[f(j-1),f(j)]}^{1})$. The image is indexed by the inner interstices\footnote{See Definition~\ref{def:inner}} $\mathbb{I}([i,j])$ of the linearly ordered set $\{i,i+1,...,j\}$.
        \item A morphism in $\Omega_{1}$ is given by
           \[
             \sigma \simeq     
             \begin{Bmatrix}
               \begin{xy}
                \xymatrix{[i,j] \subset g_{n_{0}}^{0} \ar[r]^-{g}\ar[d]^-{f}&  g_{n_{1}}^{1} \\
                [\Tilde{i},\Tilde{j}] \subset g_{m_{0}}^{0} \ar[r]^-{\Tilde{g}} & g_{m_{1}}^{1} \ar[u]^-{\Tilde{f}} \\
                }
            \end{xy}
            \end{Bmatrix}.
        \]
        The functor $\mathcal{L}$ maps $\sigma$ to the map $\mathcal{L}(\sigma)=(h_{\sigma},\mathbb{I}(f))$, whose second component is given by \[
        \mathbb{I}(f)~:~\mathbb{I}([\Tilde{i},\Tilde{j}]) \xrightarrow{} \mathbb{I}([i,j]).
        \]
         We define $h_{\sigma}$ componentwise. Consider an inner interstice $\{p,p+1\}~\in~\mathbb{I}([i,j])$. The restriction of $h_{\sigma}$ to the component indexed by $\mathbb{I}(f)^{-1}(\{p,p+1\})$ is given by:
\[
\begin{tikzcd}
    {[\Tilde{g}(f(p)),\Tilde{g}(f(p)+1)] \ast ... \ast [\Tilde{g}(f(p+1)-1),\Tilde{g}(f(p+1))]} \arrow[rr,"\Tilde{f}"] \arrow[dr,swap, "g^{1}_{[\Tilde{g}(f(p)), \Tilde{g}(f(p+1))]}"]& & {[g(p),g(p+1)]} \arrow[dl, "g^{1}_{[g(p),g(p+1)]}"]\\
    & {[1]} &
\end{tikzcd}
\]
It follows from the commutativity of $\sigma$ that this yields a morphism in $\Delta_{1}^{\ast}$.
\end{itemize}
\end{construction}
 The technical proof of the following proposition is included in Appendix~\ref{subsect:appendix2}:
\begin{prop}\label{prop:localization}
    The functor $\mathcal{L}:\Omega_{1}\xrightarrow{} \Delta^{\star}_{1}$ is an $\infty$-categorical localization at the morphisms $E$ defined in Proposition~\ref{cor:condit} (3).
\end{prop}

\begin{definition}\label{def:bimod}
    Let $\c{C}$ be an $\infty$-category with finite limits. We define the category $\Fun^{\bim}(\Delta^{\star}_{1},\c{C}) \subset \Fun(\Delta^{\star}_{1},\c{C})$ as the full subcategory with objects those functors, that 
    \begin{itemize}
        \item[(I)] send diagrams of the form
        \[
            \begin{xy}
                \xymatrix{& & (g_{0},...,g_{k}) \ar[dr] \ar[drr] \ar[dl] \ar[dll] & & \\
                g_{0} & g_{1}  & ... &  g_{k-1} & g_{k}
                }
            \end{xy}
        \]
        to equivalences.
        \item[(II)] send diagrams of the form
        \[
            \begin{xy}
                \xymatrix{
                & (\ast_{j\in [n_{1}]}g_{j}^{2}, ..., \ast_{j\in [n_{l}]}g_{j}^{2}) \ar[dl] \ar[dr] & \\
                (g_{n_{1}}^{1},...,g_{n_{l}}^{1}) \ar[dr] & & (g_{1}^{2},...,g_{k}^{2}) \ar[dl] \\
                & (g_{1}^{1}|_{\{0,1\}},...,g_{1}^{l}|_{\{n_{l}-1,n_{l}\}}) & 
                }
            \end{xy}
        \]
        to equivalences.
    \end{itemize}
\end{definition}

\begin{prop}\label{prop:Lisanequiv}
    Restriction along the functor $\mathcal{L}:\Omega_{1} \rightarrow \Delta_{1}^{\ast}$ induces an equivalence of $\infty$-categories
    \[
        \BMod_{Sp}(\c{C}) \simeq \Fun^{\bim}(\Delta^{\star}_{1},\c{C}).
    \]
\end{prop}
\begin{proof}
    It follows from Proposition~\ref{prop:localization} that restriction along $\mathcal{L}$ induces an equivalence
   \[
        \Fun_{E^{-1}}(\Omega_{1},\c{C}) \simeq \Fun(\Delta_{1}^{\star},\c{C}),
   \]
    where we denote by $\Fun_{E^{-1}}(\Omega_{1},\c{C})$ the full subcategory of $\Fun(\Omega_{1},\c{C})$ on those functors, that map all morphisms in $E$ to equivalences. To conclude, we only have to match the remaining conditions of Proposition~\ref{cor:condit}. By construction, condition (2) on the left matches with condition (I) on the right. For condition (4) let $\sigma$ be a $3$-simplex 
    \[
        g^{0}_{n_{0}} \xrightarrow{\phi_{1}} g^{1}_{n_{1}} \xrightarrow{\phi_{2}} g^{2}_{n_{2}} \xrightarrow{\phi_{3}} g^{3}_{n_{3}} 
   \]
    in $\Omega_{1}$ and $F$ an object of $\BMod_{Sp}(\c{C})$. Further let $[i,j] \subset [n_{0}]$ be a subinterval. The corresponding limit diagram for condition (4) reads as
    \[
    \begin{tikzpicture}
        \node (A) at (0,0) {$F(\psi_{3},[i,j])$};
        \node (B) at (-4,-1.5) {$F(\phi_{1}, [i,j])$};
        \node (C) at (0,-1.5) {$F(\phi_{2}, [\psi_{1}(i),\psi_{1}(j)])$};
        \node (D) at (4,-1.5) {$F(\phi_{3}, [\psi_{2}(i),\psi_{2}(j)])$};
        \node (E) at (-2,-3) {$F(g^{1}_{n_{1}},[\psi_{1}(i),\psi_{1}(j)])$};
        \node (F) at (2,-3) {$F(g^{2}_{n_{2}},[\psi_{2}(i),\psi_{2}(j)])$};

        \draw[->] (A) to (B);
        \draw[->] (A) to (C);
        \draw[->] (A) to (D);
        \draw[->] (B) to (E);
        \draw[->] (C) to (E);
        \draw[->] (C) to (F);
        \draw[->] (D) to (F);
        
    \end{tikzpicture}
    \]
    It follows from the dual of \cite[Prop.4.4.2.2]{lurie2006higher} that this diagram is a limit diagram, if
     and only if the diagram
    \[
    \begin{tikzcd}
        & F(\psi_{3},[i,j]) \arrow[dr] \arrow[dl]& \\
      F(\psi_{2},[i,j])\arrow[dr]  & & F(\phi_{3} \circ \phi_{2}, [\phi(i),\phi(j)]) \arrow[dl] \\
       & F(\phi_{2},[\phi_{1}(i),\phi_{1}(j)]) &   
    \end{tikzcd}
    \]
    is a pullback diagram. Combining this with the previous diagram, we obtain the following diagram:
    \[
    \begin{tikzpicture}
        \node (A) at (0,0) {$F(\psi_{3},[i,j])$};
        \node (B) at (-4,-3) {$F(\phi_{1}, [i,j])$};
        \node (C) at (0,-3) {$F(\phi_{2}, [\psi_{1}(i),\psi_{1}(j)])$};
        \node (D) at (4,-3) {$F(\phi_{3}, [\psi_{2}(i),\psi_{2}(j)])$};
        \node (E) at (-2,-4.5) {$F(g^{1}_{n_{1}},[\psi_{1}(i),\psi_{1}(j)])$};
        \node (F) at (2,-4.5) {$F(g^{2}_{n_{2}},[\psi_{2}(i),\psi_{2}(j)])$};
        \node (G) at (-2,-1.5) {$F(\psi_{2},[i,j])$};
        \node (H) at (2,-1.5) {$F(\phi_{3} \circ \phi_{2}, [\phi(i),\phi(j)])$};

        \draw[->] (A) to (G);
        \draw[->] (A) to (H);
        \draw[->] (B) to (E);
        \draw[->] (C) to (E);
        \draw[->] (C) to (F);
        \draw[->] (D) to (F);
        \draw[->] (G) to (B);
        \draw[->] (G) to (C);
        \draw[->] (H) to (C);
        \draw[->] (H) to (D);
        
    \end{tikzpicture}
    \]
    It follows from the pasting property for pullbacks, that the diagram corresponding to any sub 2-simplex is a limit diagram. Iterating this argument, we see that condition (4) is satisfied if and only if it is satisfied on 2-simplices. The claim follows from the observation that condition (II) is precisely the image of condition (4) on $2$-simplices under $\mathcal{L}$.
\end{proof}
    Using the above proposition, the problem has shifted to analyzing the conditions of Definition~\ref{def:bimod}. To this end, consider the full subcategory $i:\Delta_{/[1]}^{\op} \xrightarrow{} \Delta^{\star}_{1}$ generated by objects $f:[n]\xrightarrow{} [1] \in \Delta_{/[1]}$ with $n\geq 0$. Restriction and right Kan extension induce an adjunction
    \[
        i^{\ast}:\Fun(\Delta^{\star}_{1},\c{C}) \rightleftarrows \Fun(\Delta^{\op}_{/[1]},\c{C}):i_{\ast}.
    \]
    We denote by $\Fun^{\times}(\Delta^{\star}_{1},\c{C})\subset \Fun(\Delta^{\star}_{1},\c{C})$ the full subcategory on those functors that satisfy condition (I) of Definition~\ref{def:bimod}.
\begin{prop}
    The functor $i_{\ast}:\Fun(\Delta_{/[1]}^{\op},\c{C}) \rightarrow \Fun(\Delta_{1}^{\star},\c{C})$ factors through the full subcategory $\Fun^{\times}(\Delta_{1}^{\star},\c{C})$. Moreover, the induced functor
    \[
    i_{\ast}:\Fun(\Delta_{/[1]}^{\op},\c{C}) \rightarrow \Fun(\Delta_{1}^{\star},\c{C})
    \]
    is an equivalence of $\infty$-categories.
\end{prop}
\begin{proof}
 Let $F\in \Fun(\Delta^{\op}_{/[1]},\c{C})$ be a functor and let $(f_{m_{1}}, f_{m_{2}},...,f_{m_{k}})$ be an object of $ \Delta_{1}^{\star}$. The value of $i_{\ast}F$ on $(f_{m_{1}}, f_{m_{2}},...,f_{m_{k}})$ can be computed as the limit of $F$ over the diagram indexed by the category 
 \[
 \c{D}:=(\Delta^{\op}_{/[1]})_{/(f_{m_{1}}, f_{m_{2}},...,f_{m_{k}})}.
 \]
 An object of the category $\c{D}$ consists of an element $i \in \{1,...,k\}$ together with a morphism $h_{l} \xrightarrow{} f_{m_{i}}$ in $\Delta_{/[1]}$. Note that a morphism from $(h_{l}\xrightarrow{} f_{m_{i}})$ to $(e_{p} \xrightarrow{} f_{m_{j}})$ in $\c{D}$ only exists if $i=j$. In this case, it is given by a commutative diagram
 \[
 \triangle[{e_{p}}]{h_{l}}[{f_{m_{i}}}]{}[{}]{}
 \]
 in $\Delta_{/[1]}$. It follows, that the value of $i_{\ast}F(f_{m_{1}},...,f_{m_{k}})$ is given by the limit
 \[
 \begin{gathered}
     \begin{xy}
             \xymatrix@C=0 cm{ & & i_{\ast}F(f_{m_{1}},...,f_{m_{k}}) \ar[dll] \ar[dl] \ar[dr] \ar[drr] & & \\
             F(f_{m_{1}}) & F(f_{m_{2}}) & ... & F(f_{m_{k-1}}) & F(f_{m_{k}}) }
         \end{xy}
    \end{gathered}
 \]
 This proves the first claim. The second claim follows from \cite[Prop.4.3.2.15]{lurie2006higher}.
\end{proof}

 We need the following auxiliary lemma:
\begin{lemma}\label{lem:Diagram}
    Let $F\in \Fun(\Delta_{1}^{\star},\c{C})$ be a functor. Then $F$ satisfies condition $(II)$, if and only if it satisfies condition $(II)$ where all but one of the $g_{2}^{i}$ have source equal to $[1]$. We will call this condition $(II')$.
\end{lemma}

\begin{proof}
    It follows from the assumptions that $(II)$ implies $(II')$. We assume that $F$ satisfy condition $(II')$. 
    We consider the diagram in $\Delta^{\star}_{1}$ displayed in Figure~\ref{fig:10}.
    \begin{figure}[ht]
    \centering
    \includegraphics[page=1, width= \textwidth]{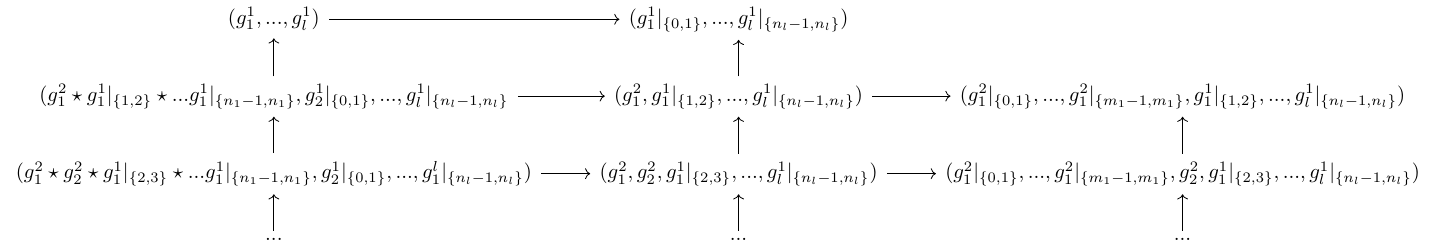}
    \caption{Proof of Lemma~\ref{lem:Diagram}}
    \label{fig:10}
    \end{figure}
    Since $F$ satisfies $(II')$, it follows that $F$ maps the bottom right square and the exterior right rectangle of the diagram to pullback diagrams. By the pasting law, it follows that the functor $F$ also sends the bottom left square to a pullback square. Hence, it follows from $(II')$ that the upper left square is also a pullback square. Another application of the pasting law implies that the vertical left square is a pullback square. Iterating this argument yields the claim.
\end{proof}
 After all these intermediate steps, we can finally prove:
\begin{prop}\label{prop:bimisbiseg}
    The restriction functor $i^{\ast}:\Fun(\Delta^{\ast}_{1},\c{C}) \xrightarrow{} \Fun(\Delta^{\op}_{/[1]},\c{C})$ descends to an equivalence
    \[
        \Fun^{\bim}(\Delta^{\star}_{1},\c{C}) \simeq \mathsf{BiSeg}_{\Delta}(\c{C})
    \]
    where $\sf{BiSeg}_{\Delta}(\c{C}) \subset \Fun(\Delta^{\op}_{/[1]},\c{C})$ denotes the full subcategory generated by birelative $2$-Segal objects. 
\end{prop}
\begin{proof}
    Let $F \in \Fun^{\times}(\Delta^{\star}_{1},\c{C})$ be a functor and let 
    \[
        \begin{xy}
            \xymatrix{g_{1} \ar[d] & \ast_{0\leq k < j-1} g_{1}|_{\{k,k+1\}} \ast g_{2} \ast (\ast_{j\leq k< n} g_{1}|_{k,k+1})\ar[d] \ar[l] \\
            (g_{1}|_{\{0,1\}},...,g_{1}|_{\{n-1,n\}})  & (g_{1}|_{\{0,1\}},...,g_{1}|_{\{j-2,j-1\}},g_{2},g_{1}|_{\{j,j+1\}},...,g_{1}|_{\{n-1,n\}}) \ar[l]
            }
        \end{xy}
    \]
    be a diagram of type $(II')$. We can expand this diagram  
    \[
        \begin{xy}
            \xymatrix{g_{1} \ar[d] & \ast_{0\leq k < j-1} g_{1}|_{\{k,k+1\}} \ast g_{2} \ast (\ast_{j\leq k< n} g_{1}|_{k,k+1})\ar[d] \ar[l] \\
            (g_{1}|_{\{0,1\}},...,g_{1}|_{\{n-1,n\}})  \ar[d]& (g_{1}|_{\{0,1\}},...,g_{1}|_{\{j-2,j-1\}},g_{2},g_{1}|_{\{j,j+1\}},...,g_{1}|_{\{n-1,n\}}) \ar[d] \ar[l] \\
            g_{1}|_{\{j-1,j\}} & g_{2} \ar[l] \\
            }
        \end{xy}
    \]
    Since $F$ satisfies condition $(I)$ of Definition~\ref{def:bimod}, it follows that $F$ maps the lower square to a pullback diagram. By the pasting lemma, the upper square is a pullback if and only if the outer rectangle is a pullback. But the latter can be identified with the opposite of the diagram
    \[
    \begin{tikzpicture}[auto]
        \node (A) at (-2,1.5) {$[n]$};
        \node (B) at (-2,-1.5) {$[1]$};
        \node (C) at (0,0) {$[1]$};
        \node (D) at (2,-1.5) {$[m]$};
        \node (E) at (2,1.5) {$[n+m-1]$};

        \draw[->](B) to (A);
        \draw[->](B) to (D);
        \draw[->](D) to (E);
        \draw[->](A) to (E);
        \draw[->](A) to node[scale=0.7] {$g_{1}|_{\{j-1,j\}}$}(C);
        \draw[->](B) to node[scale=0.7] {$g_{1}$} (C);
        \draw[->](D) to node[scale=0.7,swap] {$g_{2}$} (C);
        \draw[->](E) to (C);
    \end{tikzpicture}
    \]
    which precisely recovers the birelative Segal conditions.
\end{proof}

As a corollary, we obtain analogous results for left and right module objects in $\Span_{\Delta}(\c{C}^{\times})$:

\begin{cor}
    Let $\c{C}$ be an $\infty$-category with finite limits. There exist equivalences of spaces
    \[
        \LMod(\Span_{\Delta}(\c{C}^{\times}))^{\simeq} \simeq \mathsf{LSeg}_{\Delta}(\c{C})^{\simeq},
    \]
    and 
    \[
        \RMod(\Span_{\Delta}(\c{C}^{\times}))^{\simeq} \simeq \mathsf{RSeg}_{\Delta}(\c{C})^{\simeq}.
    \]
\end{cor}

\section{Module Morphisms in Span Categories}\label{sect:classificatin of module morphisms}
In the last section, we have constructed for every $\infty$-category with finite limits $\c{C}$ an equivalence between the space of bimodule objects in the monoidal $\infty$-category $\Span_{\Delta}(\c{C}^{\times})$ and the space of birelative 2-Segal objects. The goal of this section is to extend this to an equivalence of $\infty$-categories:

\begin{theorem}\label{thm:birseg}
    Let $\c{C}$ be an $\infty$-category with finite limits. There exists an equivalence of $\infty$-categories
    \[
        \mathsf{BiMod}(\Span_{\Delta}(\c{C}^{\times})) \simeq \operatorname{Bi2Seg_{\Delta}^{\leftrightarrow}}(\c{C}) 
    \]
    between the $\infty$-category of bimodule objects in $\Span_{\Delta}(\c{C}^{\times})$ and the $\infty$-category of birelative $2$-Segal objects and birelative $2$-Segal spans.
\end{theorem}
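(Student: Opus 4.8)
The plan is as follows: first, realize $\mathsf{BiMod}(\spa_{\Delta}(\c{C}^{\times}))$ as a full subcategory of a suitable functor $\infty$-category into $\spa(\c{C})$; second, describe this functor $\infty$-category --- its objects \emph{and} its morphisms --- by means of the twisted arrow category, via the general theory of functors into span categories; and third, match the resulting description with $\operatorname{Bi2Seg_{\Delta}^{\leftrightarrow}}(\c{C})$, using the previous section for the objects and carrying out a direct verification for the morphisms. For the first step, recall that a bimodule object in a monoidal $\infty$-category $\c{D}$ is the datum of a functor $F\colon\mathcal{BM}\to\c{D}$ out of a bimodule indexing category $\mathcal{BM}$ (a model of the bimodule $\infty$-operad via the simplex category $\Delta$) satisfying appropriate Segal conditions, and that a morphism of bimodule objects --- in the total $\infty$-category, where the underlying left and right algebras are allowed to vary --- is nothing but a natural transformation of such functors. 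Hence $\mathsf{BiMod}(\c{D})$ is a full subcategory of $\mathrm{Fun}(\mathcal{BM},\c{D})$; and for $\c{D}=\spa_{\Delta}(\c{C}^{\times})$, whose monoidal structure is Cartesian, no $\mathrm{Fin}_*$-bookkeeping beyond the $\Delta$-picture is needed, so $\mathsf{BiMod}(\spa_{\Delta}(\c{C}^{\times}))$ is the full subcategory of Segal-local functors inside $\mathrm{Fun}(\mathcal{BM},\spa(\c{C}))$.

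The second step uses the description of functors into a span category: for a small $\infty$-category $\c{K}$ and $\c{C}$ with finite limits there is an equivalence between $\mathrm{Fun}(\c{K},\spa(\c{C}))$ and the $\infty$-category of Beck--Chevalley functors $\mathrm{Tw}(\c{K})^{\mathrm{op}}\to\c{C}$, namely those carrying the distinguished squares of the twisted arrow category to pullback squares. What makes this useful here is that it also encodes morphisms: since $\mathrm{Tw}(\c{K}\times[1])\simeq\mathrm{Tw}(\c{K})\times\mathrm{Tw}([1])$ and $\mathrm{Tw}([1])$ is the cospan $\bullet\to\bullet\leftarrow\bullet$, a morphism $F\Rightarrow F'$ in $\mathrm{Fun}(\c{K},\spa(\c{C}))$ amounts to a span $\widehat F\leftarrow\widehat G\to\widehat{F'}$ of functors $\mathrm{Tw}(\c{K})^{\mathrm{op}}\to\c{C}$ satisfying the Beck--Chevalley condition. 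Specializing to $\c{K}=\mathcal{BM}$ and restricting to Segal-local objects, the objects on the two sides of the desired equivalence are already matched by the previous section, while a morphism of bimodule objects is turned by this machinery into a span of $\mathrm{Tw}(\mathcal{BM})^{\mathrm{op}}$-diagrams in $\c{C}$ with a prescribed exactness. The theorem then follows once these spans are identified with the birelative $2$-Segal spans, essential surjectivity being automatic from the fact that the object-level comparison is already an equivalence on maximal subgroupoids.

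This last identification is where the real work, and the main obstacle, lies. One must (i) make the combinatorics of $\mathrm{Tw}(\mathcal{BM})^{\mathrm{op}}$ explicit and relate it to the indexing category underlying the definition of birelative $2$-Segal objects; (ii) translate the Segal conditions on the two endpoints together with the Beck--Chevalley condition on the connecting term $\widehat G$ into the $2$-Segal exactness conditions and the prescription of which legs of the span must be ``Segal-type'' that define a birelative $2$-Segal span --- a finite but delicate diagram chase in $\c{C}$; and (iii) check that the comparison functor so produced restricts on maximal subgroupoids to the equivalence of the previous section, which is automatic because both are built by applying the universal property of $\spa$ pointwise and hence cannot disagree on underlying groupoids. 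A minor point to keep track of throughout is the correct bookkeeping of the Cartesian monoidal structure on $\spa_{\Delta}(\c{C}^{\times})$ when passing to the $\Delta$-model, though this should be routine given the earlier sections. As an alternative, one could try to bootstrap directly from the previous section by expressing the mapping spaces on both sides as limits over $\c{C}$ of the space-level equivalence applied to slice or arrow categories, but the twisted-arrow route is more transparent and is the one I would pursue.
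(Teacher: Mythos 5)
Your overall strategy coincides with the paper's. The paper also describes (co)bimodule data in $\spa_{\Delta}(\c{C}^{\times})$ through functors out of a product of twisted arrow categories: an $n$-simplex of $\mathsf{CoBiMod}(\spa_{\Delta}(\c{C}^{\times}))$ is unwound into a map $\tw(\Delta^{n})\times(\tw(\Delta_{/[1]})\times_{\Delta}\Delta^{\coprod})\to\c{C}$ subject to Segal-cone pullback conditions, which is exactly your ``Beck--Chevalley functor out of $\tw(\c{K}\times\Delta^{n})\simeq\tw(\c{K})\times\tw(\Delta^{n})$'' picture specialized to the $\Delta_{/[1]}$-model of the bimodule indexing category. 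So the route is not different; the question is whether you have actually walked it.

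Two things are missing, and the second is the substance of the proof. First, your morphism-level analysis stops at $\tw([1])$: a span $\widehat F\leftarrow\widehat G\to\widehat{F'}$ only captures $1$-morphisms, and essential surjectivity on objects plus a bijection on morphisms does not give an equivalence of $\infty$-categories. You must compare mapping spaces, i.e.\ handle $\tw(\Delta^{n})$ for all $n$; the paper works simplex by simplex for precisely this reason. Second, and more seriously, your step (ii) --- ``translate the Beck--Chevalley condition into the birelative $2$-Segal span conditions, a finite but delicate diagram chase'' --- is where the entire mathematical content lies, and you give no indication of how to carry it out. The difficulty is that the Beck--Chevalley/Segal-cone condition imposes a pullback square for \emph{every} composable pair in $\Delta_{/[1]}\times\Delta^{n}$ and every subinterval, whereas the birelative $2$-Segal span conditions are only the instances with adjacent indices $i,i+1$ and morphisms of special active or inert shapes. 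The paper bridges this with three nontrivial lemmas: Lemma~\ref{lem:aux} shows, via a pasting-lemma argument on a fourfold composite of spans, that it suffices to check the condition on four special classes of $2$-simplices (purely in the $\Delta_{/[1]}$-direction, purely in the $\Delta^{n}$-direction, and the two mixed orders), after first verifying an auxiliary pullback using the factorization through $\mathsf{Bim}_{Sp}(\c{C})$; Lemmas~\ref{lemma:aux2} and~\ref{lem:aux3} then reduce from arbitrary $i<j$ to $j=i+1$ by induction on $j-i$, again by pasting. Only after these reductions do the remaining conditions literally become the active equifibered and relative Segal conditions. None of this is automatic or a routine chase, so the proposal as it stands identifies the correct framework but does not prove the theorem.
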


As a corollary of the above characterization, we obtain a similar characterization for the $\infty$-category of left (resp. right) module objects and algebra objects in $\Span_{\Delta}(\c{C}^{\times})$.

 \begin{cor}\label{cor:lmod}
    Let $\c{C}$ be an $\infty$-category with finite limits. There exists an equivalence 
    \[
        \LMod(\Span_{\Delta}(\c{C}^{\times})) \simeq \operatorname{L2Seg_{\Delta}^{\leftrightarrow}}(\c{C})
    \]
    between the $\infty$-category of left modules in $\Span_{\Delta}(\c{C}^{\times})$ and the $\infty$-category of left relative $2$-Segal objects and left relative $2$-Segal spans.
\end{cor}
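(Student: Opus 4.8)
The plan is to deduce the corollary from Theorem~\ref{thm:birseg} by realising left modules, resp.\ left relative $2$-Segal objects, as bimodules, resp.\ birelative $2$-Segal objects, whose right-hand part is trivial, and then restricting the equivalence of Theorem~\ref{thm:birseg} to these matched subcategories. On the module side we use the standard fact about modules over algebras in a monoidal $\infty$-category $\mathcal D$: a left module over an algebra $A$ is the same datum as an $(A,\mathbf 1_{\mathcal D})$-bimodule, the right action of the unit algebra being forced to be the unit equivalence; this is natural in all variables and promotes to an equivalence
\[
    \lmod(\mathcal D)\;\simeq\;\mathsf{BiMod}(\mathcal D)\times_{\mathrm{Alg}(\mathcal D)}\{\mathbf 1_{\mathcal D}\},
\]
where $\mathsf{BiMod}(\mathcal D)\to\mathrm{Alg}(\mathcal D)$ remembers the algebra acting on the right (and forgets the module and the left algebra) and $\{\mathbf 1_{\mathcal D}\}$ is the full subcategory spanned by the unit algebra, which is contractible since $\mathbf 1_{\mathcal D}$ is initial in $\mathrm{Alg}(\mathcal D)$. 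We apply this with $\mathcal D=\spa_{\Delta}(\c{C}^{\times})$.

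On the combinatorial side the same principle should be visible directly from the construction of the relevant $\infty$-categories: there is a functor $\operatorname{Bi2Seg_{\Delta}^{\leftrightarrow}}(\c{C})\to\operatorname{2Seg_{\Delta}^{\leftrightarrow}}(\c{C})$ to the $\infty$-category of $2$-Segal objects and $2$-Segal spans, forgetting all the data of a birelative $2$-Segal object except its underlying right-hand $2$-Segal object (given by restriction along the inclusion of indexing categories that exhibits $2$-Segal objects as the right-hand copy inside birelative $2$-Segal objects), and one should have
\[
    \operatorname{L2Seg_{\Delta}^{\leftrightarrow}}(\c{C})\;\simeq\;\operatorname{Bi2Seg_{\Delta}^{\leftrightarrow}}(\c{C})\times_{\operatorname{2Seg_{\Delta}^{\leftrightarrow}}(\c{C})}\{\ast\},
\]
where $\ast$ denotes the trivial $2$-Segal object (the constant simplicial object on the terminal object of $\c{C}$; equivalently, the object corresponding under the algebra case of Theorem~\ref{thm:birseg} to the trivial algebra). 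The content is that a birelative $2$-Segal object whose right-hand $2$-Segal object is terminal carries no information there, so its restriction to the left-relative indexing category is a left relative $2$-Segal object; conversely a left relative $2$-Segal object extends essentially uniquely by the trivial right-hand $2$-Segal object, and the birelative $2$-Segal conditions (together with the span conditions) reduce to the left relative ones. This step is a direct comparison of the explicit indexing categories and relative Segal conditions.

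It then remains to check that the equivalence of Theorem~\ref{thm:birseg} is compatible with these two ``extract the right-hand part'' functors, fitting into a commuting square over the algebra case of the correspondence $\mathrm{Alg}(\spa_{\Delta}(\c{C}^{\times}))\simeq\operatorname{2Seg_{\Delta}^{\leftrightarrow}}(\c{C})$; under that equivalence the trivial algebra necessarily goes to the trivial $2$-Segal object, since equivalences of $\infty$-categories preserve initial objects. Pasting the two pullback squares above then yields the asserted equivalence
\[
    \lmod(\spa_{\Delta}(\c{C}^{\times}))\;\simeq\;\operatorname{L2Seg_{\Delta}^{\leftrightarrow}}(\c{C}).
\]
The main obstacle -- the only non-formal point -- is exactly this compatibility of the equivalence of Theorem~\ref{thm:birseg} with passage to the right-hand part. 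If the construction proving Theorem~\ref{thm:birseg} is assembled componentwise over the birelative indexing category, the compatibility is immediate by restricting it along the inclusion above; otherwise one re-runs the proof of Theorem~\ref{thm:birseg} with the right-hand $2$-Segal object (equivalently, the right algebra) held trivial throughout. The right-module case of the analogous statement is symmetric, trivialising the left-hand part instead.
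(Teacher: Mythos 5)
The paper offers no separate argument for this corollary: it is stated as an immediate consequence of Theorem~\ref{thm:birseg}, the implicit point being that the entire proof of that theorem (and of Lemmas~\ref{lem:aux}, \ref{lemma:aux2}, \ref{lem:aux3}) is carried out uniformly in the indexing category $\tw(\Delta_{/[1]})\times_{\Delta}\Delta^{\coprod}$ and restricts verbatim to the sub-indexing category governing left modules. Your route --- realising $\lmod$ as the fiber of $\mathsf{BiMod}$ over the trivial algebra and matching it against the corresponding fiber on the $2$-Segal side --- is a legitimate alternative, and you correctly locate where its content lies; but the two identifications on which it rests are asserted rather than proven, and they are not free. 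The equivalence $\lmod(\mathcal{D})\simeq\mathsf{BiMod}(\mathcal{D})\times_{\alg(\mathcal{D})}\{\mathbf{1}\}$ is a theorem about Lurie's operadic models of left modules and bimodules (\emph{Higher Algebra}, \S4.3.2); the categories in this paper are built from explicit simplicial indexing categories, so you must either identify them with the operadic models or reprove the fiber statement in this setting. Likewise the identity $\operatorname{L2Seg_{\Delta}^{\leftrightarrow}}(\c{C})\simeq\operatorname{Bi2Seg_{\Delta}^{\leftrightarrow}}(\c{C})\times_{\tsegspan{\c{C}}}\{\ast\}$ has to be verified on morphisms as well as objects: a birelative $2$-Segal span between objects with terminal right-hand part has as its right-hand component a $2$-Segal span with both feet terminal, and one needs the equifibered condition to force its tip to be terminal before the remaining data reduces to a left relative $2$-Segal span.

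The genuine gap is the compatibility square. Pasting your two pullbacks requires the equivalence of Theorem~\ref{thm:birseg} to commute with the two ``extract the right-hand part'' functors over the algebra-level equivalence of Corollary~\ref{cor:2seg}, and you leave this conditional, with the fallback ``otherwise one re-runs the proof of Theorem~\ref{thm:birseg} with the right-hand part held trivial.'' That fallback concedes the strategy: if one re-runs the proof over the left-module indexing category, the fiber-product scaffolding is superfluous, and that direct restriction is evidently what the authors intend. The compatibility does in fact hold --- both forgetful functors are restrictions along inclusions of indexing categories, and the equivalence of Theorem~\ref{thm:birseg} is constructed by matching limit conditions on functors out of those categories --- but this must be stated and checked, not hedged. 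As written, your argument establishes the corollary only modulo this unverified commutativity.
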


\begin{cor}\label{cor:2seg}
    Let $\c{C}$ be an $\infty$-category with finite limits. There exists an equivalence 
    \[
        \Alg(\Span_{\Delta}(\c{C}^{\times})) \simeq \TSegSpan(\c{C})
    \]
    between the $\infty$-category of algebras in $\Span_{\Delta}(\c{C}^{\times})$ and the $\infty$-category of $2$-Segal objects and $2$-Segal spans.
\end{cor}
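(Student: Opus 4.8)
The plan is to deduce the corollary from Theorem~\ref{thm:birseg} by restricting the equivalence along compatible ``regular'' functors. On the algebraic side, sending an algebra $A$ to the bimodule ${}_AA_A$ in which $A$ acts on itself by multiplication on both sides defines a functor $\delta\colon \alg(\spa_{\Delta}(\c{C}^{\times})) \to \mathsf{BiMod}(\spa_{\Delta}(\c{C}^{\times}))$; this is restriction along the map of generalized operads $\mathbf{Assoc}^{\otimes} \to \mathbf{BM}^{\otimes}$ collapsing the interval $[1]$ to the point $[0]$, so it is natural in $\c{C}$ and lands in the full subcategory of those $(A,M,B)$ whose structure maps canonically identify $M \simeq A \simeq B$. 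On the combinatorial side, the analogous functor $\delta'\colon \tsegspan{\c{C}} \to \operatorname{Bi2Seg_{\Delta}^{\leftrightarrow}}(\c{C})$ sends a $2$-Segal object $X$ to the birelative $2$-Segal object $(X,X,X)$ with its tautological bimodule structure, and a $2$-Segal span to the evident birelative $2$-Segal span.

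First I would establish that $\delta$ and $\delta'$ are fully faithful with essential image the respective full subcategory of ``regular'' objects. For $\delta'$ this should be a direct inspection of the definition of birelative $2$-Segal spans: between two regular objects $(X,X,X)$ and $(Y,Y,Y)$, the relative $2$-Segal conditions on the legs should force the bimodule component of a span to coincide with its algebra component, and both legs to be those of an honest $2$-Segal span $X \leftarrow W \to Y$, and similarly on higher simplices. Full faithfulness of $\delta$ is, I expect, the main obstacle: it fails for a general symmetric monoidal $\infty$-category --- already $1$-categorically the ``translation'' endomorphisms of a regular bimodule over an algebra are not identities --- so the argument must exploit the explicit model of $\spa_{\Delta}(\c{C}^{\times})$. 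Concretely, a morphism ${}_XX_X \to {}_YY_Y$ is a triple of spans indexed by the colours of $\mathbf{BM}$ compatible with the multiplication and unit operations, and one must check that the unit constraints of the left and right actions force the ``module'' span to agree with the ``algebra'' span, collapsing the mapping space onto the one in $\alg(\spa_{\Delta}(\c{C}^{\times}))$.

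It then remains to check that the equivalence of Theorem~\ref{thm:birseg} matches the essential images of $\delta$ and $\delta'$, equivalently that the square formed by $\delta$, $\delta'$ and the two equivalences commutes up to natural equivalence. This is the naturality of the construction of the preceding section with respect to restriction along $\mathbf{Assoc}^{\otimes} \to \mathbf{BM}^{\otimes}$ and its combinatorial counterpart, and it is apparent from the definition: the birelative $2$-Segal object produced from $\delta$ of a $2$-Segal span is computed by the same recipe as the $2$-Segal span, made constant in the additional simplicial direction. Restricting the equivalence of Theorem~\ref{thm:birseg} along $\delta$ and $\delta'$ then yields the asserted equivalence. Alternatively, since an algebra is in particular the regular left module over itself, one can run the same argument starting from Corollary~\ref{cor:lmod} and the map $\mathbf{Assoc}^{\otimes} \to \mathbf{LM}^{\otimes}$, which has the minor advantage of involving a single action.
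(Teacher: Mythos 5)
There is a genuine gap, and it is exactly at the point you flag as ``the main obstacle'': the regular (co)module functors $\delta$ and $\delta'$ are not fully faithful, and the span model does not rescue this. A morphism ${}_AA_A \to {}_BB_B$ in $\mathsf{BiMod}(\spa_{\Delta}(\c{C}^{\times}))$ lies over an arbitrary \emph{pair} of algebra morphisms $(f,g)\colon A\to B$ together with an $(f,g)$-intertwiner $A\to B$; the unit constraints you invoke only express the intertwiner in terms of $f$, $g$ and its value at the unit, they neither force $f\simeq g$ nor collapse the intertwiner onto the algebra component. The same failure is visible on the combinatorial side, where it is perhaps clearest: a morphism $(X,X,X)\to(Y,Y,Y)$ in $\operatorname{Bi2Seg_{\Delta}^{\leftrightarrow}}(\c{C})$ consists of two a priori independent $2$-Segal spans $F,G\colon X\to Y$ (the restrictions to the two algebra colours of $\Delta_{/[1]}$, i.e.\ the two constant maps $[1]\to[1]$) together with a relative span $M$ whose value at the module colour $\mathrm{id}_{[1]}$ is an essentially unconstrained span $X_{1}\leftarrow M\to Y_{1}$; the active-equifibered and relative Segal conditions only propagate this datum to higher simplices, they do not identify it with $F$ or $G$. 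So the essential image of $\delta'$, taken as a \emph{full} subcategory, has strictly larger mapping spaces than $\tsegspan{\c{C}}$, and restricting the equivalence of Theorem~\ref{thm:birseg} along $\delta$ and $\delta'$ does not produce the asserted equivalence. The same objection applies to your alternative via $\mathbf{Assoc}^{\otimes}\to\mathbf{LM}^{\otimes}$ and the regular left module.

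The intended derivation goes through the algebra \emph{colour} rather than the regular bimodule: $\mathbf{Assoc}^{\otimes}$ is the $[0]$-member of the family whose $[1]$-member is $\mathbf{BM}^{\otimes}$, so one replaces $\Delta_{/[1]}$ by $\Delta_{/[0]}\simeq\Delta$ throughout. Every step of the proof of Theorem~\ref{thm:birseg} --- the description of $n$-simplices as functors out of $\tw(\Delta^{n})\times(\tw(\Delta_{/[0]})\times_{\Delta}\Delta^{\coprod})$, Lemma~\ref{lem:aux}, and the analysis of conditions $(III)$ and $(IV)$ via Lemmas~\ref{lemma:aux2} and~\ref{lem:aux3} --- is insensitive to the choice of $[1]$ versus $[0]$; with no module colour present, the birelative conditions reduce precisely to the active-equifibered and $2$-Segal span conditions, which yields the corollary directly. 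If you do want to deduce the statement formally from Theorem~\ref{thm:birseg} rather than rerun its proof, the correct comparison functor is restriction along one of the inert inclusions $\{i\}\colon[0]\to[1]$ (matching the forgetful functors $\mathsf{BiMod}\to\alg$ and $\operatorname{Bi2Seg_{\Delta}^{\leftrightarrow}}(\c{C})\to\tsegspan{\c{C}}$), not the regular bimodule, and one then still has to argue that these restrictions are suitably surjective and conservative --- which is why specializing the proof is the cleaner route.
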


 Combining Corollary~\ref{cor:lmod} with Corollary~\ref{cor:lrelandrel}, we obtain

\begin{cor}\label{cor:rel2segismodule}
    Let $\c{C}$ be an $\infty$-category with finite limits. There exists an equivalence 
    \[
        \LMod(\Span_{\Delta}(\c{C}^{\times})) \simeq \operatorname{Rel2Seg_{\Delta}^{\leftrightarrow}}(\c{C})
    \]
    between the $\infty$-category of left modules in $\Span_{\Delta}(\c{C}^{\times})$ and the $\infty$-category of relative $2$-Segal objects.
\end{cor}

 Before we prove these statements, let us explicitly describe the relation between algebra morphisms and Segal spans at the level of lowest dimensional coherence. To this end, consider two algebra objects $(X_{1},\mu^{X})$ and $(Y_{1},\mu^{Y})$ in $\Span(\c{C}^{\times})$ with associated $2$-Segal objects $X_{\bullet}$ (resp. $Y_{\bullet}$). To construct the data of an algebra morphism $F:(X_{1},\mu^{X}) \xrightarrow{} (Y_{1},\mu^{Y})$ between those, we first have to provide a morphism on underlying objects. This is given by a span:
\[
\begin{tikzcd}
    & F_{1} \arrow[dr,"g_{1}"] \arrow[dl,swap, "f_{1}"] & \\
    X_{1} & & Y_{1}
\end{tikzcd}
\]
with tip denoted $F_{1}$.
For this span to be part of an algebra morphism, we have to provide higher coherence data. At the lowest level this is given by an invertible $2$-morphism 
\[
\alpha:F_{1}\circ \mu^{X} \simeq \mu^{Y} \circ (F_{1}\times F_{1})
\] in the $\infty$-category $\Span(\cC)$, i.e there has to exist an object $F_{2}\in \cC$ together with two invertible $1$-morphisms:
\begin{equation}\label{eq:1}
\begin{tikzcd}
& F_{2} \arrow[dr, "(\del_{1}^{F}{,}f_{2})"] \arrow[dl,swap, "(g_{2}{,}\del_{2}^{F}{,}\del_{0}^{F})"] & \\
Y_{2} \times_{Y_{1}\times Y_{1}} F_{1}\times F_{1} & & X_{2}\times_{X_{1}} F_{1}
\end{tikzcd}
\end{equation}  
fitting into a commutative diagram 
\[
\begin{tikzpicture}[auto]
    \node (A) at (0,0) {$X_{1}\times X_{1} $};
    \node (B) at (6,0) {$Y_{1} \times Y_{1}$};
    \node (C) at (12,0) {$Y_{1}$};
    \node (D) at (3.5,1.6) {$X_{2}$};
    \node (E) at (10,1.4) {$F_{1}$};
    \node (F) at (8,2) {$X_{2} \times_{X_{1}} F_{1}$};
    \node (G) at (3,1) {$F_{1}\times F_{1}$};
    \node (H) at (9,1) {$Y_{2}$};
    \node (I) at (6.5,1.6) {$Y_{2} \times_{Y_{1}\times Y_{1}} F_{1} \times F_{1}$};
    \node (J) at (7.3,2.8) {$F_{2}$};
    \node (K) at (6.25,0.75) {$X_{1}$};
    \node[scale=0.7] (L) at (7.3,2.4) {$\alpha$};
    \node[scale= 0.7] (M) at (7.6,1.2) {$f_{1}$};
    \draw[->] (D) to [bend right=10] node[swap,scale=0.7] {$(\del^{X}_{2},\del^{X}_{0})$} (A);
    \draw[->,dotted] (D) to [bend left= 15] node[swap, scale=0.7] {$\del^{X}_{1}$} (K);
    \draw[->,dotted] (E) to [bend right= 20]   (K);
    \draw[->] (E) [bend left= 10] to node[scale=0.7] {$g_{1}$} (C);
    \draw[dotted, ->] (F) to [bend right= 10] (D);
    \draw[->] (F) to [bend left= 10] node[scale=0.7] {$\pi_{2}$}(E);
    \draw[->] (G) to [bend right= 10] node[scale=0.7] {$f_{1}\times f_{1}$} (A);
    \draw[->] (G) to [bend left =10] node[swap,scale=0.7] {$g_{1}\times g_{1}$} (B);
    \draw[->] (H) to [bend right= 10] node[ scale=0.7] {$(\del^{Y}_{2},\del^{Y}_{0})$} (B);
    \draw[->] (H) to [bend left = 10] node[swap,scale=0.7] {$\del^{Y}_{1}$} (C);
    \draw[->] (I) [bend right = 10] to (G);
    \draw[->] (I) to [bend left =10] (H);
    \draw[->] (J) to [bend left = 10] node[scale= 0.7] {$(\del^{F}_{1},f_{2})$}  (F);
    \draw[->] (J) to [bend right= 20] node[swap, scale= 0.7] {$(g_{2},\del^{F}_{2},\del^{F}_{0})$}  (I);
    \draw[red, ->] (7.6,2.2) ..  controls (7.3,2.8)  and  (7.3,2.8) .. (6.75,1.9);
\end{tikzpicture}
\]
 As the notation suggests, these data define the $2$-simplices, and the $2$-dimensional face maps of the simplicial object $F_{\bullet}$, and the $2$-dimensional part of the maps $f_{\bullet}:F_{\bullet}\rightarrow X_{\bullet}$, and $g_{\bullet}:F_{\bullet}\rightarrow Y_{\bullet}$. The equivalences in \eqref{eq:1} form the lowest dimensional instances of the active equifibered and relative Segal conditions, respectively (see Definition~\ref{def:2-segspan}). Similarly, the higher simplices of the simplicial object $F_{\bullet}$, as well as the higher active equifibered and relative Segal conditions, imposed on morphisms $f_{\bullet}$ and $g_{\bullet}$ are encoded in the higher coherence data.

After these initial considerations, we now turn to the proof of Theorem~\ref{thm:birseg}. For this, we will need the following Lemma:
\begin{lemma}\label{lem:aux}
    Let $\c{C}$ be an $\infty$-category with finite limits and consider a morphism:
    \[
        F:\Tw(\Delta^{n}) \times (\Tw(\Delta_{/[1]})\times_{\Delta} \Delta^{\amalg})\xrightarrow{} \c{C}
    \]
    such that the associated morphism $\Tilde{F}:\Tw(\Delta^{n}) \xrightarrow{} \Fun((\Tw(\Delta_{/[1]})\times_{\Delta} \Delta^{\amalg}),\c{C})$ factors through $\mathsf{BMod}_{Sp}(\c{C})$. Then $F$ defines a morphism:
    \[
        F:\Delta_{/[1]} \times \Delta^{n} \xrightarrow{} \Span_{\Delta}(\c{C})
    \]
    if and only if for every $2$-simplex $\Delta^{2} \xrightarrow{} \Delta^{n}\times \Delta_{/[1]}$ of the form
    \begin{itemize}
        \item[$(1)$] $(f_{0}:[n_{0}]\xrightarrow{} [1],i) \xrightarrow{\phi_{1}} (f_{1}:[n_{1}]\xrightarrow{} [1],i) \xrightarrow{\phi_{2}} (f_{2}:[n_{2}]\xrightarrow{}[1],i) $
        \item[$(2)$] $(f_{0}:[n_{0}]\xrightarrow{} [1],i) \xrightarrow{i<j} (f_{0}:[n_{0}]\xrightarrow{} [1],j) \xrightarrow{j<k} (f_{0}:[n_{0}]\xrightarrow{}[1],k) $
        \item[$(3)$] $(f_{0}:[n_{0}]\xrightarrow{} [1],i) \xrightarrow{\phi_{1}} (f_{1}:[n_{1}]\xrightarrow{} [1],i) \xrightarrow{i<j} (f_{2}:[n_{2}]\xrightarrow{}[1],j)$
        \item[$(4)$] $(f_{0}:[n_{0}]\xrightarrow{} [1],i) \xrightarrow{i<j} (f_{0}:[n_{0}]\xrightarrow{} [1],j) \xrightarrow{\phi_{1}} (f_{1}:[n_{1}]\xrightarrow{}[1],j)$ 
    \end{itemize}
    the restriction $F|_{\Delta^{2}}$ is a Segal simplex.
\end{lemma}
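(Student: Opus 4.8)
The plan is to reduce the statement to a question about $2$-simplices and then settle that question by a grid refinement, together with the pasting lemma for pullback squares.

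First I would recall the description, set up earlier, of morphisms into span categories: a diagram of shape $\Delta_{/[1]}\times\Delta^{n}$ in $\spa_{\Delta}(\c{C})$ is the same datum as a functor out of the associated twisted, $\Delta^{\coprod}$-decorated diagram whose restriction to (the $\Delta^{\coprod}$-decoration of) the twisted arrow category of \emph{every} $2$-simplex $\Delta^{2}\to\Delta_{/[1]}\times\Delta^{n}$ is a Segal simplex. The point of this ``$2$-locality'' is that the pullback conditions cutting out the higher simplices of a span category follow, by pasting, from their $2$-dimensional instances; and the hypothesis that $\tilde F$ factors through $\mathsf{Bim}_{Sp}(\c{C})$ is precisely what guarantees that $F$ is a legitimate candidate, i.e.\ that its data along the $\tw(\Delta_{/[1]})\times_{\Delta}\Delta^{\coprod}$-direction already has span form. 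With this in place the claim becomes: $F|_{\Delta^{2}}$ is a Segal simplex for all $2$-simplices of $\Delta_{/[1]}\times\Delta^{n}$ if and only if this holds for those of types $(1)$--$(4)$. The forward direction is then immediate, since the four listed shapes are particular $2$-simplices and restricting a genuine functor into $\spa_{\Delta}(\c{C})$ to any $2$-simplex yields a Segal simplex by definition.

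For the converse I would first note that a $2$-simplex for which $\gamma_{1}$ or $\gamma_{2}$ is an identity is automatically a Segal simplex, composition with an identity span being a pullback along an identity; so it suffices to treat a nondegenerate $\sigma\colon x_{0}\xrightarrow{\gamma_{1}}x_{1}\xrightarrow{\gamma_{2}}x_{2}$ in $\Delta_{/[1]}\times\Delta^{n}$. The key elementary fact is that every morphism $(a,i)\to(b,j)$ of $\Delta_{/[1]}\times\Delta^{n}$ factors canonically as one that is the identity on the $\Delta^{n}$-coordinate followed by one that is the identity on the $\Delta_{/[1]}$-coordinate, that it also factors in the opposite order, and that two such ``pure'' morphisms in the two directions commute. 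Applying this to $\gamma_{1}$ and $\gamma_{2}$ and sliding the commuting pieces past each other exhibits $\sigma$ as the diagonal staircase of a commuting $3\times 3$ grid in $\Delta_{/[1]}\times\Delta^{n}$ whose every edge is pure, i.e.\ has constant $\Delta^{n}$-coordinate or constant $\Delta_{/[1]}$-coordinate. A direct inspection then shows that every elementary triangle of this grid is a $2$-simplex of one of the types $(1)$--$(4)$: the horizontal strips are of type $(1)$, the vertical ones of type $(2)$, and the mixed triangles realize the cases of types $(3)$ and $(4)$ in which the second, respectively first, edge is of pure $\Delta^{n}$-type. Hence $F$ restricts to a Segal simplex on each of them.

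Finally I would assemble these: as $F$ is a single functor on the whole twisted, $\Delta^{\coprod}$-decorated diagram, the images of the triangles of the grid fit together into one commuting diagram in $\c{C}$ in which every elementary square is a pullback; by the pasting lemma for pullback squares — equivalently, by associativity of span composition and the resulting stability of the Segal condition under such pasting — the diagonal composite, which is exactly $F|_{\sigma}$, is again a Segal simplex. Having verified the Segal condition on every $2$-simplex, the $2$-locality statement from the first step produces the desired morphism $\Delta_{/[1]}\times\Delta^{n}\to\spa_{\Delta}(\c{C})$. I expect the real work to lie in the second and third paragraphs: one must check with care that the triangles produced by the grid refinement genuinely land among the four listed shapes, and one must carry the $\Delta^{\coprod}$-decorations through the entire refinement so that the closing pasting argument takes place in $\spa_{\Delta}(\c{C})$ rather than merely in $\c{C}$.
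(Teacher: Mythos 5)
Your overall strategy -- refine a general $2$-simplex of $\Delta_{/[1]}\times\Delta^{n}$ into a staircase of ``pure'' edges and then paste -- is the same as the paper's, and your reduction to $2$-simplices and the treatment of the forward direction are fine. But there is a genuine gap in the pasting step. The Segal square of the original $2$-simplex $\sigma\colon(f_{0},i)\to(f_{1},j)\to(f_{2},k)$ has as its corner the value of $F$ on the long mixed edge $(\psi_{2},i\leq k)$, and to reach it by pasting you must decompose it into the \emph{unit squares} of the diagram $\tw(\Delta^{4})\to\c{C}$ obtained from the refined staircase $(f_{0},i)\to(f_{1},i)\to(f_{1},j)\to(f_{2},j)\to(f_{2},k)$. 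These unit squares are not all Segal squares of elementary triangles of your grid: among them is the square on $F(\phi_{2},i\leq k)\to F(\phi_{2},j\leq k)$, $F(\phi_{2},i\leq j)\to F(\phi_{2},j)$, in which the $\Delta_{/[1]}$-coordinate is \emph{frozen at the non-identity morphism} $\phi_{2}$ while the $\Delta^{n}$-coordinate runs over the Segal square of $i\to j\to k$. Such a square is not the Segal square of any $2$-simplex of $\Delta_{/[1]}\times\Delta^{n}$ (the composite of two copies of $\phi_{2}$ is not $\phi_{2}$), so it is not covered by any of the shapes $(1)$--$(4)$, and your claim that ``every elementary square of the assembled diagram is a pullback'' because ``every elementary triangle of the grid is of type $(1)$--$(4)$'' conflates these two kinds of squares. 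The same problem already appears for $\Delta^{3}$: knowing the Segal squares of the two inner elementary triangles of a $3$-simplex does not determine the Segal square of an outer face.

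The tell is that your proof of the converse never actually uses the hypothesis that $\tilde F$ factors through $\mathsf{Bim}_{Sp}(\c{C})$, whereas in the paper this hypothesis does real work exactly at this point: the first paragraph of the paper's proof establishes the constant-$\phi$ square separately, by using the inert/product decomposition coming from $\mathsf{Bim}_{Sp}(\c{C})$ to write it as a product of squares with identity first coordinate, each of which is an instance of condition $(2)$. Only after this auxiliary square is in hand does the paper run the pasting argument (its Figure~1), combining it with rectangles of types $(1)$, $(3)$, $(4)$. To repair your argument you would need to add this step (or an equivalent substitution argument deriving the constant-$\phi$ square from $(2)$, $(3)$, $(4)$, which again requires invoking more than the elementary triangles of your grid).
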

\begin{proof}
    The only if condition follows from the assumption. For the other direction, we first observe that for every morphism $\phi:f_{n_{0}} \xrightarrow{} f_{n_{1}}$ in $\Delta_{/[1]}$, every $i<j<k$, and every fixed subinterval $[l,m] \subset [n_{0}]$ the diagram
    \[
    \squarevert[{F(\phi,i<k,[l,m])}]{F(\phi,j<k,[l,m])}[{F(\phi,i<j,[l,m])}]{F(\phi,j,[l,m])}
    \]
    is a limit diagram. Indeed, since $\Tilde{F}$ factors through $\mathsf{Bim}_{Sp}(\c{C})$, the diagram is equivalent to a product of diagrams of the form
    \[
    \squarevert[{F(f_{n_{1}}|_{\phi(p),\phi(p+1)},i<k,[\phi(p),\phi(p+1)])}]{F(f_{n_{1}}|_{\phi(p),\phi(p+1)},j<k,[\phi(p),\phi(p+1)])}[{F(f_{n_{1}}|_{\phi(p),\phi_(p+1)},i<j,[\phi(p),\phi(p+1)])}]{F(f_{n_{1}}|_{\phi(p),\phi_(p+1)},j,[\phi(p),\phi(p+1)])}
    \]
    These are pullback diagrams due to condition $(II)$. Let $\sigma:\Delta^{2} \xrightarrow{} \Delta_{/[1]}\times \Delta^{n}$ be a $2$-simplex represented by a composable pair of morphisms:
    \[
        (f_{0}:[n_{0}]\xrightarrow{} [1],i) \xrightarrow{\phi_{1}} (f_{1}:[n_{1}]\xrightarrow{} [1],j) \xrightarrow{\phi_{2}} (f_{2}:[n_{2}]\xrightarrow{}[1],k). 
    \]
    By Proposition~\ref{prop:Segalcone} we need to check that for every interval $[l,m]\subset [n_{0}] $ the following square is a pullback diagram: 
    \[
    \squarevert[{F(\psi_{2},i<k,[l,m])}]{F(\phi_{2},j<k,[\phi_{1}(l),\phi_{1}(m)])}[{F(\phi_{1},i<j,[l,m])}]{F(f_{1},j,[\phi_{1}(l),\phi_{1}(m)])}
    \]
   \begin{figure}[t]
    \centering
    \includegraphics[page=1, width= \textwidth]{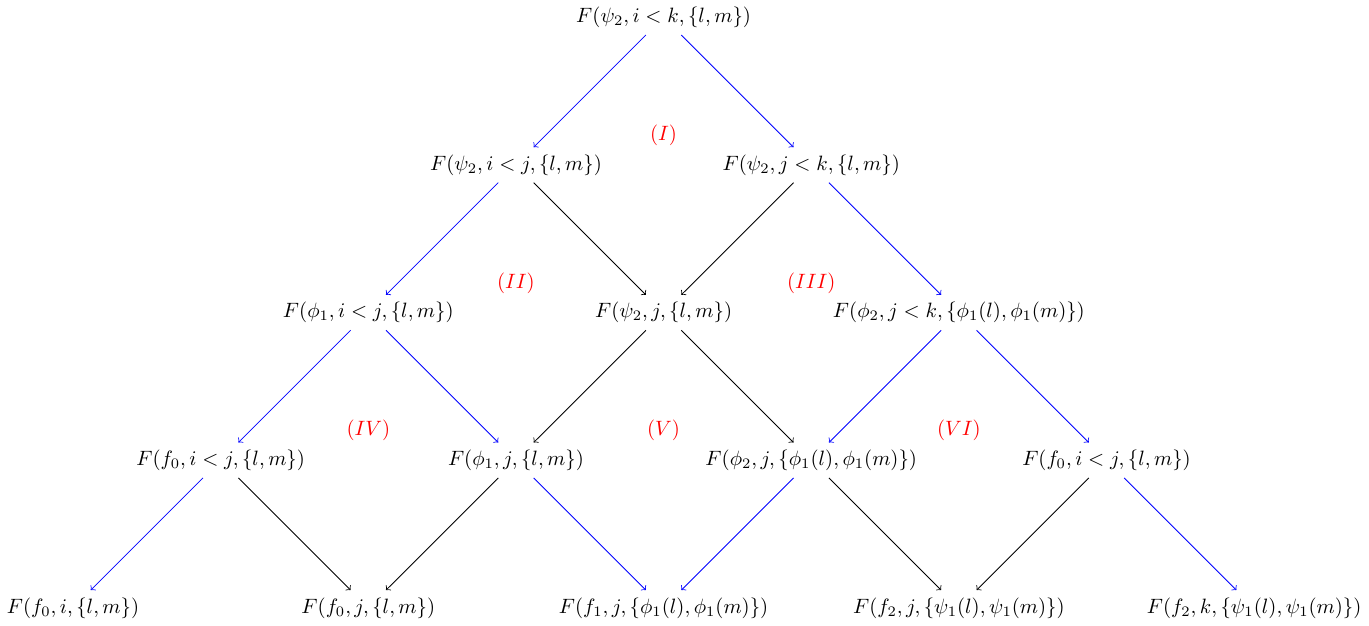}
    \caption{Image under $F$ of the decomposition of the $2$-simplex $\sigma$ into a $4$-simplex. The original $2$-simplex is colored blue. }
    \label{fig:compspan}
\end{figure}
   
  To check this, we include this diagram in a fourfold composite of spans shown in Figure~\ref{fig:compspan}. The square is depicted by the blue part in the diagram. We denote the composite rectangles, obtained by pasting two squares, by the sum of their labels. It follows from $(4)$, that the rectangles $(IV)$, $(IV)+(II)$, from (3) that the rectangles $(III)+(VI)$, $(VI)$, and from $(1)$ that the square $(V)$ are all pullback squares. Moreover, as shown above, the square $(I)$ is a pullback diagram. It follows from an iterated application of the pasting lemma that the blue square is a pullback square.
\end{proof}

\begin{proof}[Proof of Theorem~\ref{thm:birseg}]
We prove the equivalence $\mathsf{BicoMod}(\Span_{\Delta}(\c{C}^{\times})) \simeq  \operatorname{Bi2Seg_{\Delta}^{\leftrightarrow}}(\c{C})^{\op} $. The claimed result then follows from the equivalence $\mathsf{BicoMod}(\Span_{\Delta}(\c{C}^{\times}))\simeq \mathsf{BiMod}(\Span_{\Delta}(\c{C}^{\times}))^{\op}$.  

    Unraveling definitions and using Proposition~\ref{prop:Segalcone}, an $n$-simplex $\eta \in \mathsf{BicoMod}(\Span_{\Delta}(\c{C}^{\times}))_{n}$ is represented by a morphism
    \[
        \eta:\Tw(\Delta^{n}) \times (\Tw(\Delta_{/[1]})\times_{\Delta} \Delta^{\amalg}) \xrightarrow{} \c{C}
    \]
    such that 
    \begin{itemize}
        \item[(1)]  for every $0 \leq k \leq n$ the restriction $\eta_{\{k\}}:\{\id_{k}\}\times (\Tw(\Delta_{/[1]})\times_{\Delta} \Delta^{\amalg}) \xrightarrow{} \c{C} $ preserves inert morphisms, i.e. defines a bicomodule in $\Span_{\Delta}(\c{C}^{\times})$. 
        \item[(2)] for every 2-simplex $\Delta^{2} \xrightarrow{} \Delta_{/[1]} \times \Delta^{n}$ of the form of Lemma~\ref{lem:segalandlimits} depicted 
        \[
        (f_{0}:[n_{0}]\xrightarrow{} [1],i) \xrightarrow{\phi_{1}} (f_{1}:[n_{1}]\xrightarrow{} [1],j) \xrightarrow{\phi_{2}} (f_{2}:[n_{2}]\xrightarrow{}[1],k) 
        \]
        with $0\leq i \leq j\leq k \leq n$ and every subinterval $[p,q]\subset [n_{0}]$ the associated Segal cone diagram\footnote{See Definition~\ref{def:Segalcone}} is a limit diagram:
    \end{itemize}
    \begin{equation}\label{diag:proofpullback}
    \begin{gathered}
        \begin{tikzcd}
        & \eta(\psi_{2},[p,q],ik)\arrow[dr] \arrow[dl] & \\\eta(\phi_{2},[\phi_{1}(p),\phi_{1}(q)],jk) \arrow[dr] & & \eta(\phi_{1},[p,q],ij) \arrow[dl]\\
        & \eta(f_{1}, [\phi_{1}(p),\phi_{1}(q)], j) &
        \end{tikzcd}
    \end{gathered}
    \end{equation}
    The datum of an $n$-simplex in $\mathsf{BicoMod}(\Span_{\Delta}(\c{C}^{\times}))$ corresponds via adjunction to a functor 
    \[
    \eta~:~\Tw(\Delta^{n})~\xrightarrow{}~\Fun(\Tw(\Delta_{/[1]})\times_{\Delta} \Delta^{\amalg},\c{C}).
    \]
     We claim that $\eta$  factors through the full subcategory
    \[
        \BMod_{Sp}(\c{C}) \subset \Fun(\Tw(\Delta_{/[1]})\times_{\Delta} \Delta^{\amalg},\c{C})), 
    \]
    as defined in Definition~\ref{def:bimodseg}. For the evaluation of $\eta$ on objects of the form $ \id_{k}$, this follows from condition $(1)$ and $(2)$. Hence, let $i<j$ be an object of $\Tw(\Delta^{n})$. We need to check the conditions of Definition~\ref{def:bimodseg}. All conditions except condition $(3)$ of Definition~\ref{def:bimodseg} are satisfied by assumption. For this condition to hold it suffices to check that for every inert morphism $\phi:f^{0}_{n_{0}} \xrightarrow{} f^{1}_{n_{1}} \in \Delta_{/[1]}$ and every subinterval $[k,l] \subset [n_{0}]$ the morphisms
    \begin{itemize}
        \item[(i)] $\eta(\phi,[k,l],i<j) \xrightarrow{} \eta(f^{1}_{n_{1}},[\phi(k),\phi(l)],i<j)$ 
        \item[(ii)] $\eta(\phi,[k,l],i<j) \xrightarrow{} \eta(f^{0}_{n_{0}},[k,l],i<j)$
    \end{itemize}
    are equivalences. To this end, consider the following $2$-simplices:
    \begin{itemize}
        \item[(i)] $(f^{0}_{n_{0}},i) \xrightarrow{\phi} (f^{1}_{n_{1}},i) \xrightarrow{i<j} (f^{1}_{n_{1}},j) $
        \item[(ii)] $(f^{0}_{n_{0}},i) \xrightarrow{i<j} (f^{0}_{n_{0}},j) \xrightarrow{\phi} (f^{1}_{n_{1}},j) $
    \end{itemize}
    in $\Delta_{/[1]} \times_{\Delta} \Delta^{n}$. Since $\eta(-,-,i)\in \mathsf{Bim}_{Sp}(\c{C})$ for all $i\in [n]$, the pullback Diagrams~\ref{diag:proofpullback} associated by $\eta$ to these $2$-simplices exhibit the morphisms in $(i)$ and $(ii)$ as pullbacks of equivalences. Hence, they are themselves equivalences.

    Consequently, we can apply Lemma~\ref{lem:aux}. It therefore, suffices to analyze condition $(I)-(IV)$ of Lemma~\ref{lem:aux}. Condition $(I)$ and $(II)$ imply that $\eta$ descends to a morphism 
    \[
        \eta:\Delta^{n} \xrightarrow{} \Span(\mathsf{BMod}_{Sp}(\c{C})).
    \]
    Under the equivalence $\sf{BMod}_{Sp}(\c{C})\simeq\operatorname{\Fun^{bim}}(\Delta^{\star}_{1},\c{C})$ of Proposition~\ref{prop:Lisanequiv} condition $(III)$ reads as
    \[
    \squarevert[{\eta_{\phi_{1}(p) \leq \phi_{1}(p+1)}(ij) \times ... \times \eta_{\phi_{1}(q-1) \leq \phi_{1}(q)}(ij)}]{\eta_{\phi_{1}(p) \leq \phi_{1}(p+1)}(ij) \times ... \times \eta_{\phi_{1}(q-1) \leq \phi_{1}(q)}(j)}[{\eta_{p,p+1}(ij) \times ... \times \eta_{q-1,q}(ij)}]{\eta_{p,p+1}(j) \times ... \times \eta_{q-1,q}(j)}
    \]
    where the maps are the image under $\eta$ of the diagram 
    \[
    \squarevert[{f^{1}_{\{\phi_{1}(p)\leq \phi_{1}(p+1)\}},...,f^{1}_{\{\phi_{1}(q-1)\leq \phi_{1}(q)\}},ij)}]{(f^{1}_{\{\phi_{1}(p)\leq \phi_{1}(p+1)\}},...,f^{1}_{\{\phi_{1}(q-1)\leq \phi_{1}(q)\}},j)}[{(f^{0}_{\{p,p+1\}},...,f^{0}_{\{q-1,q\}},ij)}]{(f^{0}_{\{p,p+1\}},...,f^{0}_{\{q-1,q\}},j)}
    \]
    in the category $\Delta^{\ast}_{1} \times \Tw(\Delta)$.
    Similarly, condition $(IV)$ is given by
    \[
    \squarevert[{\eta_{\phi_{1}(p) \leq \phi_{1}(p+1)}(ij) \times ... \times \eta_{\phi_{1}(q-1) \leq \phi_{1}(q)}(ij)}]{\eta_{\phi_{1}(p),\phi_{1}(p)+1}(ij)\times ... \times \eta_{\phi_{1}(q)-1,...,\phi_{1}(q)}(ij)}[{\eta_{\phi_{1}(p) \leq \phi_{1}(p+1)}(i) \times ... \times \eta_{\phi_{1}(q-1) \leq \phi_{1}(q)}(i)}]{\eta_{\phi_{1}(p),\phi_{1}(p)+1}(i)\times ... \times \eta_{\phi_{1}(q)-1,\phi_{1}(q)}(i)}
    \]
    Observe that the above diagrams are products of individual square-shaped diagrams. It, therefore, suffices to show that each such square is a pullback diagram. For condition $(III)$, we therefore need to check that for every $p\leq r \leq q-1$ the diagram
    \[
    \squarevert[{\eta_{\phi_{1}(r) \leq \phi_{1}(r+1)}(ij)}]{\eta_{\phi_{1}(r) \leq \phi_{1}(r+1)}(j)}[{\eta_{\{r,r+1\}}(ij)}]{\eta_{\{r,r+1\}}(j)}
    \]
    is a pullback square. Similarly, for condition $(IV)$, we need to check that the diagram:
   \[
    \squarevert[{\eta_{\phi_{1}(r)\leq \phi_{1}(r+1)}(ij)}]{\eta_{\{\phi_{1}(r),\phi_{1}(r)+1\}}(ij)\times... \eta_{\{\phi_{1}(r+1)-1,\phi_{1}(r+1)\}}(ij)}[{\eta_{\phi_{1}(r)\leq \phi_{1}(r+1)}(i)}]{\eta_{\{\phi_{1}(r),\phi_{1}(r)+1\}}(i) \times... \eta_{\{\phi_{1}(r+1)-1,\phi_{1}(r+1)\}}(i)}
    \]
    is a pullback diagram.
    Under the equivalence of Proposition~\ref{prop:bimisbiseg}, the map $\eta$ identifies with a functor $\eta:\Tw(\Delta^{n}) \xrightarrow{} \Fun(\Delta_{/[1]}^{\op},\c{C})$. We denote the value of $\eta$ on an object $i\leq j \in \Tw(\Delta^{n})$ by $M^{i,j}_{\bullet}$, if $i<j$ and by $X^{i}_{\bullet}$, if $i=j$. 

    Under this equivalence condition $(III)$ translates into the condition that for every $f_{l} \in \Delta^{\op}_{/[1]}$ and $0\leq i<j\leq n$ the diagram
    \[
    \squarevert[{M^{i,j}_{f_{l}}}]{X^{j}_{f_{l}}}[{M^{i,j}_{f_{\{0,l\}}}}]{ X^{j}_{f_{\{0,l\}}}}
    \]
    is a pullback diagram. Similarly, condition $(IV)$ translates into the condition that for every $f_{l} \in \Delta^{\op}_{/[1]}$ and $0\leq i<j\leq n$ the diagram:
    \[
    \squarevert[{M^{i,j}_{f_{l}}}]{X_{f_{l}}^{i}}[{M^{i,j}_{f_{\{0,1\}}} \times ... \times M^{i,j}_{f_{\{l-1,l\}}}}]{X_{f_{\{0,1\}}}^{i} \times ... \times X_{f_{\{l-1,l\}}}^{i}}
    \]
    is pullback diagrams. By Lemma~\ref{lemma:aux2} and Lemma~\ref{lem:aux3}, the conditions suffice to hold for $j=i+1$. But these are precisely the active equifibered and relative Segal conditions. This finishes the proof.
\end{proof}

\begin{lemma}\label{lemma:aux2}
    Let $\Tilde{\eta}:\Delta^{n} \xrightarrow{} \Span(\Fun(\Delta_{/[1]}^{\op},\c{C}))$ be a functor with adjoint $\eta:\Tw(\Delta^{n}) \xrightarrow{} \Fun(\Delta_{/[1]}^{\op},\c{C})$ and denote by $M^{i,j}_{\bullet}$ the value of $\eta$ on the object $i \rightarrow j \in \Tw(\Delta^{n})$ if $i<j$ and by $X^{i}_{\bullet}$, if $i=j$. Further, let $f_{l}: [l] \rightarrow [1]$ be an object of $\Delta_{/[1]}$. Then the following are equivalent:
    \begin{itemize}
    \item[(1)] the diagram
    \[
    \squarevert[{M^{i,j}_{  f_{l}}}]{X^{j}_{f_{l}}}[{M^{i,j}_{f_{\{0,l\}}}}]{ X^{j}_{f_{\{0,l\}}}}
    \]
    is a pullback diagram for all $0\leq i<j \leq n$.
    \item[(2)] the diagram
    \[
    \squarevert[{M^{i,i+1}_{  f_{l}}}]{X^{i+1}_{f_{l}}}[{M^{i,i+1}_{f_{\{0,l\}}}}]{ X^{i+1}_{f_{\{0,l\}}}}
    \]
    is a pullback diagram for all $0\leq i < n$.
    \end{itemize}
    
\end{lemma}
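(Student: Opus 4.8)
The implication $(1)\Rightarrow(2)$ is the special case $j=i+1$, so the content is the converse $(2)\Rightarrow(1)$, which I will prove by induction on $j-i$; the base case $j-i=1$ is precisely $(2)$.

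For the inductive step, fix $i<j$ with $j-i\ge 2$ and choose any $m$ with $i<m<j$. The structural input is that $\Tilde{\eta}$ is a diagram in a span category: by the description of such diagrams through the twisted arrow category (Proposition~\ref{prop:Segalcone}), for every $i\le m\le j$ the Segal square with top corner $M^{i,j}_{\bullet}$, side corners $M^{m,j}_{\bullet}$ and $M^{i,m}_{\bullet}$, and bottom corner $X^{m}_{\bullet}$ is cartesian in $\func(\Delta_{/[1]}^{op},\c{C})$. Since limits there are computed pointwise, evaluating at $f_{l}$ gives an equivalence $M^{i,j}_{f_{l}}\simeq M^{i,m}_{f_{l}}\times_{X^{m}_{f_{l}}}M^{m,j}_{f_{l}}$, and likewise with $f_{\{0,l\}}$ in place of $f_{l}$, compatibly with the legs of the spans and with the presheaf structure maps attached to $f_{\{0,l\}}\to f_{l}$. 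Under these identifications the square of $(1)$ for $[i,j]$ is the outer rectangle of
\[
\begin{tikzcd}[column sep=small]
M^{i,m}_{f_{l}}\times_{X^{m}_{f_{l}}}M^{m,j}_{f_{l}} \arrow[r]\arrow[d] & M^{m,j}_{f_{l}} \arrow[r]\arrow[d] & X^{j}_{f_{l}} \arrow[d]\\
M^{i,m}_{f_{\{0,l\}}}\times_{X^{m}_{f_{\{0,l\}}}}M^{m,j}_{f_{\{0,l\}}} \arrow[r] & M^{m,j}_{f_{\{0,l\}}} \arrow[r] & X^{j}_{f_{\{0,l\}}}
\end{tikzcd}
\]
whose right-hand square is the square of $(1)$ for the strictly shorter interval $[m,j]$ and hence a pullback by the inductive hypothesis.

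It remains to see that the left-hand square is a pullback. Invoking the inductive hypothesis for the interval $[i,m]$ we may rewrite $M^{i,m}_{f_{l}}\simeq X^{m}_{f_{l}}\times_{X^{m}_{f_{\{0,l\}}}}M^{i,m}_{f_{\{0,l\}}}$; substituting this into the top-left corner and using the identity $(A\times_{C}B)\times_{A}E\simeq E\times_{C}B$ identifies it with $M^{i,m}_{f_{\{0,l\}}}\times_{X^{m}_{f_{\{0,l\}}}}M^{m,j}_{f_{l}}$, the map $M^{m,j}_{f_{l}}\to X^{m}_{f_{\{0,l\}}}$ being the composite of the span leg with the presheaf structure map. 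In this form the top-left corner is visibly the pullback of $M^{m,j}_{f_{l}}\to M^{m,j}_{f_{\{0,l\}}}$ along the projection $M^{i,m}_{f_{\{0,l\}}}\times_{X^{m}_{f_{\{0,l\}}}}M^{m,j}_{f_{\{0,l\}}}\to M^{m,j}_{f_{\{0,l\}}}$, so the left-hand square is a pullback by the cancellation part of the pasting lemma. Two further applications of the pasting lemma show that the outer rectangle, and therefore the square of $(1)$ for $[i,j]$, is a pullback, which completes the induction.

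I expect the only genuine difficulty to be bookkeeping: one must check that the span legs, the presheaf structure maps attached to $f_{\{0,l\}}\to f_{l}$, and the composition equivalences $M^{i,j}_{\bullet}\simeq M^{i,m}_{\bullet}\times_{X^{m}_{\bullet}}M^{m,j}_{\bullet}$ are mutually compatible, so that the two squares above really glue into the claimed rectangle, i.e. that these equivalences are natural and not merely objectwise. This compatibility is precisely what Proposition~\ref{prop:Segalcone} provides; granting it, the remaining steps are formal pullback manipulations.
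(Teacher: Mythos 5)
Your proof is correct and follows essentially the same route as the paper's: induction on $j-i$, splitting $M^{i,j}_{\bullet}$ at an intermediate vertex via the cartesian Segal cone squares (the paper takes $m=j-1$, so the right-hand piece is the base case and only the left-hand piece uses the inductive hypothesis) and then assembling the pieces with the pasting lemma. The explicit fiber-product identity you invoke for the left-hand square is just the paper's pair of pasting/cancellation steps on its $3\times 2$ rectangle written out in formulas.
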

\begin{proof}
    (1) implies (2) by assumption. To prove the converse, we do induction on the difference $p=j-i$. The claim is true for $p=1$ by assumption. Assume that the result holds for $p-1$. Let $0\leq i<j\leq n$ be objects of $\Delta^{n}$ with $j-i=p$. We denote $j-1$ by $k$. Consider the diagram:
    \[
        \begin{xy}
            \xymatrix{X^{j}_{f} \ar[d] & M^{k,j}_{f} \ar[l] \ar[d] & M^{i,j}_{f} \ar[l] \ar[d] \\
            X_{f_{\{0,l\}}}^{j}  & M_{f_{\{0,l\}}}^{k,j} \ar[l] \ar[d] & M^{i,j}_{f_{\{0,l\}}} \ar[l] \ar[d]\\
            &  X_{f_{\{0,l\}}}^{k} &  M_{f_{\{0,l\}}}^{i,k}\ar[l] 
            }
        \end{xy}
    \]
    The upper left square is a pullback diagram by the induction hypothesis, and the lower square is a pullback diagram by the definition of $\eta$. To show that the big horizontal rectangle is a pullback diagram, it, therefore suffices to show that the big vertical rectangle is a pullback diagram. To show this, we factor the vertical rectangle as
    \[
        \begin{xy}
            \xymatrix{M_{f}^{k,j}  \ar[d] & M_{f}^{i,j}\ar[d] \ar[l] \\
            X_{f}^{k} \ar[d] & M_{f}^{i,k} \ar[l] \ar[d]\\
            X_{f_{\{0,l\}}}^{k} & M_{f_{\{0,l\}}}^{i,k} \ar[l] \\
            }
        \end{xy}
    \]
    The lower square is a pullback square by the induction hypothesis, and the upper square is a pullback square by the definition of $\eta$. It follows that the outer square is a pullback square. This finishes the proof.
\end{proof}    

    \begin{lemma}\label{lem:aux3}
    Let $\Tilde{\eta}:\Delta^{n} \xrightarrow{} \Span(\Fun(\Delta_{/[1]}^{\op},\c{C}))$ be a functor with adjoint $\eta:\Tw(\Delta^{n}) \xrightarrow{} \Fun(\Delta_{/[1]}^{\op},\c{C})$ and denote the value of $\eta$ on the object $i \rightarrow j \in \Tw(\Delta^{n})$  by $M^{i,j}_{\bullet}$  if $i<j$ and by $X^{i}_{\bullet}$ if $i=j$. Further, let $f_{l}: [l] \rightarrow [1]$ be an object of $\Delta_{/[1]}$. Then the following are equivalent:
    \begin{itemize}
    \item[(1)] the diagram
    \[
    \squarevert[{M^{i,j}_{f_{l}}}]{X_{f_{l}}^{i}}[{M^{i,j}_{f_{\{0,1\}}} \times ... \times M^{i,j}_{f_{\{l-1,l\}}}}]{X_{f_{\{0,1\}}}^{i} \times ... \times X_{f_{\{l-1,l\}}}^{i}}
    \]
    is a pullback diagram for all $0\leq i<j \leq n$.
    \item[(2)] the diagram
    \[
    \squarevert[{M^{i,i+1}_{f_{l}}}]{X_{f_{l}}^{i}}[{M^{i,i+1}_{f_{\{0,1\}}} \times ... \times M^{i,i+1}_{f_{\{l-1,l\}}}}]{X_{f_{\{0,1\}}}^{i} \times ... \times X_{f_{\{l-1,l\}}}^{i}}
    \]
    is a pullback diagram for all $0\leq i < n$.
    \end{itemize}
    
\end{lemma}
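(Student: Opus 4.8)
The plan is to mimic the proof of Lemma~\ref{lemma:aux2} essentially verbatim, running the induction on $p = j-i$ but with the square shapes coming from condition $(IV)$ rather than condition $(III)$. The forward implication is again immediate, so the content is in showing that the $(i+1)$-indexed squares force all the $(i,j)$-indexed squares. The key structural input is that $\eta$ factors through $\spa(\func(\Delta_{/[1]}^{op},\c{C}))$, so for any $i < k < j$ the square
\[
\squarevert[{M^{i,j}_{g}}]{M^{k,j}_{g}}[{M^{i,k}_{g}}]{X^{k}_{g}}
\]
is a pullback for every object $g \in \Delta_{/[1]}$; this is the ``definition of $\eta$'' pasting square that was used repeatedly in Lemma~\ref{lemma:aux2}.

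First I would set up the base case $p=1$, which is exactly the hypothesis. For the inductive step, fix $0 \leq i < j \leq n$ with $j - i = p$ and write $k = j-1$, so $j - k = 1$ and $k - i = p-1$. The goal is that the square with horizontal target $X^{i}_{f_{l}}$ and bottom row $M^{i,j}_{f_{\{0,1\}}} \times \cdots \times M^{i,j}_{f_{\{l-1,l\}}} \to X^{i}_{f_{\{0,1\}}} \times \cdots \times X^{i}_{f_{\{l-1,l\}}}$ is a pullback. I would assemble a $2 \times 3$-type pasting diagram analogous to the one in Lemma~\ref{lemma:aux2}: one axis interpolates through the intermediate vertex $k$ using the spatial pullback squares for $\eta$ (for the object $f_l$ and for each of the edge objects $f_{\{r,r+1\}}$ simultaneously, hence for the product), and the other axis is the square-shaped diagram we want, now for the pair $(i,k)$ — which is a pullback by the induction hypothesis — together with the spatial square for the pair $(k,i)$ versus $(k,j)$. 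Two applications of the pasting lemma for pullback squares then give the $(i,j)$-square.

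The one point that needs a little care, and which I expect to be the main (mild) obstacle, is that the bottom corners here are \emph{products} $\prod_{r} M^{i,j}_{f_{\{r,r+1\}}}$ and $\prod_r X^{i}_{f_{\{r,r+1\}}}$ rather than single objects, whereas in Lemma~\ref{lemma:aux2} the bottom-left corner was a single object $M^{i,j}_{f_{\{0,l\}}}$. Since finite products are limits and the spatial pullback squares for $\eta$ (being limit diagrams in $\func(\Delta_{/[1]}^{op},\c{C})$, hence computed pointwise) are compatible with forming these products, the relevant auxiliary squares built out of the products are still pullbacks; concretely, the square
\[
\squarevert[{\textstyle\prod_r M^{i,j}_{f_{\{r,r+1\}}}}]{\textstyle\prod_r M^{k,j}_{f_{\{r,r+1\}}}}[{\textstyle\prod_r M^{i,k}_{f_{\{r,r+1\}}}}]{\textstyle\prod_r X^{k}_{f_{\{r,r+1\}}}}
\]
is a product of pullback squares, hence a pullback square. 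With that observation in hand the same pasting argument as in Lemma~\ref{lemma:aux2} goes through, so I would organize the write-up to reuse that diagrammatic bookkeeping and simply flag the product compatibility as the only new ingredient.
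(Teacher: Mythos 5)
Your proposal is correct and matches the paper's own proof, which likewise runs the induction on $p=j-i$ with base case $p=1$, pastes together the induction hypothesis for $(i,k)$, the base case for $(k,j)$ with $k=j-1$, and the span-composition (``definition of $\eta$'') pullback squares, exactly as in Lemma~\ref{lemma:aux2}. The one new ingredient you flag --- that the relevant auxiliary squares are now finite products of pointwise pullback squares and hence still pullbacks --- is precisely the only modification the paper makes.
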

\begin{proof}
    The proof is analogous to the one of Lemma~\ref{lemma:aux2}. By assumption, $(1)$ implies $(2)$. For the converse, we do induction on the difference $p=j-i$. The case $p=1$ follows by assumption. Assume therefore the result holds for $p-1$. Let $0\leq i < j\leq n$ be objects of $\Delta^{n}$ and denote $j-1$ by $k$. Consider the diagram:
    \[
        \begin{xy}
            \xymatrix{\prod_{m=1}^{l}M^{k,j}_{f_{\{m-1,m\}}} \ar[d] & \prod_{m=1}^{l}M^{i,j}_{f_{\{m-1,m\}}} \ar[l] \ar[d] & M^{i,j}_{f} \ar[l] \ar[d] \\
            \prod_{m=1}^{l}X^{k}_{f_{\{m-1,m\}}} & \prod_{m=1}^{l}M^{i,k}_{f_{\{m-1,m\}}} \ar[l] \ar[d] & M^{i,k}_{f} \ar[l] \ar[d] \\
            & \prod_{m=1}^{l}X^{i}_{f_{\{m-1,m\}}} & X_{f}^{i} \ar[l] \\  
            }
        \end{xy}
    \]
    It follows from the inductive hypothesis that the bottom square is 
    a pullback diagram. Further, the upper left square is a pullback by definition of $\eta$. Therefore, to show that the big vertical rectangle is a pullback diagram, it suffices to show that the big horizontal rectangle is a pullback. The horizontal rectangle factors as
    \[
        \begin{xy}
            \xymatrix{\prod_{m=1}^{l}M^{k,j}_{f_{\{m-1,m\}}} \ar[d] & M^{k,j}_{f} \ar[l] \ar[d] & M^{i,j}_{f} \ar[d] \ar[l] \\
            \prod_{m=1}^{l}X^{k}_{f_{\{m-1,m\}}} & X_{f}^{k} \ar[l]& M^{i,k}_{f} \ar[l] \\
            }
        \end{xy}
        \]
    The left square is a pullback square by the induction hypothesis and the right square is a pullback square by definition of $\eta$. Consequently, also the big horizontal square is a pullback diagram.
\end{proof}

\appendix
\section{Auxiliary Statements}\label{sect:Appendix}
The goal of this section is to present the technical proofs of Proposition~\ref{cor:condit} and \ref{prop:localization}. We have extracted these technical proofs from Section~\ref{sect:bimod} to maintain the flow of the argument presented there.

\subsection{Proof of Proposition~\ref{cor:condit}}\label{subsect:appendix1}

The goal of this subsection is to prove Proposition~\ref{cor:condit}. As a start, we unravel the datum of a bicomodule in $\mathsf{Span}_{\Delta}(\c{C}^{\times})$.  Such an object is given by a commutative triangle:
\[
\begin{tikzcd}
    \Delta_{/[1]} \arrow[rr, "F"] \arrow[dr] & & \Span_{\Delta}(\c{C}^{\times}) \arrow[dl] \\
    & \Delta & 
\end{tikzcd}
\]
such that $F$ preserves inert morphisms and the adjoint morphism $\Tilde{F}: \Tw(\Delta_{/[1]}) \xrightarrow{} \c{C}^{\times} $ maps every $n$-simplex $\Delta^{n} \xrightarrow{} \Delta_{/[1]}$ to a Segal simplex.\footnote{See Construction~\ref{constr:span}}

We analyze the second condition first. By the universal property of the Cartesian monoidal structure, $\Tilde{F}$ is equivalent to a functor
\[
    \bar{F}:\Tw(\Delta_{/[1]})\times_{\Delta}\Delta^{\amalg} \xrightarrow{} \c{C}.
\]
Similarly, an $n$-morphism $\eta:\Delta^{n} \rightarrow \Fun_{\Delta}(\Delta_{/[1]},\Span_{\Delta}(\c{C}^\times))$ is equivalent to a functor
\[
    \bar{\eta}:\Tw(\Delta_{/[1]} \times \Delta^{n})\times_{\Delta}\Delta^{\amalg} \simeq  \Tw(\Delta^{n}) \times (\Tw(\Delta_{/[1]}) \times_{\Delta}\Delta^{\amalg}) \xrightarrow{} \c{C}.
\]
We put $\Theta_{n+1}:= \Tw(\Delta_{/[1]} \times \Delta^{n})\times_{\Delta}\Delta^{\amalg} \simeq \Tw(\Delta^{n})\times \Theta_{1}$. The objects of this category are given by
\[
    [i,j]\subset (g_{p}:[p]\xrightarrow{} [1],k) \xrightarrow{f} (g_{q}:[q]\xrightarrow{} [1],l),
\]
where $0 \leq k\leq l\leq n$, $f:g_{p} \xrightarrow{} g_{q}$ is a morphism in $\Delta_{/[n]}$ and $[i,j]$ is a subinterval of $[p]$. 
\begin{definition}\label{def:Segalcone}
    Let $\Bar{\eta}:\Tw(\Delta^{n}\times \Delta_{/[1]})\times_{\Delta}\Delta^{\amalg} \xrightarrow{}\c{C}$ be a morphism of simplicial sets, $\sigma:\Delta^{k} \xrightarrow{} \Delta^{n}\times \Delta_{/[1]}$ a $k$-simplex
     \[
        \begin{xy}
            \xymatrix{(g_{n_{0}},i_{0}) \ar[r]^-{\phi_{1}}& ... \ar[r]^-{\phi_{k}} & (g_{n_{k}},i_{k})  }
           
        \end{xy},
     \]
    and $[i,j]\subset [n_{0}]$ a subinterval. The \emph{Segal cone} of $\Bar{\eta}$ associated to $\sigma$ and $[i,j]$ is the diagram
    \begin{align*}\label{diag:Segalcone}
    \centering
\begin{tikzpicture}[scale=0.85]
\node[scale=0.9] (1) at (0.25,2)  {$\Bar{\eta}(\psi_{n},[i,j],i_{0}<i_{k})$};
\node[scale=0.9] (2) at (-4,1) {$\Bar{\eta}(\phi_{1},[i,j],i_{0}<i_{1})$};
\node[scale=0.9] (3) at (5,1) {$\Bar{\eta}(\phi_{k},[\psi_{k-1}(i),\psi_{k-1}(j)],i_{k-1}<i_{k})$};
\node[scale=0.9] (5) at (3,0) {$\Bar{\eta}(g_{n_{k-1}},[\psi_{k-1}(i),\psi_{k-1}(j)],i_{k-1})$} ;
\node[scale=0.9] (6) at (8,0) {$\Bar{\eta}(g_{n_{k}},[\psi_{k}(i),\psi_{k}(j)],i_{k})$} ;
\node[scale=0.9] (7) at (-2,0) {$\Bar{\eta}(g_{n_{1}},[\psi(i),\psi(j)],i_{1})$} ;
\node[scale=0.9] (8) at (-6,0)  {$\Bar{\eta}(g_{n_{0}},[i,j],i_{0})$};
\node[scale=0.9] (9) at (0.25,1) {$\cdots$};  
\draw[->] (1) to (2) ;
\draw[->] (1) to (3);
\draw[->] (3) to (5);
\draw[->] (3) to (6);
\draw[->] (2) to (7);
\draw[->] (2) to (8);
\draw[draw=none] (-.5*\textwidth,0) -- (.5*\textwidth,0);
\end{tikzpicture}
\end{align*}
    where we put $\psi_{i} := \phi_{i} \circ \phi_{i-1} \circ ... \circ \phi_{1}$.
\end{definition}

\begin{prop}\label{prop:Segalcone}
    Let $\Bar{\eta}: \Tw(\Delta^{n}\times \Delta_{/[1]})\times_{\Delta}\Delta^{\amalg} \xrightarrow{}\c{C}$ be a morphism of simplicial sets. Its adjoint $\eta$ defines an $n$-morphism in $\Fun_{\Delta}(\Delta_{/[1]},\Span_{\Delta}(\c{C}^{\times}))$ if and only if for every $k$-simplex
    \[
        \begin{xy}
            \xymatrix{([n_{0}],i_{0})\ar[dr] \ar[r]^-{\phi_{1}}& ... \ar[r]^-{\phi_{k}} & ([n_{k}],i_{k}) \ar[dl] \\
            &  [1] & }
        \end{xy}
     \]
    in $\Delta_{/[1]}\times \Delta^{n}$ and every interval $[i,j] \subset [n_{0}]$ the associated Segal cone of $\Bar{\eta}$ is a limit diagram in $\c{C}$.
\end{prop}
\begin{proof}
    By definition of $\Span_{\Delta}(\cC^{\times})$, we need to show that for every $k$-simplex $\Delta^{k} \xrightarrow{} \Delta^{n} \times \Delta_{/[1]}$  given by
    \[
        (g_{n_{0}},i_{0}) \xrightarrow{} (g_{n_{1}},i_{1}) \xrightarrow{} ... \xrightarrow{} (g_{n_{k}},i_{k}),y
    \]
    the restriction of $\Bar{\eta}$ to $\Tw(\Delta^{k}) \subset \Tw(\Delta_{/[1]}\times \Delta^{n})$ is a Segal simplex in $\c{C}^{\times}$.
    According to \cite[Lem.10.2.13]{dyckerhoff2019higher} there exists a functor
    \[
        \Tilde{H}_{t}:(\Delta^{1}\times \Tw(\Delta^{k})) \times_{\Delta} \Delta^{\amalg} \xrightarrow{} \c{C}
    \]
    adjoint to a homotopy
    \[
        H_{t}:(\Delta^{1}\times \Tw(\Delta^{k})) \xrightarrow{} \c{C}^{\times}
    \]
    such that $H_{1}= \Bar{\eta}$, the components of this homotopy are Cartesian morphisms and $H_{0}$ has image contained in $\c{C}^{\times}_{g_{0}}$. Consequently, the restriction of $\Bar{\eta}$ to the Segal cone\footnote{See Construction~\ref{constr:span}} is a relative limit in $\c{C}^{\times}$ if and only if the restriction of $H_{0}$ to the Segal cone is a limit in $\c{C}^{\times}_{g_{0}}$ \cite[Prop. 4.3.1.9]{lurie2006higher}. 

    We can check the latter for each interval $[i,j] \subset [n_{0}]$ individually. The restriction of $\Tilde{H}_{0}$ to the Segal cone is represented in $\c{C}$ by the diagram
\[
\begin{tikzpicture}[scale=0.85]
\node[scale=0.9] (1) at (0.25,2)  {$\Tilde{H}_{0}(\psi_{n},[i,j],i_{0}<i_{n})$};
\node[scale=0.9] (2) at (-4,1) {$\Tilde{H}_{0}(\phi_{1},[i,j],i_{0}<i_{1})$};
\node[scale=0.9] (3) at (5,1) {$\Tilde{H}_{0}(\phi_{k}, [i,j],i_{k-1}<i_{k})$};
\node[scale=0.9] (5) at (3.25,0) {$\Tilde{H}_{0}(g_{n_{k-1}},[\psi_{k-1}(i),\psi_{k-1}(j)],i_{k-1})  $} ;
\node[scale=0.9] (6) at (8,0) {$\Tilde{H}_{0}(g_{n_{k}},[\psi_{k}(i),\psi_{k}(j)], i_{k})$} ;
\node[scale=0.9] (7) at (-2,0) {$\Tilde{H}_{0}(g_{n_{1}},[\psi_{1}(i),\psi_{1}(j)],i_{1})$} ;
\node[scale=0.9] (8) at (-6,0)  {$\Tilde{H}_{0}(g_{n_{0}},[i,j],i_{0})$};
\node[scale=0.9] (9) at (0.25,1) {$\cdots$};  
\draw[->] (1) to (2) ;
\draw[->] (1) to (3);
\draw[->] (3) to (5);
\draw[->] (3) to (6);
\draw[->] (2) to (7);
\draw[->] (2) to (8);
\end{tikzpicture}
\]
    Since the components of the homotopy are Cartesian, it restricts to an equivalence between this diagram and the Segal cone diagram of $\Bar{\eta}$ associated to $\sigma$ and $[i,j]$. Therefore the simplex is a Segal simplex, if for all simplices $\sigma$ and all subintervals $[i,j]$, the Segal cone diagram of $\Bar{\eta}$ associated to $\sigma$ and $[i,j]$ are limit diagrams.
\end{proof}
\noindent In fact, it turns out to be sufficient to check this condition for $2$-simplices:
\begin{lemma}\label{lem:segalandlimits}
    Let $\c{C}$ be an $\infty$-category with finite limits. For an $n$-simplex 
    \[
    F:\Delta^{n} \xrightarrow{} \Fun_{\Delta}(\Delta_{/[1]},\overline{\Span_{\Delta}}(\c{C}^{\times}))     \]
    the following are equivalent:
    \begin{itemize}
        \item[(1)] $F$ restricts to a functor $F:\Delta^{n} \xrightarrow{} \Fun_{\Delta}(\Delta_{/[1]},\Span_{\Delta}(\c{C}^{\times}))$.
        \item[(2)] for every non-degenerate $2$-simplex $\Delta^{2}\xrightarrow{} \Delta_{/[1]} \times \Delta^{n}$ the restriction $F|_{\Delta^{2}}$ is a Segal simplex.
    \end{itemize}
\end{lemma}
\begin{proof}
    This follows from the iterated application of \cite[Prop.4.2.3.8]{lurie2006higher}.
\end{proof}
\noindent To study the first condition, recall that a morphism $\phi:\Delta^{1} \xrightarrow{} \Delta_{/[1]}$ is called inert if its image in $\Delta$ is inert. If we depict a morphism in $\Delta_{/[1]}$ by a commutative diagram
\[
\begin{tikzcd}
    {[n_{0}]} \arrow[rr,"f"] \arrow[dr] & & {[n_{1}]} \arrow[dl] \\
    & {[1]} &
\end{tikzcd}
\]
then the morphism is inert when $f$ is the inclusion of a subinterval.
Let $\phi:\Delta^{1} \xrightarrow{} \Delta_{/[1]}$ be an inert morphism and $F$ a bicomodule object in $\Span_{\Delta}(\c{C}^{\times})$. By definition $F:\Delta_{/[1]} \xrightarrow{} \Span_{\Delta}(\c{C}^{\times})$ has to map inert morphisms to Cartesian morphisms. Unraveling the definition of the Cartesian fibration $\Span_{\Delta}(\c{C}^{\times})$, it follows that the image of $\phi$ is Cartesian, if and only if the adjoint map $\widetilde{F\circ\phi}:\Tw(\Delta^{1})\xrightarrow{} \c{C}^{\times}$ maps all morphisms in $\Tw(\Delta^{1})$ to Cartesian morphisms. In other words for any morphism $\phi:g^{0}_{n_{0}} \xrightarrow{} g^{1}_{n_{1}}$ and any interval $[i,j] \subset [n_{0}]$ 
\begin{itemize}
    \item[(1)] the morphism $F(\phi,\{i,j\})\xrightarrow{} F(g^{0}_{n_{0}}, \{i,j\})$ induced by the source map $\phi \xrightarrow{} g_{n_{0}}^{0}$ in $\Tw(\Delta_{/[1]})$ is an equivalence.
    \item[(2)] the morphism $F(\phi,\{i,j\})\xrightarrow{} F(g^{1}_{n_{1}}, \{\phi(i),\phi(j)\})$ induced by the target map $\phi \xrightarrow{} g_{n_{1}}^{1}$ in $\Tw(\Delta_{/[1]})$ is an equivalence.
\end{itemize}

\noindent We derive some consequences of this:

\begin{lemma}
    Suppose $F:\Delta_{/[1]}\xrightarrow{} \Span_{\Delta}(\c{C}^{\times})$ represents a bicomodule. Let $f:g^{0}_{n_{0}}\xrightarrow{} g^{1}_{n_{1}}$ be a morphism in $\Delta_{/[1]}$ viewed as an object in $\Tw(\Delta_{/[1]})$
    \begin{itemize}
        \item[$(1)$] Denote for $[i,j]\subset [n_{0}]$ by $f|_{[i,j]}$ the restriction of $f$ to $g^{0}_{[i,j]}\subset g^{0}_{n_{0}}$. Then the induced morphism
        \[
            F(f|_{[i,j]},[i,j]) \xrightarrow{} F(f,[i,j])
        \]
        is an equivalence.
        \item[$(2)$] Let $\Tilde{f}:g_{n_{0}}^{0}\xrightarrow{}g_{[l,k]}^{1}$ be a morphism with $[l,...,k]\subset [n_{1}]$, such that the composite with the inert morphism $\phi:g_{[l,k]}^{1} \hookrightarrow g_{n_{1}}^{1}$ is $f$. Then the induced morphism
        \[
            F(f,[l,k])\xrightarrow{} F(\Tilde{f},[l,k])
        \]
        is an equivalence.
    \end{itemize}
\end{lemma}
\begin{proof}
    The proof is analogous to \cite[Prop.2.2]{stern20212}
\end{proof}

\begin{lemma}
    Suppose $F:\Delta_{/[1]}\xrightarrow{} \Span_{\Delta}(\c{C}^{\times})$ represents a bicomodule. Let $\sigma$ be a morphism in the $\infty$-category $\Tw(\Delta_{/[1]})\times_{\Delta}\Delta^{\amalg}$ of the form:
        \[
            \sigma =
                \begin{Bmatrix}
                    \begin{xy}
                \xymatrix{[i,j] \subset g_{n_{0}}^{0} \ar[r]^-{g}\ar[d]_-{f}&  g_{n_{1}}^{1} \\
                [\Tilde{i},\Tilde{j}] \subset g_{m_{0}}^{0} \ar[r]_-{\Tilde{g}} & g_{m_{1}}^{1} \ar[u]_-{\Tilde{f}} \\
                }
            \end{xy}
            \end{Bmatrix}
        \]
        such that f restricts to an isomorphism $[i,...,j] \xrightarrow{} [\Tilde{i},...,\Tilde{j}]$ and $\Tilde{f}$ restricts to an isomorphism 
        \[
        [g(i),...,g(j)] \xrightarrow{} [\Tilde{g}(\Tilde{i}),...,\Tilde{g}(\Tilde{j})].
        \]
        Then $F$ sends $\sigma$ to an equivalence.
\end{lemma}
\begin{proof}
    We decompose $\sigma$ into the diagram
    \[
    \begin{tikzpicture}[auto]
        \node (A) at (0,0) {$g^{0}_{[i,j]}$};
        \node (B) at (0,-1.5) {$g^{0}_{n_{0}}$};
        \node (C) at (0,-3) {$g^{0}_{m_{0}}$};
        \node (D) at (1.5,0) {$g^{1}_{n_{1}}$};
        \node (E) at (1.5,-1.5) {$g^{1}_{m_{1}}$};
        \node (F) at (1.5,-3) {$g^{1}_{m_{1}}$};

        \draw[->](A) to node[scale=0.7,swap]{$\phi$} (B);
        \draw[->](B) to node[scale=0.7,swap]{$f$} (C);
        \draw[->](E) to node[scale=0.7,swap]{$\id$} (D);
        \draw[->](F) to node[scale=0.7,swap]{$\bar{f}$} (E);
        \draw[->](A) to node[scale=0.7]{$g_{[i,j]}$} (D);
        \draw[->](B) to node[scale=0.7]{$g$} (E);
        \draw[->](C) to node[scale=0.7,swap]{$\bar{g}$} (F);  
    \end{tikzpicture}
    \]
    Note that the upper square gets mapped by $F$ to an equivalence and that the composite of the left vertical maps is inert. So by $2$-out-of-$3$ we can restrict to those $\sigma$, such that the morphism $f$ is inert. Such a diagram $\sigma$ can be further decomposed as:
    \[
    \begin{tikzpicture}[auto]
        \node (A) at (0,0) {$g^{0}_{[i,j]}$};
        \node (B) at (0,-1.5) {$g^{0}_{[i,j]}$};
        \node (C) at (0,-3) {$g^{0}_{m_{0}}$};
        \node (D) at (1.5,0) {$g^{1}_{n_{1}}$};
        \node (E) at (1.5,-1.5) {$g^{1}_{m_{1}}$};
        \node (F) at (1.5,-3) {$g^{1}_{m_{1}}$};

        \draw[->](A) to node[scale=0.7,swap]{$\id$} (B);
        \draw[->](B) to node[scale=0.7,swap]{$f$} (C);
        \draw[->](E) to node[scale=0.7,swap]{$\bar{f}$} (D);
        \draw[->](F) to node[scale=0.7,swap]{$\id$} (E);
        \draw[->](A) to node[scale=0.7]{$g_{[i,j]}$} (D);
        \draw[->](B) to node[scale=0.7]{$\bar{g}\circ f$} (E);
        \draw[->](C) to node[scale=0.7,swap]{$\bar{g}$} (F);
    \end{tikzpicture}
    \]
    Using the same argument as above applied to the lower square, we can reduce to the case that $f= \id$. So let $\sigma$ be a diagram of the form

\[
    \begin{tikzpicture}[auto]
        \node (A) at (0,0) {$g^{0}_{n_{0}}$};
        \node (B) at (0,-1.5) {$g^{0}_{n_{0}}$};
        \node (D) at (1.5,0) {$g^{1}_{n_{1}}$};
        \node (E) at (1.5,-1.5) {$g^{1}_{m_{1}}$};
    
        \draw[->](A) to node[scale=0.7,swap]{$\id$} (B);
        \draw[->](E) to node[scale=0.7,swap]{$\bar{f}$} (D);
        \draw[->](A) to node[scale=0.7,]{$g_{[i,j]}$} (D);
        \draw[->](B) to node[scale=0.7,swap]{$\bar{g}$} (E);
    \end{tikzpicture}
    \]
    such that $\bar{f}$ induces an isomorphism $[\bar{g}(0),\bar{g}(n_{0})] \xrightarrow{} [g(0),g(n_{0})]$. In particular, there exists a decomposition
    \[
    \begin{tikzpicture}[auto]
        \node (A) at (0,0) {$g^{0}_{n_{0}}$};
        \node (B) at (0,-1.5) {$g^{0}_{n_{0}}$};
        \node (C) at (0,-3) {$g^{0}_{n_{0}}$};
        \node (D) at (1.5,0) {$g^{1}_{n_{1}}$};
        \node (E) at (1.5,-1.5) {$g^{1}_{m_{1}}$};
        \node (F) at (1.5,-3) {$g^{1}_{[\bar{g}(0),\bar{g}(n_{0})]}$};

        \draw[->](A) to node[scale=0.7,swap]{$\id$} (B);
        \draw[->](B) to node[scale=0.7,swap]{$\id$} (C);
        \draw[->](E) to node[scale=0.7,swap]{$\bar{f}$} (D);
        \draw[->](F) to node[scale=0.7,swap]{$\psi$} (E);
        \draw[->](A) to node[scale=0.7]{$g$} (D);
        \draw[->](B) to node[scale=0.7]{$\bar{g}$} (E);
        \draw[->](C) to node[scale=0.7,swap]{$\bar{g}$} (F);
    \end{tikzpicture}
    \]
    with $\psi$ inert. It follows by assumption that $f\circ \psi$ is itself inert. Hence the claim follows from $2$-out-of-$3$ again.
\end{proof}

\noindent Putting these results together, we have proven Proposition~\ref{cor:condit}:
\begin{prop}
    A functor $F:\Theta_{1} \xrightarrow{} \c{C}$ defines a bicomodule object if and only if
    \begin{itemize}
        \item[(1)] $F$ sends degenerate intervals to terminal objects. 
        \item[(2)] $F$ sends every object $(\phi:g_{[n_{0}]}^{0} \xrightarrow{} g_{[n_{1}]}^{1}, [i,j])$ together with its projection to subintervals to a product diagram.\footnote{See Construction~\ref{constr:cart}}
        
        \item[(3)] F sends morphisms of the form
        \[
            \sigma \simeq 
                \begin{Bmatrix}
                    \begin{xy}
                \xymatrix{[i,j] \subset g_{n_{0}}^{0} \ar[r]^-{g}\ar[d]^-{f}&  g_{n_{1}}^{1} \\
                [\Tilde{i},\Tilde{j}] \subset g_{m_{0}}^{0} \ar[r]^-{\Tilde{g}} & g_{m_{1}}^{1} \ar[u]^-{\Tilde{f}} \\
                }
            \end{xy}
            \end{Bmatrix}
        \]
        s.t. the morphism f restricts to an isomorphism $\{i,...,j\} \xrightarrow{} \{\Tilde{i},...,\Tilde{j}\}$ and the morphism $\Tilde{f}$ restricts to an isomorphism $\{g(i),...,g(j)\} \xrightarrow{} \{\Tilde{g}(\Tilde{i}),...,\Tilde{g}(\Tilde{j})\}$ to equivalences. We denote by $E$ the wide subcategory of $\Theta_{1}$ with morphisms of the above form.
        \item[(4)] $F$ maps all Segal cone diagrams from Definition~\ref{def:Segalcone} to limit diagrams.
    \end{itemize}
\end{prop}

\subsection{Proof of Proposition~\ref{prop:localization}}\label{subsect:appendix2}
The goal of this subsection is to prove the following:
\begin{prop}
    The functor $\mathcal{L}:\Omega_{1}\xrightarrow{} \Delta^{\star}_{1}$ from Construction~\ref{constr:L} is an $\infty$-categorical localization at the morphisms $E$, defined in Corollary~\ref{cor:condit} (3).
\end{prop}
\noindent To prove this, we show that this functor satisfies the assumptions of \cite[Lem.3.1.1]{walde20212}. To this end, we need to consider for any object $N=(e_{m_{0}}^{0},...,e_{m_{k-1}}^{k-1})\in \Delta^{\star}_{1}$  the strict fiber of the functor $\mathcal{L}$ at $N$. We denote the strict fiber by $\Omega_{1,N}$ and we denote by $\Omega_{1,N}^{E}$ the subcategory of $\Omega_{1,N}$ with morphisms in $E$. To apply \cite[Lem.3.1.1]{walde20212}, we first construct an explicit initial object in $\Omega_{1,N}^{E}$:
\begin{construction}
    For any $N=(e_{m_{0}}^{0},...,e_{m_{k-1}}^{k-1})\in \Delta^{\star}_{1}$ as above we define the object $I_{N}\in \Omega_{1,N}$ as
\[
\triangle[{[0,k] \subset[k]}]{[m_{0}] \ast ... \ast [m_{k-1}]}[{[1]}]{f_{N}}[{e_{k}}]{e_{m_{0}}^{0} \ast ... \ast e_{m_{k-1}}^{k-1}}
\]
    where the morphism $f_{N}$ is defined by
    \[
       f_{N}(i)= \begin{cases}
           0 \in [m_{i}] \quad i\leq k \\
           m_{k-1}\in [m_{k-1}] \quad i=k
       \end{cases} ,
    \]
    and $e_{k}$ is the unique morphism that makes the triangle commute. If $e_{m_{0}}^{0} \ast ... \ast e_{m_{k-1}}^{k-1}$ is supported at $\{0\}$, we denote by $I_{N} \ast \id_{[1]}$ the object
    \[
    \begin{tikzcd}
        {[0,k] \subset[k]} \arrow[dr] \arrow[rr, "f_{N}"] & & {[m_{0}] \ast ... \ast [m_{k-1}]\ast [1]} \arrow[dl, "e_{m_{0}}^{0} \ast ... \ast e_{m_{k-1}}^{k-1}\ast \id_{[1]}"] \\
        & {[1]} &
    \end{tikzcd}
\]
Similarly, we denote by $\id_{[1]}\ast I_{N}$ the analogously defined object if $e_{m_{0}}^{0} \ast ... \ast e_{m_{k-1}}^{k-1}$ is supported at $\{1\}$.
\end{construction}

\begin{lemma}\label{lem:initial}
    For every $N$ the category $\Omega_{1,N}^{E}$ has an initial object given by
    \begin{itemize}
        \item[(1)] $I_{N}\ast \id_{[1]}$ if $e_{m_{0}}^{0} \ast ... \ast e_{m_{k-1}}^{k-1}$ is supported at $0$.
        \item[(2)] $I_{N}$ if $e_{m_{0}}^{0} \ast ... \ast e_{m_{k-1}}^{k-1}$ is surjective.
        \item[(3)] $\id_{[1]}\ast I_{N}$ if $e_{m_{0}}^{0} \ast ... \ast e_{m_{k-1}}^{k-1}$ is supported at $1$.
    \end{itemize}
\end{lemma}
\begin{proof}
     We will prove that the respective object is initial. Let
    \[
            [i,j] \subset [n] \xrightarrow{\alpha} [l] \xrightarrow{h} [1]
    \]
    be an object in $\Omega_{1,N}^{E}$. We will construct a unique morphism $\Phi$ in $\Omega_{1,N}^{E}$ from the claimed initial object. In any of the above cases, we define the morphism $\phi:[k]\xrightarrow{}[n]$ as the inert map that includes $[k]$ as the interval $\{i,...,j\}$. This is uniquely defined since every morphism in $E$ has to induce an isomorphism between the chosen subintervals. We can decompose\footnote{See Construction~\ref{constr:decomp}}  the object $[l]\xrightarrow{h} [1]$ into 
    \[
       [l_{left}]\ast \{\alpha(i),...,\alpha(j)\} \ast [l_{right}] \xrightarrow{h_{left}\ast h \ast h_{right}} [1].   
    \]
    Since any morphism in $E$ has to map $\{\alpha(i),...,\alpha(j)\} $ isomorphically onto $[f_{N}(0),f_{N}(k)] = [n]$, the restriction of $\Phi$ to $\{\alpha(i),...,\alpha(j)\}$ is uniquely determined. We show that our assumptions allow us to extend this arrow uniquely to the outer part. In case $(2)$, we can uniquely extend. We only discuss case $(1)$, since $(3)$ is analogous. In case $(1)$, we can uniquely extend $\Phi$ to $[l_{left}]$ by setting it constantly $0$. An extension to $[l_{right}]$ is equivalent to a morphism to $\id_{[1]}$ in $\Delta_{/[1]}$. This morphism exists uniquely since $\id_{[1]}$ is a final object of $\Delta_{/[1]}$.
\end{proof}
\noindent We furthermore need to show that the inclusion $\Omega^{E}_{1,N}\subset \Omega^{1}_{/N}$ of the strict into the lax fiber is cofinal. To show this, we make the following preliminary considerations. By Quillen's theorem A \cite[Thm.4.1.3.1]{lurie2006higher} this amounts to showing, that for every object 
\[
            \{i,j\} \subset [n] \xrightarrow{\alpha} [l] \xrightarrow{h} [1],
\]
whose image under $\mathcal{L}$ is given by $(h_{l_{i+1}},...,h_{l_{j}})$, and every morphism
\[
    g:(h_{l_{i+1}},...,h_{l_{j}}) \xrightarrow{} (e_{m_{0}},...,e_{m_{k-1}}) 
\]
in $\Delta^{\star}_{1}$, the category $\Omega_{N,1}^{E} \times_{(\Omega_{1})_{/N}} ((\Omega_{1})_{/N})_{g/}$ is contractible. We do so by constructing an explicit initial object. By definition, the morphism $g$ amounts to a pair consisting of a morphism $\gamma:[k-1] \xrightarrow{} \{i+1,...,j\}$ in $\Delta$ and a morphism 
\[
\Bar{g}:[m_{0}]\ast... \ast [m_{k-1}] \xrightarrow{} [l_{i+1}]\ast ...\ast[l_{j}] \text{ in } \Delta_{/[1]}.
\] 
We denote by $[n_{c}]:= \{p,...,q\}\subset \{i,j\} \subset [n]$ the unique linearly ordered set such that $\gamma$ factors as
\[
   \gamma: [k-1] \xrightarrow{} [n_{c}] \hookrightarrow [n_{l}] \ast [n_{c}] \ast [n_{r}]. 
\]
By construction $\Bar{g}$ has image contained in $[l_{c}] := [l_{p+1}]\ast... \ast [l_{q}]$. We introduce the following decompositions:
\begin{align*}
    [l] &= [l_{l}] \ast [l_{c}] \ast [l_{r}], \\
    [m] &= [m_{l}] \ast [m_{c}] \ast [m_{r}].
\end{align*}

\begin{lemma}\label{lem:contr}
    There exists a morphism in $\Omega_{1}$
    \[
        \begin{xy}
            \xymatrix{[p,q] \subset d_{[p,...,q]} \ar[r] \ar[d] & h_{l_{c}} \\
            [0,k] \subset h_{\{0,l^{1}\}} \ast e_{k} \ast h_{\{0,l^{2}\}} \ar[r] &  h_{l^{1}} \ast e_{m} \ast h_{l^{2}} \ar[u]
            }
        \end{xy}
    \]
    that extends to a morphism
    \[
        \mu_{Z,N} := \begin{Bmatrix}
          \begin{xy}
            \xymatrix{Z = [i,j] \subset d_{n_{l}} \ast d_{n_{c}} \ast d_{n_{r}} \ar[r] \ar[d] & h_{l_{l}} \ast h_{l_{c}} \ast h_{l_{r}} \\
            Z_{N} = [0,k] \subset d_{n_{l}} \ast h_{\{0,l^{1}\}} \ast e_{k} \ast h_{\{0,l^{2}\}} \ast d_{n_{r}} \ar[r] & h_{l_{l}}\ast h_{l^{1}} \ast e_{m} \ast h_{l^{2}} \ast h_{l_{r}} \ar[u]
            }
        \end{xy}
         \end{Bmatrix}
     \]
    covering $g$, such that $\mu_{Z,N}$ is an initial object in $\Omega_{N,1}^{E} \times_{(\Omega_{1})_{/N}} ((\Omega_{1})_{/N})_{g/}$.
\end{lemma}
\begin{proof}
    We define a morphism $\nu:\{p+1,...,q-1\} \xrightarrow{} [k]$ as $\mathbb{I}(\gamma)$ and extend it to a morphism
    \[
    \nu: \{p,...,q\} \xrightarrow{} [1]\ast[k] \ast [1]
    \]
    by mapping endpoints to endpoints. We decompose $[l_{c}] = [l^{1}]\ast [l_{c}^{m}] \ast [l^{2}]$, where we denote by $[l^{m}_{c}]$ the subinterval of $[l]$, that contains the image of $\Bar{g}$. By construction, the map $\Bar{g}:e_{m}\xrightarrow{} h_{[l_{c}^{m}]}$ hits both endpoints. Therefore, we can extend it to a morphism
    \[
        \Bar{g}':= \id_{h_{l^{1}}} \ast g \ast id_{h_{l^{2}}}: h_{l^{1}} \ast e_{m} \ast h_{l^{2}} \xrightarrow{} h_{l^{c}}.
    \]
    By construction $\Bar{g}'$ also hits both endpoints and extends uniquely to a morphism over $[1]$. Similarly, we define
    \[
        \Bar{f}_{N}:h_{\{0,l^{1}\}} \ast e_{k} \ast h_{\{0,l^{2}\}} \xrightarrow{} h_{l^{1}} \ast e_{m} \ast h_{l^{2}}
    \]
    to be $f_{N}$ on $e_{k}$ and to send endpoints to endpoints. This morphism also extends uniquely to a morphism over $[1]$.  By decomposing the morphisms $\nu$, $h$ and $\Bar{f}_{N} \circ \nu$, we obtain a not necessarily commuting diagram
    \[
        \begin{xy}
            \xymatrix{[p,q] \subset d_{[1_{p+1}]}\ast ... \ast d_{[1_{p}]} \ar[r]^-{\alpha} \ar[d]^-{\nu} & h_{l_{p+1}}\ast... \ast h_{l_{q}} \\
            [0,k] \subset h_{\{0,l^{1}\}} \ast e_{k_{p+1}} \ast... \ast e_{k_{q}}\ast h_{\{0,l^{2}\}} \ar[r]_-{\Bar{f}_{N}} & h_{l^{1}} \ast e_{m_{p+1}} \ast ... \ast e_{m_{q}} \ast h_{l^{2}} \ar[u]^-{\Bar{g}'}
            }
        \end{xy}
    \]
    This diagram commutes in $\Delta_{/[1]}$ if and only if it commutes restricted to each individual $[1_{r}]$ with $r \in \{p+1 \leq q\}$. Since all maps preserve endpoints, this is clear for $r=p+1$ and $r=q$. In the other cases, it suffices to show that $\Bar{g}'$ sends the endpoints of $[n_{r}]$ to the endpoints of $[m_{r}]$. This follows since $\Bar{g}'$ is induced by a morphism $g$ in $\Delta^{\star}_{1}$.

    Since all morphisms preserve endpoints, we can extend the diagram by taking star products with the morphisms $\id_{d_{n_{l}}}$, $\id_{d_{n_{r}}}$, $\id_{h_{l_{l}}}$, $\id_{h_{l_{r}}}$, $\alpha|_{d_{n_{l}}}:d_{n_{l}} \xrightarrow{} d_{l_{l}}$ and $\alpha|_{d_{n_{r}}}:d_{n_{r}} \xrightarrow{} d_{l_{r}}$
    \[
        \begin{xy}
            \xymatrix{Z = \{i,j\} \subset d_{n_{l}} \ast d_{n_{c}} \ast d_{n_{r}} \ar[r] \ar[d] & h_{l_{l}} \ast h_{l_{c}} \ast h_{l_{r}} \\
            Z_{N} = \{0,k\} \subset d_{n_{l}} \ast h_{\{0,l^{1}\}} \ast e_{k} \ast h_{\{0,l^{2}\}} \ast d_{n_{r}} \ar[r] & h_{l_{l}}\ast h_{l^{1}} \ast e_{m} \ast h_{l^{2}} \ast h_{l_{r}} \ar[u]
            }
        \end{xy}
    \]
    By construction, this diagram defines a morphism in $\Omega_{1}$ covering $g$. We call this morphism $\mu_{Z,N}$. 

   Let us show that $\mu_{Z,N}$ is initial. Suppose, we are given another morphism in $\Omega_{1}$
    \[
        \begin{xy}
            \xymatrix{Z = [i,j] \subset d_{n} \ar[r]^-{\alpha} \ar[d]^{\rho} & h_{l} \\
            X= [0,k] \subset x_{a} \ar[r]_-{\beta} &  y_{b} \ar[u]^-{w} 
            }
        \end{xy}
    \]
    covering $g$. We can decompose it as follows
    \begin{equation}\label{diag:square1}
    \begin{gathered}
        \begin{xy}
            \xymatrix{Z = [i,j] \subset d_{n_{l}} \ast d_{n_{c}} \ast d_{n_{r}} \ar[r]^-{\alpha} \ar[d]^{\rho} & h_{l_{l}} \ast h_{l_{c}} \ast h_{m_{r}} \\
            X = [0,k] \subset x_{a_{l}} \ast x_{a_{c}} \ast x_{a_{r}} \ar[r]_-{\beta} & y_{b_{l}} \ast y_{b_{c}} \ast y_{b_{r}} \ar[u]^{w} \\   
            }
        \end{xy}
    \end{gathered}
    \end{equation}
    It follows from \cite[Lem. A.3]{stern20212} that $\rho$ is uniquely determined except at the endpoints by $\gamma$. Therefore, the restriction of diagram Diagram~\eqref{diag:square1} to $d_{n_{c}}$ looks as follows:
    \[
        \begin{xy}
            \xymatrix{d_{n_{c}} \ar[r]^{h} \ar[d]^-{\rho} & h_{l_{c}} \\
            x_{a_{c}^{1}}\ast e_{k} \ast x_{a^{2}_{c}} \ar[r]^-{\beta_{c}^{1} \ast f_{N} \ast \beta_{c}^{2}} & y_{b_{c}^{1}}\ast e_{n} \ast y_{b_{c}^{2}} \ar[u]^-{w}\\
            }
        \end{xy}
    \]
    Every morphism $Z_{N} \xrightarrow{} X$ in $E$ commuting with the above morphism $Z \xrightarrow{} X$ and $\mu_{Z,M}$, must in particular restrict to a commutative diagram
    \begin{equation}\label{diag:2}
    \begin{gathered}
        \begin{xy}
            \xymatrix{d_{n_{c}}\ar[r] \ar[d] \ar[ddr] & h_{l_{c}} & \\
            h_{\{0,l^{1}\}} \ast e_{k} \ast h_{\{0,l^{2}\}} \ar[r]|\hole \ar[dr] & h_{l^{1}} \ast e_{m} \ast h_{l^{2}} \ar[u] & \\
            & x_{a_{c}^{1}} \ast e_{k} \ast x_{a_{2}^{c}} \ar[r] & y_{b_{c}^{1}} \ast e_{m} \ast y_{b_{c}^{2}} \ar[ul] \ar[uul] \\
            }
        \end{xy}
    \end{gathered}
    \end{equation}
    Moreover, since the bottom square is in $E$, it must restrict to a commutative diagram
    \[
        \begin{xy}
            \xymatrix{e_{k} \ar[r] \ar[d] & e_{m} \\
            e_{k} \ar[r] & e_{m} \ar[u]}
        \end{xy}
        \]
    It follows that the component $h_{\{0,l^{1}\}} \ast e_{k} \ast h_{\{0,l^{2}\}} \xrightarrow{} x_{a_{c}^{1}} \ast e_{k} \ast x_{a_{c}^{2}}$ is uniquely determined by the left hand triangle in Diagram~\ref{diag:2}. We can further decompose $w$, since it must restrict to $\Bar{g}$ on $e_{m}$, into
    \[
        w= w^{1}\ast g \ast w^{2}: y_{b_{c}^{1}} \ast e_{m} \ast y_{b_{c}^{2}} \xrightarrow{} h_{l^{1}} \ast h_{l_{c}^{m}} \ast h_{l^{2}}
    \]
    Hence, the component of the bottom morphism in the right hand triangle is uniquely determined and must be given by
    \[
        w^{1} \ast\id_{[m]} \ast w^{2}: y_{b_{c}^{1}} \ast e_{m} \ast y_{b^{2}_{c}} \xrightarrow{} h_{l^{1}} \ast e_{m} \ast h_{l^{2}}.
    \]
    We can now extend back to the full diagram
    \[
         \begin{xy}
            \xymatrix{d_{n_{l}}\ast d_{n_{c}} \ast d_{n_{r}} \ar[r] \ar[d] \ar[ddr] & h_{l_{l}} \ast h_{l_{c}} \ast h_{l_{r}} & \\
            d_{n_{l}} \ast h_{\{0,l^{1}\}} \ast e_{k} \ast h_{\{0,l^{1}\}} \ast d_{n_{r}} \ar[r]|\hole \ar[dr] & h_{l_{l}} \ast h_{l^{1}} \ast e_{m} \ast h_{l^{2}} \ast h_{l_{r}} \ar[u] & \\
            & x_{a_{l}} \ast x_{a_{c}^{1}} \ast e_{k} \ast x_{a_{2}^{c}} \ast x_{a_{r}} \ar[r] & y_{b_{l}} \ast y_{b_{c}^{1}} \ast e_{m} \ast y_{b_{c}^{2}} \ast y_{b_{r}} \ar[ul] \ar[uul] \\
            }
        \end{xy}
       \]
    Note that the vertical arrows in the back square are given by identities when restricted to $h_{l_{l}}$, $h_{l_{r}}$, $e_{m_{l}}$, and $e_{m_{r}}$. Hence, the bottom square is uniquely determined by the morphisms $h_{l_{l}} \xrightarrow{} x_{a_{l}}$, $h_{l_{r}} \xrightarrow{} x_{a_{r}}$, $y_{b_{l}} \xrightarrow{} e_{m_{l}}$, and $y_{b_{r}} \xrightarrow{} e_{m_{r}}$. So there is a unique morphism $Z_{N} \xrightarrow{} X$ with the desired properties. 
\end{proof}

\begin{proof}[Proof of Proposition~\ref{prop:localization}]
    The result follows from combining Lemma~\ref{lem:contr} and Lemma~\ref{lem:initial} with \cite[Lem.3.1.1.]{walde20212}.
\end{proof}

\bibliographystyle{alpha}
\bibliography{main.bib}

\end{document}